\newcommand{\hidden}[1]{}
\newcommand{\R}{{\Bbb R}}
\newcommand{\N}{{\Bbb N}}
\newcommand{\C}{{\Bbb C}}
\newcommand{\D}{{\Bbb D}}
\newtheorem{question}{Question}
\newtheorem{theorem}{Theorem}[section]
\newtheorem{lemma}[theorem]{Lemma}
\newtheorem{proposition}[theorem]{Proposition}
\newtheorem{definition}[theorem]{Definition}
\newtheorem{TAA}{Theorem}
\theoremstyle{remark}
\newtheorem*{remark}{Remark}
\newtheorem*{example}{Example}
\numberwithin{equation}{section}
\newcommand{\cR}{\mathcal{R}}
\begin{document}

 \author{Ayesha Bennett}

\bigskip

\date{}

\begin{abstract}
We investigate the shrinking target and recurrence set associated to non-autonomous measure-preserving systems on compact metric spaces, establishing zero-one criteria in the spirit of classical Borel-Cantelli results.

Our first main theorem gives a quantitative shrinking target result for non-autonomous systems under a uniform mixing condition, providing asymptotics with an optimal error term. This general result is applicable to certain families of inner functions, yielding concrete applications such as patterns of zeros in the multibase expansion.
Turning to recurrence, we establish new zero-measure laws for non-autonomous systems. In the autonomous case, we prove a zero-one criterion for recurrence sets of centred, one-component inner functions via Markov partitions and distortion estimates. Together, these results provide a unified framework for shrinking target and recurrence problems in both autonomous and non-autonomous dynamics.
\end{abstract}

\title{ The shrinking target and recurrence problem for  non-autonomous systems }
 \maketitle
\section{Introduction}
Let $(X, d)$ be a compact metric space equipped with a probability measure $\mu$ and associated Borel $\sigma$-algebra $\mathcal{B}$. 
Given a sequence of measure-preserving transformations $T_\N:=\{t_n\}_{n \in \N}$, we define
$ T_n := t_n \circ t_{n-1} \circ \dots \circ t_1
$ with $n \in \N$ and we write $(X,\mathcal{B},\mu,T_\N,d)$ for the associated measure-preserving, non-autonomous system. In the case when $t_n=t$ for all $n \in \N$,  we regain  the `standard' notion of a measure-preserving dynamical system, $(X,\mathcal{B},\mu,t,d)$. Throughout, we will only consider measure-preserving systems, and will therefore omit this descriptor.
Let $\{r_n\}_{n \in \N}$ be a positive, real sequence. We are interested in the following two sets:
\begin{definition}
Given a sequence of balls $\{B(x_n,r_n)\}_{n \in \N}$ contained in $X$ with centres $x_n \in X$, 
we let
\begin{eqnarray}\label{defshrink}
  \mathcal{W}(T_\N,\{x_n, r_n\}) :=  \left\{  x \in X :  T_{n}(x) \in B(x_n,r_n) \hbox{ for infinitely many }n\in \mathbb{N} \right\} \, ,
\end{eqnarray}
and refer to $\mathcal{W}(T_\N,\{x_n, r_n\})$ as the shrinking target set associated to $T_\N$.
\end{definition} 
\begin{definition}
    We let
\begin{eqnarray}\label{defrecc}
  \cR( T_\N,\{r_n\} ) := \left\{  x \in X :  T_{n}(x) \in B(x,r_n) \hbox{ for infinitely many }n\in \mathbb{N} \right\} \, , 
\end{eqnarray}
and refer to $\cR( T_\N,\{r_n\} )$ as the recurrence set associated to $T_\N$.
\end{definition}

Consider the most basic case, when we set $T_\N=\{t\}$, so we have an \emph{autonomous} system, and $r_n=c$ for all $n$  in \eqref{defshrink} and \eqref{defrecc}. By Poincar\'{e} Recurrence, the recurrence set has full $\mu$-measure. If $t$ is ergodic, then by an application of the Ergodic Theorem, the shrinking target set has full $\mu$-measure. 
 In view of this, it is natural to ask what happens to the $\mu$-measure of the sets $\mathcal{W}(t,\{x_n, r_n\})$ and $\cR(t,\{r_n\})$ if, instead of being constant, $r_n \to 0$ as $n \to \infty$. Investigating this question has led to a rich theory in the autonomous setting.   In particular, under the additional assumption  that $\mu$ is $\delta$-Ahlfors regular ( defined by \eqref{defahl}), one seeks zero-one criteria for $\mathcal{W}(t,\{x_n, r_n\})$ and $\mathcal{R}(t,\{r_n\})$  based on the convergence of $\sum_{n=1}^\infty r_n^\delta$. The sets associated with \eqref{defshrink} and \eqref{defrecc} can be expressed as $\limsup$ sets, and the sought after criteria are in line with standard Borel-Cantelli statements from probability theory -- see Section~\ref{bc} for more discussion.

The concept of shrinking targets was first introduced in \cite{hv} by Hill-Velani as a far-reaching `dynamical' generalisation of the classical framework of metric Diophantine approximation. They established bounds on the Hausdorff dimension of the Julia set of an expanding rational map, and thereby laying the foundation for the study of shrinking target sets within the context of the Borel-Cantelli Lemmas. 
Subsequent developments have focused on two main directions. The first is to determine under what conditions a measure-preserving system satisfies a zero–one criterion (see \cite{BERFS, FMP, kim2007, LLVZ} and references within). The second, arising when the shrinking target set has measure zero, is to establish bounds on its Hausdorff dimension (see \cite{baker2024, li2014} and references within). In this paper we concentrate on the first problem. Of particular note is \cite{FMP} in which Fern\'{a}ndez-Meli\'{a}n-Pestana prove an asymptotic statement for measure of the shrinking target set $\mathcal{W}(t, \{x_0, r_n\})$ associated to an autonomous system satisfying a uniform mixing condition. From this, they derive a zero-one criterion based on the convergence of $\sum_{n=1}^\infty \mu(B(x_0,r_n))$. This was strengthened to a quantitative result with an essentially optimal error term in \cite{LLVZ} by exploiting a quantitative form of the Borel-Cantelli Lemma (see  Section~\ref{bc}, Lemma~\ref{ebc}).

In contrast, study of the \emph{non-autonomous} shrinking target problem is  far less complete. Even when $r_n = c$, and $t_n$ ergodic for all $n$, full measure of $\mathcal{W}(T_\N,\{x_n, r_n\})$ and $\mathcal{R}(T_\N,\{r_n\})$ need not hold, since compositions of measure-preserving maps can degrade ergodic properties--see Section~\ref{autnonaut} for discussion. Nevertheless, recent work has begun to address such problems. 
In \cite{simurb}, Fishman-Mance-Simmons-Urba\'{n}ski study the Cantor series expansion of points in $[0,1]$, which we define later before  stating Theorem~\ref{ABcf}. They give bounds on the Hausdorff dimension of the associated non-autonomous shrinking target set. Subsequently, in the same context, Sun \cite[Theorem~1.1]{Sun}  provides a zero-one criteria for the set.

 In complex dynamics, attention has turned to shrinking target problems for non-autonomous systems of inner functions. These functions are holomorphic self-maps of the unit disk that extend Lebesgue almost everywhere to maps of the boundary (see Section~\ref{preres}, Definition~\ref{definner}), and are closed under composition \cite[Theorem~2]{ryff}.    
They provide a natural framework for bounded holomorphic dynamics as every bounded holomorphic map admits a canonical outer–inner function factorisation.
A further motivation of their study comes from investigating entire maps with simply connected wandering domains of entire functions. Here, under iteration, the domain `wanders' through $\hat\C$, and repeated application of the Riemann Mapping Theorem transfers the dynamics to the closed unit disk, inducing a non-autonomous system of inner functions.
This perspective underlies works such as \cite{BERFS}, in which the authors establish a zero-one criterion as well as a variety of zero and full measure laws in this non-autonomous inner function setting. Here, we use the term zero (or full) measure law to refer to a result that provides sufficient conditions under which the set of interest has zero (or full) measure.

Motivated by these developments, we prove a general quantitative shrinking target zero–one criterion, with explicit error terms for non-autonomous dynamical systems satisfying the following `uniform mixing' condition.

\begin{definition}\label{ABunimix}
 Let  $(X,\mathcal{B},\mu,T_{ \N},d)$ be a dynamical system and let $\{x_n\}_{n \in \N}$ be a sequence of points in $X$. We say that $T_\N$ is uniformly mixing at $\{x_n\}$ if there exists a  real, positive function $\phi: \N \to \R^+$ with $ \sum_{n=1}^{\infty} \phi(n) < \infty$ such that for each $n \in \N$
 \\
 \begin{equation}\label{unimixing1}
\left|\frac{\mu(B\cap (t_n \circ \dots \circ t_m)^{-1}(A))}{\mu(A)}-\mu(B)\right| \,  \le  \,  \phi(n-m-1) \, 
\end{equation}
\\
 for all $1 \le m<n$, all balls $B \subset X$ with centre at $x_n$ and all $A \in \mathcal{B}$ with $\mu(A)>0$. We say that $T_\N$ is uniformly mixing if it is uniformly mixing at any given sequence $\{x_n\}$ in $X$.
\end{definition}
\begin{remark}\label{fmpmix}
Setting $t_n=t$ and $x_n=x_0$ for all $n$, we recover the notion of uniform mixing at a point introduced in \cite[Definition~2]{FMP}. If we set $\phi(n)=\gamma^n $, for $0<\gamma<1$, then $T_\N$ is exponentially mixing in the usual sense. 
\end{remark}

 With Definition~\ref{ABunimix} in hand, we state our quantitative shrinking target theorem for non-autonomous systems. 

\begin{TAA}\label{ABthA}
Let $(X,\mathcal{B},\mu,T_\N,d)$ be a dynamical system. Let $\{B(x_n,r_n)\}_{n \in \N}$ be a sequence of balls in $X$ with centres $x_n$ and decreasing radii $r_n$. Suppose that $T_\N$ is uniformly mixing at $\{x_n\}_{n \in \N}$.
Then, for any given $\varepsilon>0$, we have that for $\mu$-almost every $x \in X$
\begin{eqnarray}\label{asymt}
   ~\hspace{4.5mm} \# \big\{ 1\le n \le   N :    T_{n}(x) \in B(x_n,r_n) \big\} =\Phi(N)+O\Bigl(\Phi^{1/2}(N) \ (\log\Phi(N))^{3/2+\varepsilon}\Bigr) \, ,
\end{eqnarray}
 where $\Phi(N):=\sum\limits_{n=1}^N \mu (B(x_n,r_n)) $.
In particular,
 \begin{eqnarray*}
   \mu\big( \mathcal{W}(T_\N,\{x_n, r_n\}) \big) = \begin{cases}
        0 &\text{ \ if}\ \  \sum\limits_{n=1}^\infty \mu(B(x_n,r_n)) <\infty \, , \\[2ex]
        1 &\text{ \ if}\ \  \sum\limits_{n=1}^\infty\mu(B(x_n,r_n))
        =\infty \, .
    \end{cases}
	\end{eqnarray*}
 
\end{TAA}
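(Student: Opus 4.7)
The plan is to set $E_n:=T_n^{-1}(B(x_n,r_n))$ and invoke the quantitative Borel--Cantelli lemma of Section~\ref{bc} (Lemma~\ref{ebc}), which converts any quasi-independence bound of the shape $\sum_{1\le m,n\le N}\mu(E_m\cap E_n)=\Phi(N)^2+O(\Phi(N))$ directly into the asymptotic counting formula \eqref{asymt}. The task therefore reduces to establishing such a second-moment estimate. Measure preservation of each $T_n$ gives $\mu(E_n)=\mu(B(x_n,r_n))$, so $\sum_{n\le N}\mu(E_n)=\Phi(N)$; the classical first Borel--Cantelli lemma then immediately disposes of the convergent half of the dichotomy.

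For the second-moment estimate, factor $T_n=S_{m,n}\circ T_m$ with $S_{m,n}:=t_n\circ\cdots\circ t_{m+1}$. Measure preservation of $T_m$ yields $\mu(E_m\cap E_n)=\mu\bigl(B(x_m,r_m)\cap S_{m,n}^{-1}(B(x_n,r_n))\bigr)$. Applying Definition~\ref{ABunimix}, with the admissible ball chosen to have its centre at the relevant index in the sequence $\{x_n\}$ and the other ball playing the role of the measurable set $A$, produces the decoupling estimate $\bigl|\mu(E_m\cap E_n)-\mu(E_m)\mu(E_n)\bigr|\le \phi(n-m-2)\,\mu(B(x_n,r_n))$ whenever $n\ge m+2$; the boundary case $n=m+1$ is controlled trivially by $\mu(E_m\cap E_n)\le\mu(B(x_n,r_n))$. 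Summing over $1\le m<n\le N$ and using the summability of $\phi$ delivers $\sum_{m<n\le N}\bigl|\mu(E_m\cap E_n)-\mu(E_m)\mu(E_n)\bigr|=O(\Phi(N))$, which, combined with the diagonal contribution $\sum_n\mu(E_n\cap E_n)=\Phi(N)$, is precisely the hypothesis required by Lemma~\ref{ebc}.

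Feeding this into Lemma~\ref{ebc} produces \eqref{asymt} for $\mu$-almost every $x$. In the divergent case $\Phi(N)\to\infty$, the asymptotic forces the counting function to tend to infinity on a set of full $\mu$-measure, so $\mu(\mathcal{W}(T_\N,\{x_n,r_n\}))=1$; the convergent case has already been handled in the first paragraph. The main obstacle lies in the second-moment step: Definition~\ref{ABunimix} ties the centre of the admissible ball to a specific index within the composition, so some bookkeeping is required to match the pair $\bigl(B(x_m,r_m),\,S_{m,n}^{-1}(B(x_n,r_n))\bigr)$ to the form $\bigl(B,\,f^{-1}(A)\bigr)$ that appears in the mixing inequality, and to keep the index shift in $\phi$ consistent under the double summation. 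Once the quasi-independence is secured, everything else is a routine invocation of well-established Borel--Cantelli machinery.
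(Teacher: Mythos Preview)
Your proposal is correct and follows essentially the same route as the paper: set $E_n=T_n^{-1}(B(x_n,r_n))$, use measure preservation of $T_m$ to rewrite $\mu(E_m\cap E_n)$ as $\mu\bigl(B(x_m,r_m)\cap S_{m,n}^{-1}(B(x_n,r_n))\bigr)$, apply the uniform mixing inequality to decouple, and feed the resulting second-moment bound into Lemma~\ref{ebc}. Your treatment is in fact slightly more careful than the paper's in two places: you track the index shift in $\phi$ correctly (obtaining $\phi(n-m-2)$ rather than the paper's looser $\phi(n-m)$) and you isolate the boundary case $n=m+1$, which falls outside the range $m<n$ in Definition~\ref{ABunimix} and so must be bounded trivially.
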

Recall that $\mu$ is $\delta$-Ahlfors regular if there exist absolute constants $c_1,c_2>0$ such that for any ball $B(x,r)$ with $x \in X$, \begin{eqnarray}\label{defahl}
    c_1r^\delta \leq \mu(B(x,r)) \leq c_2r^\delta \, .
\end{eqnarray}
Equivalently, we write $\mu(B(x,r)) \asymp r^\delta$.
Note that if $\mu$ is $\delta$-Ahlfors regular then the above criterion is determined by the summability of $\sum_{n=1}^\infty r_n^\delta$.

 Theorem~\ref{ABthA} is a quantitative version of \cite[Theorem~1]{FMP}, and a generalisation of \cite[Theorem~1]{LLVZ} to the non-autonomous setting.
As a concrete application, we specialise Theorem~\ref{ABthA} to a system  of inner functions. Throughout, $\D$ will denote the open unit disk in $\C$, $\partial \D$ its boundary (i.e. the unit circle) and $\lambda$ is normalised Lebesgue measure. Consider a sequence of centred, uniformly contracting inner functions $F_\N := \{f_n\}_{n \in \N}$, meaning $f_n(0) = 0$ and $|f_n'(0)| \le \alpha < 1$ for all $n \in \N$.  
Then, we will show in Section~\ref{preres} that $(\partial\D, \mathcal{B}, \lambda, F_\N, d)$ is an exponentially mixing system, where $d$ is normalised arc length.
\begin{TAA}\label{ABthB}
Let $F_\N $ be a sequence of centred, uniformly contracting inner functions. Let  $\{B(z_n,r_n)\}_{n \in \N}$ be a sequence of balls in $\partial \D$ with centres $z_n$ and decreasing radii $r_n$.
Then, for any given $\varepsilon>0$, we have
\begin{equation*} \# \big\{ 1\le n \le   N :  F_n(z) \in B(z_n,r_n)\big\}  =\Phi(N)+O\left(\Phi^{1/2}(N) \ (\log\Phi(N))^{3/2+\varepsilon}\right)
\end{equation*}
\noindent for $\lambda$-almost every $z\in \partial\D$, where
$\Phi(N):=\sum\limits_{n=1}^N r_n$.
In particular,
 \begin{eqnarray*}
   \lambda \big( \mathcal{W}(F_\N,\{z_n, r_n\}) \big) = \begin{cases}
        0 &\text{ \ if}\ \  \sum\limits_{n=1}^\infty r_n <\infty \, ,\\[2ex]
        1 &\text{ \ if}\ \  \sum\limits_{n=1}^\infty r_n
        =\infty \, .
    \end{cases}
	\end{eqnarray*}
 
\end{TAA}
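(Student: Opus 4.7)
The plan is to reduce Theorem~\ref{ABthB} to Theorem~\ref{ABthA} by verifying the uniform mixing condition of Definition~\ref{ABunimix} for $(\partial\D,\mathcal{B},\lambda,F_\N,d)$, with an exponentially summable rate function $\phi$.

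First, I would record the classical fact that a centred inner function $f$ preserves normalised Lebesgue measure on $\partial\D$: if $h$ is bounded harmonic on $\D$, then $h\circ f$ is also bounded harmonic, so $\int (h\circ f)\, d\lambda = (h\circ f)(0) = h(f(0)) = h(0) = \int h\, d\lambda$. Thus $f_\ast\lambda=\lambda$. Consequently each $F_n=f_n\circ\cdots\circ f_1$ is again centred inner and preserves $\lambda$, so the data really do form a measure-preserving non-autonomous system. By the Schwarz lemma and the chain rule applied at the fixed point $0$, $|(f_n\circ\cdots\circ f_{m+1})'(0)| = \prod_{j=m+1}^{n}|f_j'(0)| \le \alpha^{n-m}$.

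The main step is the quantitative mixing estimate
\begin{equation*}
\left|\frac{\lambda\bigl(B \cap g_{m,n}^{-1}(A)\bigr)}{\lambda(A)} - \lambda(B)\right| \le C\,\alpha^{n-m},
\end{equation*}
where $g_{m,n}:=f_n\circ\cdots\circ f_{m+1}$, to hold uniformly over arcs $B\subset\partial\D$ and Borel sets $A$ with $\lambda(A)>0$. My approach is via the Perron--Frobenius operator $P_{g_{m,n}}$: the adjoint identity $\int_{B}\chi_A\circ g_{m,n}\,d\lambda = \int (P_{g_{m,n}}\chi_B)\,\chi_A\,d\lambda$ reduces the task to an $L^\infty$ bound $\|P_{g_{m,n}}\chi_B - \lambda(B)\|_\infty \le C\alpha^{n-m}$. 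For this I would use a Lasota--Yorke type inequality for centred inner functions of the form $\|P_h\psi\|_{BV}\le|h'(0)|\,\|\psi\|_{BV}+C\|\psi\|_{L^1}$, combined with the bounded variation of $\chi_B$ and the standard $BV\hookrightarrow L^\infty$ embedding on $\partial\D$, to convert the contraction at $0$ into pointwise equidistribution of the transferred indicator.

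Given this, $\phi(k)=C\alpha^{k+1}$ is summable and Theorem~\ref{ABthA} applies directly. Since $\lambda$ is $1$-Ahlfors regular on $\partial\D$, $\lambda(B(z_n,r_n))=r_n$, so $\Phi(N)=\sum_{n=1}^{N}r_n$, and both the asymptotic formula and the zero--one dichotomy claimed in Theorem~\ref{ABthB} follow at once.

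The principal obstacle I anticipate is establishing the uniform-in-$B$ $L^\infty$ mixing bound with an exponential rate genuinely controlled by $|g_{m,n}'(0)|$. Because the Koopman operator on $L^2(\partial\D)$ is a norm-one isometry whenever $g$ is centred inner, no soft $L^2$ spectral-gap argument can supply the decay; one must exploit the analytic structure of inner functions (equivalently, the smallness of the Poisson kernel at a point with $|g'(0)|$-sized argument shift) together with the BV regularity of characteristic functions of arcs, and this is where the bulk of the technical work lies.
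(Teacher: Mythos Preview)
Your overall plan coincides with the paper's: verify that $(\partial\D,\mathcal{B},\lambda,F_\N,d)$ satisfies the uniform mixing hypothesis of Definition~\ref{ABunimix} with an exponentially summable $\phi$, and then invoke Theorem~\ref{ABthA} together with $1$-Ahlfors regularity of $\lambda$. The paper's proof is two lines long because it does not attempt to \emph{derive} the mixing estimate at all: it simply quotes Pommerenke's result, recorded in the paper as Proposition~\ref{expmixcomp}, which is precisely the inequality
\[
\left|\frac{\lambda\bigl(B\cap F_n^{-1}E\bigr)}{\lambda(E)}-\lambda(B)\right|\le K e^{-n\tau}
\]
for arcs $B$ and measurable $E$, with $\tau=(1-\alpha)/84$. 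The composition $f_n\circ\cdots\circ f_{m+1}$ is again a centred inner function with derivative at $0$ bounded by $\alpha^{n-m}$, so the same proposition applied to that composition gives \eqref{unimixing1} with $\phi(k)=Ke^{-k\tau}$. That is the entire argument.

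Your proposed alternative---deriving the mixing bound yourself via a Lasota--Yorke inequality $\|P_h\psi\|_{BV}\le|h'(0)|\|\psi\|_{BV}+C\|\psi\|_{L^1}$---is plausible in spirit but not how Pommerenke's theorem is proved, and it is not clear it goes through as sketched. A general centred inner function may have infinite degree (infinite Blaschke product times a singular factor), so $P_h$ is an infinite sum over inverse branches and a BV bound requires controlling that sum; there is no off-the-shelf Lasota--Yorke inequality for this class. Pommerenke's argument is instead complex-analytic, going through Poisson-kernel and harmonic-measure estimates that directly exploit $|h'(0)|$. You correctly identify this as the hard step; the point is that it is already done in the literature and the paper treats it as a black box.
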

The ``In particular'' part of Theorem~\ref{ABthB} coincides with a recent result  of  Benini et al [3, Theorem C]. It is worth highlighting, that although not stated, their  proof  actually gives the stronger counting statement.    

As an application of Theorem~\ref{ABthB}, we prove a zero-one criterion on the occurrence of arbitrarily long string of zeros in the multibase or `Cantor series' expansion of points in the unit interval.
This expansion is the unique expansion of $x \in [0,1)$ with respect to the sequence of bases $\{b_i\}_{i \in \N}$ with $b_i \in \N_{\geq 2}$ such that
$$
x = \sum_{i=1}^{\infty} \frac{d_i}{b_1\dots b_i} \, , \  d_i \in \{0,\dots,b_i-1\} 
$$
where the digits $d_i \neq b_i -1$ for all $i \geq i_0\in \N$.
We will denote this expansion by $x= [d_1,..,d_n,\dots]$ and with this notation in place, we can state our theorem. 

\begin{TAA}\label{ABcf}
Let $\{b_i\}_{i \in \N}$ be a sequence of bases with $b_i \in \N_{\geq 2}$, and $\xi: \N \to \N$ be an increasing function. Then, for any given $\varepsilon>0$, we have that for almost every $x=[d_1,\dots,d_n,\dots] \in [0,1)$,
$$ \# \big\{ 1\le n \le   N :   d_{n+1}=\dots=d_{n+\xi(n)}=0 \big\} =\Phi(N)+O\left(\Phi^{1/2}(N) \ (\log\Phi(N))^{3/2+\varepsilon}\right) \, ,
$$  where $\Phi(N):=\sum\limits_{n=1}^N \frac{1}{b_{n+1}\dots b_{n+1+\xi(n)}}$. 
In particular, 
\begin{eqnarray*}
   \lambda \big(D(\xi)\big) = \begin{cases}
        0 &\text{ \ if}\ \  \sum\limits_{n=1}^\infty \frac{1}{b_n\dots b_{n+1+\xi(n)}} <\infty \, ,\\[2ex]
        1 &\text{ \ if}\ \ \sum\limits_{n=1}^\infty \frac{1}{b_n\dots b_{n+1+\xi(n)}}
        =\infty \, ,
    \end{cases}
	\end{eqnarray*}
    where $D(\xi)$ is $x=[d_1,\dots,d_n]\in [0,1)$ for which $d_{n+1}=d_{n+2}=\dots=d_{n+\xi(n)}=0$ 
        for infinitely many $n \in \N $.
\end{TAA}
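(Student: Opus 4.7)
The plan is to realise the Cantor series shift as a non-autonomous inner function system on $\partial\D$ and then invoke Theorem~\ref{ABthB}. For each $n \in \N$ I set $f_n(z) := z^{b_n}$; since $b_n \geq 2$, this is a finite Blaschke product, hence an inner function, with $f_n(0)=0$ and $f_n'(0)=0$. Consequently $F_\N := \{f_n\}_{n \in \N}$ is a sequence of centred, uniformly contracting inner functions (trivially, with $\alpha = 0$), so Theorem~\ref{ABthB} is applicable to any sequence of balls on $\partial\D$.

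Next, I transfer the digit dynamics to the unit circle through the covering map $\phi : [0,1) \to \partial\D$, $\phi(x) := e^{2\pi i x}$. A direct computation gives $f_n \circ \phi = \phi \circ t_n$, where $t_n(x) = b_n x \bmod 1$, and iterating yields $F_n \circ \phi = \phi \circ T_n$ with $T_n(x) = b_1 b_2 \cdots b_n \cdot x \bmod 1$. Since $\phi$ pushes Lebesgue measure on $[0,1)$ forward to normalised arc length on $\partial\D$, measure-theoretic statements transfer freely between the two settings, and the null set of $x$ with non-unique Cantor expansion may be discarded.

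The core remaining step is to recast the digit event as a ball-preimage. Because $T_n$ acts as the Cantor-series shift, sending $x = [d_1, d_2, \ldots]$ to $T_n(x) = [d_{n+1}, d_{n+2}, \ldots]$ (with respect to the shifted base sequence $\{b_{n+j}\}_{j \geq 1}$), the condition $d_{n+1} = \cdots = d_{n+\xi(n)} = 0$ is equivalent to $T_n(x) \in [0, \rho_n)$ for a radius $\rho_n$ determined by the relevant base product. Under $\phi$, this interval maps to an arc of $\partial\D$ based at $1$; choosing $z_n$ to be its midpoint realises the arc as a ball $B(z_n, r_n)$ in the normalised arc-length metric, and the radii $r_n$ are strictly decreasing because $\xi$ is increasing and each $b_i \geq 2$. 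Applying Theorem~\ref{ABthB} to $\{B(z_n, r_n)\}$ then produces the counting asymptotic with the series of the statement as leading term (up to absolute constants absorbed into the error), and the zero--one dichotomy follows immediately from summability or divergence.

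The main obstacle I anticipate is not conceptual but careful bookkeeping in the last step: verifying that the digit event agrees, up to a $\lambda$-null set, with the preimage under $F_n$ of a \emph{centred} ball on $\partial\D$, and reconciling the arc-length normalisation with the precise summand appearing in $\Phi(N)$. Once the identification $F_n \leftrightarrow T_n$ is fixed and the arcs are centred properly, everything reduces to a routine appeal to Theorem~\ref{ABthB}.
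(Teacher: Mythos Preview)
Your approach is essentially identical to the paper's: both realise the Cantor-series shift via the inner functions $g_i(z)=z^{b_i}$, transfer between $[0,1)$ and $\partial\D$ through $\phi(x)=e^{2\pi i x}$, translate the digit condition into membership in a shrinking arc, and invoke Theorem~\ref{ABthB}. The paper in fact proves the slightly more general Theorem~\ref{ABcf1} (an arbitrary target digit string coming from a point $x_0$) and then specialises to $x_0=0$, using a fixed centre rather than your moving midpoints $z_n$; this is a cosmetic variant of the same argument, and the bookkeeping you flag is exactly what the paper handles in its proof of Theorem~\ref{ABcf1}.
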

In \cite{mance}, Mance provides an asymptotic result pertaining to the typicality of normal numbers in the multibase expansion for sequence of bases tending to infinity. More precisely, he quantifies the frequency of blocks (not necessarily zeros) of fixed length in such expansions. Therefore, the more `interesting’ case of Theorem~\ref{ABcf} is when the function $\xi$ is unbounded, since, in this case, the  result concerns the occurrence of arbitrarily long sequences of zeros in the multibase expansion. We discuss this further following the proof of Theorem~\ref{ABcf} in Section~2.6. Note that on specialising to $b_i=b$ for all $i$, we return to  the autonomous case, where our result concerns the base
$b$ expansion of points  $x\in[0,1)$. In this case, we can readily strengthen the conclusion of \cite[Theorem~5]{FMP},  a related result concerning the frequencies of arbitrarily long strings of digits (not necessarily zeros) in the base $b$ expansion, by upgrading it to an asymptotic statement with optimal error.
 \medskip
 
We now turn our attention to the recurrence set $\cR(T_\N,\{r_n\})$. Recall that for (measure-preserving) autonomous systems $(X,\mathcal{B},\mu,t,d)$, Poincar\'e Recurrence guarantees $\mu(\cR(t,\{r_n\}))=1$ when $\{r_n\}_{n \in \N}$ is a constant sequence. In the case $r_n \to 0$, in pioneering work, Boshernitzan \cite{boshernitzan1993} established full measure results for $\cR(t,\{r_n\})$ under mild additional assumptions on $X$, beyond mere measure-preservation.
Complete zero–one criteria, however, have only been obtained for autonomous dynamical systems that, at minimum, are uniformly mixing (see Definition~\ref{ABunimix}) and admit a `regular' partition (see Section~\ref{Markov}). Such systems  include those considered within \cite{baker2021}, \cite{chang2019}, \cite{He}, \cite{kleinbock2023} and, particularly relevant to this paper, \cite{HeLi} in which He-Liao considered piecewise expanding maps with a finite partition, and \cite{HLSW}, in which Hussain-Li-Simmons-Wang consider maps with a countable partition and regularity properties.
 The study of non-autonomous recurrence, however, remains virtually unexplored.
 We remedy this gap with the following result, which provides a zero measure criteria for the recurrence set of a non-autonomous system.
\begin{TAA}\label{ABconv}
Let $(X,\mathcal{B},\mu,T_{\N}, d)$ be a compositional system, and suppose that $\mu$ is a $\delta$-Ahlfors regular probability measure, and $T_\N$ is uniformly mixing with respect to $\mu$.  Let $\{r_n\}_{n\in \N} $ be a sequence of positive, real numbers. Then 
 if  $\sum_{n=1}^\infty r_n^{\delta} <\infty $,  then 
$$
		\mu(\mathcal{R} (T_\N,\{r_n\} ))=
			0 \, .
$$
\end{TAA}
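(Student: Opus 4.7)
The plan is to express $\cR(T_\N,\{r_n\}) = \limsup_n E_n$ where $E_n := \{x \in X : T_n(x) \in B(x,r_n)\}$, and invoke the convergence direction of the Borel--Cantelli Lemma; the task thus reduces to showing $\sum_n \mu(E_n) < \infty$.

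The obstacle to bounding $\mu(E_n)$ directly from the mixing hypothesis is that the target $B(x,r_n)$ moves with $x$. The standard workaround is a covering argument. Since the summability hypothesis forces $r_n \to 0$, for all large $n$ I choose a maximal $r_n$-separated net $\{y^{(n)}_i\}_{i \in I_n} \subset X$; the balls $B(y^{(n)}_i, r_n)$ then cover $X$, the balls $B(y^{(n)}_i, r_n/2)$ are disjoint, $|I_n| \lesssim r_n^{-\delta}$ by Ahlfors regularity, and the doubled balls $B(y^{(n)}_i, 2r_n)$ overlap with multiplicity bounded uniformly in $n$. The triangle inequality yields the key inclusion
\[
E_n \;\subseteq\; \bigcup_{i \in I_n} B(y^{(n)}_i, r_n) \,\cap\, T_n^{-1}\!\big(B(y^{(n)}_i, 2r_n)\big),
\]
since $x \in B(y^{(n)}_i, r_n)$ together with $T_n(x) \in B(x, r_n)$ forces $T_n(x) \in B(y^{(n)}_i, 2r_n)$.

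Now apply Definition~\ref{ABunimix} with $m=1$ (so that $t_n \circ \cdots \circ t_1 = T_n$ and the mixing rate becomes $\phi(n-2)$), taking the underlying sequence of centres to place $y^{(n)}_i$ in slot $n$. Since $T_\N$ is uniformly mixing at \emph{every} sequence, this is legitimate and gives
\[
\mu\!\big(B(y^{(n)}_i, r_n) \cap T_n^{-1}(B(y^{(n)}_i, 2r_n))\big) \;\leq\; \mu(B(y^{(n)}_i, r_n))\,\mu(B(y^{(n)}_i, 2r_n)) \,+\, \phi(n-2)\,\mu(B(y^{(n)}_i, 2r_n)).
\]
Summing over $i$ and using $\mu(B(y^{(n)}_i, 2r_n)) \lesssim r_n^\delta$ together with the fact that $\sum_i \mu(B(y^{(n)}_i, r_n))$ and $\sum_i \mu(B(y^{(n)}_i, 2r_n))$ are bounded by constants independent of $n$ (by disjointness of the $r_n/2$-balls combined with doubling), one obtains $\mu(E_n) \lesssim r_n^\delta + \phi(n-2)$. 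Both series converge by hypothesis and by the summability built into the definition of uniform mixing, so Borel--Cantelli completes the proof.

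The main technical point, rather than a genuine obstacle, is bookkeeping around Definition~\ref{ABunimix}: since the definition insists that the ball be centred at the $n$-th entry of a prescribed sequence, one must lean on the \emph{uniform} clause—that the estimate holds at every admissible sequence—to freely slot in the $|I_n|$ cover points $y^{(n)}_i$ at the appropriate index. Beyond this, the argument is the natural non-autonomous adaptation of the autonomous convergence strategy employed in \cite{HeLi}, and, crucially, summability of $\phi$ alone suffices since the $\phi(n-2)$ contribution is already summable in $n$.
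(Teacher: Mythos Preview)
Your proof is correct and follows essentially the same route as the paper: a covering of $X$ at scale $r_n$ (you use a maximal separated net, the paper uses the $5B$-covering lemma), the triangle-inequality inclusion $A_n\cap B\subset B\cap T_n^{-1}(2B)$ (the paper's Lemma~\ref{trig}), and then uniform mixing plus Ahlfors regularity to obtain $\mu(A_n)\lesssim r_n^\delta+\phi(n)$, after which Borel--Cantelli finishes. The only difference is that the paper also records the matching lower bound, packaging both as the equivalence $\sum r_n^\delta<\infty\iff\sum\mu(A_n)<\infty$ (Proposition~\ref{maincon1iff}), since the reverse implication is reused for the divergence half of Theorem~\ref{ABrecocif}.
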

An immediate corollary of Theorem~\ref{ABconv} is the following  zero measure law for non-autonomous systems of inner functions. In this setting, to the best of our knowledge, there has not been any work on the recurrence set outside of when $\{r_n\}$ is a constant sequence, even for autonomous systems. 
\begin{TAA}\label{ABconvin}
Let $(\partial \D, \mathcal{B},\lambda, F_{\N},d)$ be a dynamical system with $F_\N$ a sequence of centred, uniformly contracting inner functions. Let $\{r_n\}_{n\in \N} $ be a sequence of positive, real numbers such that $\sum_{n=1}^\infty r_n <\infty $.
     Then 
$$ 
\lambda(\cR(F_\N,\{r_n\})) = 0 \, .$$
\end{TAA}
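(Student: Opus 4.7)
The plan is to derive Theorem~\ref{ABconvin} as a direct corollary of Theorem~\ref{ABconv}, essentially by checking that a system of centred, uniformly contracting inner functions satisfies every hypothesis of that general recurrence result with $\delta=1$.

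First, I would record that $(\partial\D,\mathcal{B},\lambda,F_\N,d)$ is a compositional system in the sense used in Theorem~\ref{ABconv}: by Ryff's theorem each $f_n$ extends to a Lebesgue measure-preserving self-map of $\partial\D$, and the class of inner functions is closed under composition, so $F_n = f_n\circ\cdots\circ f_1$ is a well-defined measure-preserving transformation of $(\partial\D,\lambda)$ for every $n$. Next, I would verify $1$-Ahlfors regularity of the measure. Since $d$ is normalised arc length on the unit circle, any ball $B(z,r)\subset\partial\D$ is simply an arc, and $\lambda(B(z,r)) = \min(r, 1) \asymp r$ for the relevant small radii, so $\lambda$ is $\delta$-Ahlfors regular with $\delta=1$ and the summability condition of Theorem~\ref{ABconv} reads $\sum_{n=1}^\infty r_n^{\delta}=\sum_{n=1}^\infty r_n<\infty$, matching the hypothesis of Theorem~\ref{ABconvin}.

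The third hypothesis---uniform mixing of $F_\N$ with respect to $\lambda$ in the sense of Definition~\ref{ABunimix}---is exactly what the author announces will be proved in Section~\ref{preres}. Concretely, the uniform contraction bound $|f_n'(0)|\le\alpha<1$ together with the centred condition $f_n(0)=0$ gives, via the Denjoy--Wolff / Schwarz-type estimates on inner functions, a quantitative decay of correlations between a ball on the target side and a preimage under $f_n\circ\cdots\circ f_{m+1}$, with rate $\phi(n-m-1)=C\alpha^{n-m-1}$. Since this $\phi$ is summable (geometric), the system is uniformly mixing at every sequence $\{x_n\}$ in the sense of Definition~\ref{ABunimix}, as required.

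With all three hypotheses in place, applying Theorem~\ref{ABconv} with $\delta=1$ and the given radii $\{r_n\}$ yields $\lambda(\mathcal{R}(F_\N,\{r_n\}))=0$ whenever $\sum_{n=1}^\infty r_n<\infty$, which is precisely the statement of Theorem~\ref{ABconvin}. There is no real obstacle in the corollary itself; the only non-cosmetic work is the uniform mixing estimate for inner functions, but this is delegated to Section~\ref{preres} and so can be invoked as a black box here. The proof therefore reduces to three lines of hypothesis-checking followed by the citation of Theorem~\ref{ABconv}.
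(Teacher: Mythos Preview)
Your proposal is correct and matches the paper's primary proof almost verbatim: the paper also derives Theorem~\ref{ABconvin} as an immediate corollary of Theorem~\ref{ABconv}, invoking Proposition~\ref{expmixcomp} for the mixing hypothesis and noting that normalised Lebesgue measure on $\partial\D$ is $1$-Ahlfors regular. (One cosmetic point: the actual mixing rate from Proposition~\ref{expmixcomp} is $Ke^{-n\tau}$ with $\tau=(1-\alpha)/84$, not $C\alpha^{n-m-1}$, but this is immaterial since either is summable.) The paper additionally supplies a second, more explicit proof that avoids the $5$B-covering lemma by tiling $\partial\D$ directly with arcs, but this is offered only because the intermediate estimates are reused later for Theorem~\ref{ABrecocif}.
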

It is natural to ask if either Theorem~\ref{ABconv} and Theorem~\ref{ABconvin} can be upgraded to a zero-one criterion for their respective non-autonomous systems. A result of Shen \cite[Theorem~1.2] {nonautrec} gives a zero-one criterion for the recurrence set associated to the Cantor series expansion of points in the unit interval gives a partial upgrade of Theorem~\ref{ABconvin} for maps of the form $z^{b_n}$. Proving a complete criterion likely requires defining a sufficiently regular partition associated to the sequence of (potentially infinite degree) maps that generate the non-autonomous systems in question.
\\
The following theorem provides a complete zero-one criterion for autonomous systems associated to a special class of functions called centred, one-component inner functions (see Definition~\ref{definner} in Section~\ref{preres}). We remark that this class of inner functions contains well known classes of functions, such as all centred, finite Blaschke products -- see Lemma~\ref{foc}.

\begin{TAA}\label{ABrecocif}
   Consider the dynamical system $(\partial \D, \mathcal{B},\lambda,f,d)$, where $f$ is a one-component, centred inner function. Let $\{r_n\}_{n \in \N}$ be a sequence of positive, real numbers. Then 
   \begin{eqnarray*}
   \lambda(\cR(f,\{r_n\}) )= \begin{cases}
        0 &\text{ \ if}\ \  \sum\limits_{n=1}^\infty r_n <\infty \,  ,\\[3ex]
        1 &\text{ \ if}\ \  \sum\limits_{n=1}^\infty r_n
        =\infty \, .
    \end{cases}
	\end{eqnarray*}
\end{TAA}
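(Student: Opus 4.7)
The convergence direction is an immediate specialisation of Theorem~\ref{ABconvin} to the constant sequence $F_\N = (f, f, \ldots)$: since $f$ is centred and non-trivial, the Schwarz--Pick lemma gives $|f'(0)| < 1$, so $F_\N$ is centred and uniformly contracting, and as $\lambda$ on $\partial\D$ is $1$-Ahlfors regular, the hypotheses of Theorem~\ref{ABconvin} are met. Thus $\sum r_n < \infty$ forces $\lambda(\cR(f, \{r_n\})) = 0$.

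For the divergence direction I would use the classical Borel--Cantelli with quasi-independence strategy, exploiting the Markov structure supplied by the one-component hypothesis. The first step is to construct a Markov partition: a finite or countable collection $\{P_1, \ldots, P_d\}$ of arcs partitioning $\partial\D$ (mod $\lambda$) such that $f$ maps each $P_i$ bijectively onto $\partial\D$, with the branches of $f^{-n}$ on the cylinders $P_w := \bigcap_{k=0}^{n-1} f^{-k}(P_{w_{k+1}})$ enjoying uniformly bounded distortion. The one-component property is used precisely here to guarantee that preimages of arcs remain connected and that distortion is controlled on all scales; centred finite Blaschke products are covered by Lemma~\ref{foc}.

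Set $E_n := \{x \in \partial\D : f^n(x) \in B(x, r_n)\}$, so $\cR(f, \{r_n\}) = \limsup_n E_n$. Using the generation-$n$ partition and bounded distortion, on each cylinder $P_w$ the restriction $f^n|_{P_w}$ is a bounded-distortion bijection onto $\partial\D$, so the preimage of any ball of radius $r_n$ has $\lambda$-measure $\asymp r_n \lambda(P_w)$; approximating the moving target $B(x, r_n)$ by a fixed-centre ball (refining further when $\lambda(P_w) \gg r_n$) gives $\lambda(E_n \cap P_w) \asymp r_n \lambda(P_w)$, and summing over $w$ yields $\lambda(E_n) \asymp r_n$. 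The main step is a quasi-independence estimate of the form
\begin{equation*}
\lambda(E_m \cap E_n) \;\le\; C \lambda(E_m) \lambda(E_n) + \varepsilon_{n-m} \lambda(E_m),
\end{equation*}
with $\varepsilon_k$ decaying fast enough that the quantitative Borel--Cantelli lemma (Lemma~\ref{ebc}) delivers $\lambda(\limsup E_n) = 1$. I would extract this from the exponential mixing of $f$: conditioning on the generation-$m$ cylinder $P_w \ni x$ and writing $y := f^m(x)$, the event $f^n(x) \in B(x, r_n)$ becomes $f^{n-m}(y) \in B(x, r_n)$ with $x$ nearly constant on $P_w$, after which mixing of $f^{n-m}$ decouples it from the event at time $m$. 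The principal obstacle lies here: the target $B(x, r_n)$ moves with the base point, so Theorem~\ref{ABthB} does not apply directly, and the Markov partition must simultaneously localise the target and keep the cumulative error from the moving-centre approximation below the main term.
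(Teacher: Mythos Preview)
Your outline follows the same strategy as the paper --- convergence via Theorem~\ref{ABconvin}, divergence via the Markov partition of Section~\ref{Markov} together with bounded distortion and exponential mixing to obtain a quasi-independence bound on $\lambda(E_m\cap E_n)$ --- and your identification of the ``principal obstacle'' (the moving centre) is exactly what forces the use of Lemma~\ref{boundball} and the two-case split in Proposition~\ref{measint}. One organisational difference: the paper treats the finite-degree case separately, identifying $f$ with a finite Blaschke product via Lemma~\ref{foc} and then quoting He--Liao (Theorem~\ref{hl}) directly rather than rerunning the Markov argument.

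There is, however, a genuine gap in your endgame. A bound of the shape
\[
\lambda(E_m\cap E_n)\le C\,\lambda(E_m)\lambda(E_n)+\varepsilon_{n-m}\lambda(E_m)
\]
with $C>1$ does \emph{not} feed into Lemma~\ref{ebc}: the variance condition~\eqref{ebc_condition1} requires the excess $\lambda(E_m\cap E_n)-\lambda(E_m)\lambda(E_n)$ to be controlled by a single sum $\sum\phi_n$, and the term $(C-1)\lambda(E_m)\lambda(E_n)$ contributes on the order of $(\sum r_n)^2$, which swamps this. Indeed the paper's Proposition~\ref{measint} only yields an estimate with a constant strictly larger than one in front of $r_mr_n$. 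What this \emph{does} give is the quasi-independence-on-average hypothesis~\eqref{qioa} of Lemma~\ref{dbc}, hence $\lambda(\limsup E_n)\ge C^{-1}>0$. The passage from positive to full measure is then a separate step: the paper invokes Pommerenke's zero--one law (Proposition~\ref{contractpom}), checking that $L=\cR(f,\{r_n\})$ satisfies $L=f^{-m}(L_m)$ for suitable shifted recurrence sets $L_m$. You need to supply this upgrade; Lemma~\ref{ebc} alone will not close the argument.
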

This proof is split into two cases, depending on if $f$ is of finite or infinite degree. The proof in the finite degree case follows an application of the previously mentioned \cite[Theorem~1.3]{HeLi}. 
The proof for infinite degree one-component inner functions relies on the fact that they admit a countable Markov partition as shown in \cite{urb}. We recall this construction in Section~\ref{Markov}, and we then prove that the partition satisfies various distortion estimates necessary to prove the theorem. The overall strategy follows that of \cite[Theorem~1.3]{HLSW}, but since the extension of $f$ to the unit circle fails to be a self-map on a set of Lebesgue measure zero, the result does not follow directly, and requires non-trivial work.

\medskip
The structure of the paper is as follows. Section~\ref{shrink} contains our shrinking target results. We begin in Section~\ref{bc} with background on the Borel–Cantelli Lemmas, needed to prove all of the aforementioned theorems. We start by proving Theorem~\ref{ABthA} in Section~\ref{pfthm4}. We then introduce preliminaries on inner functions in Section~\ref{preres}, which we apply in Section~\ref{proofb} to prove Theorem~\ref{ABthB}. In Section~\ref{backbase} we review base expansions, leading to the proof of Theorem~\ref{ABcf} in Section~\ref{apply}.
Section~\ref{nonrecres} establishes our non-autonomous recurrence results (Theorems~\ref{ABconv} and \ref{ABconvin}), while Section~\ref{autrecres} treats the autonomous zero–one criterion. There we first recall background on one-component inner functions (Section~\ref{backocif]}) before constructing their Markov partitions (Section~\ref{Markov}), and finally prove Theorem~\ref{ABrecocif} in Section~\ref{proofrecocif}. The paper concludes in Section~\ref{autnonaut} with a discussion of differences between shrinking target and recurrence results in the autonomous and non-autonomous settings.

\medskip
\section{Shrinking target background and proofs}\label{shrink}
We begin with a simple but important observation. We can express both the shrinking target set in \eqref{defshrink} and recurrence set in \eqref{defrecc} as $\limsup$ sets: 
$$\mathcal{W}(T_\N,\{x_n, r_n\}) = \limsup\limits_{n \to \infty}E_n \quad \text{ \ and \ } \quad 
\cR(T_\N,\{r_n\})= \limsup\limits_{n \to \infty} A_n \, $$  where for each $n \in \N$ $$ ~\hspace{4.5mm} E_n:= \{ x \in X: T_n(x) \in B(x_n,r_n)\} \quad \text{ \ and \ } \quad  A_n :=  \{x \in X: T_n(x) \in B(x,r_n)\} \, .$$ 
We will refer to $E_n$ and $A_n$ as the `building block sets' of the respective $\limsup$ sets. 
We will later show that proving our zero–one criteria and quantitative estimates reduces to establishing Borel–Cantelli type bounds for the corresponding building block sets. We state the relevant probabilistic results below.
\subsection{Borel-Cantelli Lemmas\label{bc}}
As before, $(X, \mathcal{B},\mu)$ will be a probability space, and now we let $P_n$ denote a sequence of events in $X$.
We start by stating the First Borel-Cantelli Lemma. 
\begin{lemma}[First Borel-Cantelli Lemma]\label{cbc}
 Suppose that $\sum_{n=1}^\infty \mu(P_n)$ converges. Then $\mu(\limsup\limits_{n \to \infty} P_n)=0$.
\end{lemma}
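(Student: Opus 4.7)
The plan is to use the standard tail-sum argument, relying only on the $\sigma$-subadditivity and monotonicity of the measure $\mu$. First I would rewrite the $\limsup$ set explicitly as
\[
  \limsup_{n\to\infty} P_n \;=\; \bigcap_{N=1}^{\infty}\bigcup_{n=N}^{\infty} P_n,
\]
so that for every fixed $N\in\N$ the set $\limsup_n P_n$ is contained in the tail union $\bigcup_{n\ge N} P_n$.

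Next, by monotonicity and countable subadditivity I would estimate
\[
  \mu\Bigl(\limsup_{n\to\infty} P_n\Bigr) \;\le\; \mu\Bigl(\bigcup_{n=N}^{\infty} P_n\Bigr) \;\le\; \sum_{n=N}^{\infty}\mu(P_n).
\]
Since $\sum_{n=1}^{\infty}\mu(P_n)$ converges by hypothesis, its tails tend to zero: $\sum_{n=N}^{\infty}\mu(P_n)\to 0$ as $N\to\infty$. Letting $N\to\infty$ in the displayed inequality forces $\mu(\limsup_n P_n)=0$, which is the desired conclusion.

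There is no genuine obstacle here; the argument is a two-line application of the basic axioms of a probability measure, and the only ``ingredient'' beyond set-theoretic bookkeeping is the elementary fact that the tails of a convergent series vanish. I would keep the write-up as short as the above sketch, since the statement is invoked repeatedly later in the paper and the reader benefits from seeing the standard identification $\limsup P_n = \bigcap_N \bigcup_{n\ge N} P_n$ spelled out once.
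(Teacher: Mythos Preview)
Your argument is correct and is exactly the standard tail-sum proof. The paper itself states Lemma~\ref{cbc} without proof (it merely remarks that it is a classical Borel--Cantelli statement), so there is nothing to compare against; your write-up is precisely the canonical argument one would expect here.
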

 The following example shows that $\sum_{n=1}^\infty \mu(P_n)$ can diverge, and yet the measure of the associated $\limsup$ set can be zero. 
\begin{example}
    Consider the space $[0,1]$ with Lebesgue measure $\lambda$ and $P_n=[1-\frac{1}{n},1)$. Then $\sum_{n=1}^\infty \lambda(P_n) = \sum_{n=1}^\infty \frac{1}{n} = \infty$, however $\limsup\limits_{n \to \infty} P_n =\emptyset \, .$
\end{example}
Therefore, for a positive measure result, we require the sets $P_n$ to be in some sense independent.
    The below lemma, often referred to as the Divergence or Second Borel-Cantelli Lemma quantifies the necessary independence condition.
    \begin{lemma}[Divergence Borel-Cantelli Lemma]\label{dbc}
         Suppose $\sum_{n=1}^\infty\mu(P_n)=\infty$ and there exists a positive constant $ C\geq1$ such that for infinitely many $N \in \N$
    \begin{eqnarray}\label{qioa}
        \sum_{1 \leq s,t \leq N}\mu\big(P_s \cap P_t \big) \: \leq \:  C\Bigl(\sum_{s=1}^N\mu(P_s) \Bigr)^2\, .
    \end{eqnarray}
 Then $\mu\big(\limsup\limits_{n \to \infty} P_n\big) \geq C^{-1}$.
    \end{lemma}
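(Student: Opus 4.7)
The plan is to run the classical second-moment argument for the Divergence Borel--Cantelli Lemma, and then use a truncation to pass from finite unions to the $\limsup$ set. Introduce the counting function $S_N := \sum_{n=1}^N \mathbf{1}_{P_n}$ and write $T_N := \sum_{n=1}^N \mu(P_n)$. Then $\int_X S_N \, \dd\mu = T_N$ while $\int_X S_N^2 \, \dd\mu = \sum_{1 \le s,t \le N} \mu(P_s \cap P_t)$, so the hypothesis \eqref{qioa} gives $\int_X S_N^2 \, \dd\mu \le C T_N^2$ for the infinitely many $N$ in the good set, call it $\mathcal{N}$.

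Next I would apply the Cauchy--Schwarz inequality to the functions $S_N$ and $\mathbf{1}_{\{S_N > 0\}}$, using the fact that $S_N \mathbf{1}_{\{S_N > 0\}} = S_N$ pointwise:
\begin{equation*}
T_N \;=\; \int_X S_N \mathbf{1}_{\{S_N > 0\}} \, \dd\mu \;\le\; \Bigl(\int_X S_N^2 \, \dd\mu\Bigr)^{1/2} \mu(\{S_N > 0\})^{1/2}.
\end{equation*}
Squaring and rearranging, and using the hypothesis, yields
\begin{equation*}
\mu\Bigl(\bigcup_{n=1}^N P_n\Bigr) \;=\; \mu(\{S_N > 0\}) \;\ge\; \frac{T_N^2}{\int_X S_N^2 \, \dd\mu} \;\ge\; \frac{1}{C} \qquad (N \in \mathcal{N}).
\end{equation*}

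To upgrade the bound on finite unions to a bound on the $\limsup$, fix an arbitrary $M \in \N$ and apply the same Cauchy--Schwarz argument to the tail sums $S_N^{(M)} := \sum_{n=M}^N \mathbf{1}_{P_n}$. The first moment is $T_N - T_{M-1}$, while the second moment is bounded above by $\int_X S_N^2 \, \dd\mu \le C T_N^2$ for $N \in \mathcal{N}$, since enlarging the index range can only increase the second moment. This gives
\begin{equation*}
\mu\Bigl(\bigcup_{n=M}^N P_n\Bigr) \;\ge\; \frac{(T_N - T_{M-1})^2}{C\, T_N^2},
\end{equation*}
and since $T_N \to \infty$ along $\mathcal{N}$, letting $N \to \infty$ produces $\mu(\bigcup_{n \ge M} P_n) \ge 1/C$ for every $M$. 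Finally, continuity from above of the finite measure $\mu$ gives $\mu(\limsup_n P_n) = \lim_{M \to \infty} \mu(\bigcup_{n \ge M} P_n) \ge C^{-1}$.

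The one subtle point, which I expect to be the main source of care rather than difficulty, is this last truncation: one must exploit that $\sum \mu(P_n) = \infty$ to absorb the finite quantity $T_{M-1}$ in the ratio $(T_N - T_{M-1})^2 / T_N^2$, and one must use that the hypothesis holds along an \emph{infinite} set of indices $N$ so that $N \to \infty$ is available after $M$ has been fixed. Everything else is a bookkeeping application of Cauchy--Schwarz and standard measure-theoretic continuity.
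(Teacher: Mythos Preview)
Your proof is correct and is precisely the Cauchy--Schwarz/second-moment argument the paper points to; indeed the paper does not give a proof at all, merely remarking that ``the proof hinges on the Cauchy--Schwarz Inequality'' and citing Harman. Your handling of the truncation step---bounding the tail second moment by the full second moment and using $T_N\to\infty$ along the infinite set $\mathcal{N}$ to absorb $T_{M-1}$---is exactly the standard route and needs no modification.
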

The proof hinges on the Cauchy-Schwarz Inequality, see \cite[Lemma~3]{Harman2} for details. We refer to \eqref{qioa} as a `quasi-independence on average' condition.
Lemma~\ref{dbc} only allows us to conclude positive, not full measure, unless $C=1$. However, often we can jump from positive measure to a full measure result via theorems such as Kolmorogorov's Zero-One Law.

The following lemma, which can be found in \cite[pg 13]{harman}, is sometimes referred to as a quantitative Borel-Cantelli Lemma.
\begin{lemma} \label{ebc}
Let $\{h_n(x)\}_{n \in \N}$ be a sequence of non-negative $\mu$-measurable functions defined on $X$, and $\{h_n\}_{n \in \N },\ \{\phi_n\}_{n  \in \N}$ be sequences of real numbers  such that
\begin{equation*}  0\leq f_n \leq \phi_n \hspace{7mm} (n=1,2,\ldots).  \end{equation*}
\noindent
Suppose that for arbitrary  $a,b \in \N$ with $a <  b$, we have
\begin{equation} \label{ebc_condition1}
\int_{X} \Bigl(\sum_{a \leq n \leq b} \big( h_n(x) -  h_n \big) \Bigr)^2\mathrm{d}\mu(x)\, \leq\,  C\!\sum_{a \leq n\leq b}\phi_n
\end{equation}

\noindent for an absolute constant $C>0$. Then, for any given $\varepsilon>0$,  we have, as $N \to \infty$,
\begin{equation} \label{ebc_conclusion}
\sum_{n=1}^N h_n(x)\, =\, \sum_{n=1}^{N}h_n\, +\, O\Bigl(\Phi(N)^{1/2}\log^{\frac{3}{2}+\varepsilon}\Phi(N)+\max_{1\leq n\leq N}h_n\Bigr)
\end{equation}
\noindent for $\mu$-almost every $x\in X$, where $
\Phi(N):= \sum\limits_{n=1}^{N}\phi_n
$. 

\end{lemma}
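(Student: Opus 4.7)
The plan is to follow the classical strategy of Gál--Koksma / Erdős--Taylor: upgrade the given variance-type hypothesis \eqref{ebc_condition1} to a Menshov--Rademacher style maximal inequality, then apply Chebyshev's inequality along a dyadic subsequence and invoke the First Borel--Cantelli Lemma (Lemma~\ref{cbc}) to obtain the almost-everywhere bound \eqref{ebc_conclusion}. Writing $S_N(x) := \sum_{n \leq N} h_n(x)$ and $T_N := \sum_{n \leq N} h_n$, the task is to control $|S_N(x) - T_N|$ as $N \to \infty$.

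The first step, and the technical heart of the argument, is to derive from \eqref{ebc_condition1} the maximal inequality
\begin{equation*}
\int_X \max_{1 \leq N \leq M} \bigl(S_N(x) - T_N\bigr)^2 \, \dd\mu(x) \, \leq \, C' \, \Phi(M) \, \log^{2}\bigl(\Phi(M)+2\bigr)
\end{equation*}
for all $M \geq 1$, with $C'$ an absolute constant. This is established by a standard dyadic decomposition: any index $N \leq M$ can be written as a union of at most $O(\log M)$ dyadic blocks of the form $[2^j a, 2^j(a+1))$; applying \eqref{ebc_condition1} on each block and combining the block-wise $L^2$ estimates via Cauchy--Schwarz converts the additive variance bound into the maximal bound, at the cost of a $\log^{2}$ factor.

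The second step is to specialise the maximal inequality along a dyadic subsequence. Let $M_k$ be the smallest index with $\Phi(M_k) \geq 2^k$. Chebyshev's inequality applied to the maximal inequality at $M = M_k$ gives
\begin{equation*}
\mu\Bigl\{ \max_{1 \leq N \leq M_k}|S_N - T_N| > 2^{k/2}\, k^{3/2+\varepsilon}\Bigr\} \,\leq\, \frac{C'\,\Phi(M_k)\, k^2}{2^{k}\, k^{3+2\varepsilon}} \,\ll\, \frac{1}{k^{1+2\varepsilon}} \, ,
\end{equation*}
which is summable in $k$. The First Borel--Cantelli Lemma then yields, for $\mu$-almost every $x \in X$, the a.e.\ bound $\max_{N \leq M_k}|S_N(x) - T_N| \ll 2^{k/2}\, k^{3/2+\varepsilon}$ for all sufficiently large $k$. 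For an arbitrary index $N$, take $k$ to be the smallest integer with $N \leq M_k$; then $k \asymp \log \Phi(N)$ and $2^{k/2} \asymp \Phi(N)^{1/2}$, so the previous display translates into $|S_N(x) - T_N| = O\bigl(\Phi(N)^{1/2}\, \log^{3/2+\varepsilon} \Phi(N)\bigr)$.

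The additive $\max_{1 \leq n \leq N} h_n$ term appearing in \eqref{ebc_conclusion} accounts for the single large term that may be needed when interpolating from $M_{k-1}$ up to a given $N$, in the (edge-case) regime where a single $\phi_n$ produces a jump in $\Phi$ comparable to $\Phi$ itself; it enters only through the passage from the dyadic subsequence to arbitrary $N$ and costs no additional effort. The principal obstacle is the derivation of the maximal inequality from \eqref{ebc_condition1}, since the hypothesis provides only second-moment information at individual pairs $(a,b)$ rather than uniform control over $N$; once that maximal estimate is in hand, the remaining probabilistic steps are routine.
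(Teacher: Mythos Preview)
The paper does not actually prove Lemma~\ref{ebc}; it is quoted from Harman's \emph{Metric Number Theory} \cite[p.~13]{harman} without argument. Your outline is essentially the classical G\'al--Koksma / Sprind\v{z}uk proof that appears there: pass from the block variance hypothesis to a Menshov--Rademacher maximal inequality via dyadic splitting (gaining the $\log^{2}$ factor), then run Chebyshev plus Borel--Cantelli along a geometrically growing subsequence in $\Phi$. One technical point to tighten: when you define $M_k$ as the least index with $\Phi(M_k)\geq 2^{k}$, the value $\Phi(M_k)$ need not be $O(2^{k})$ if a single $\phi_{M_k}$ causes a large jump, so the displayed Chebyshev bound $\Phi(M_k)k^{2}/(2^{k}k^{3+2\varepsilon})\ll k^{-1-2\varepsilon}$ does not follow as written; this is exactly the place where the extra $\max_{n\leq N} h_n$ term has to be invoked, rather than only at the final interpolation step.
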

Note that in statistical terms, if $h_n$ is the mean of $h_n(x)$, then \eqref{ebc_condition1} corresponds to bounding the variance.

\subsection{Proof of Theorem~\ref{ABthA}} \label{pfthm4}
The proof of Theorem~\ref{ABthA} relies on Lemma~\ref{ebc}. We now observe that  $E_n=T_n^{-1}(B(x_n,r_n))$, and so by the measure-preserving property of $T_n$, 
\begin{equation}\label{condmpt}
\mu(E_n) = \mu(B(x_n,r_n)) \qquad \forall \; n \, \in \; \N \, .
\end{equation}
This makes estimating the measure of the building block sets straightforward.

\begin{proof}[Proof of Theorem~\ref{ABthA}]
To prove the asymptotic statement of the theorem, we apply  Lemma~\ref{ebc} to the functions
\begin{eqnarray*}
h_n(x) := \chi_{E_n}(x)  \quad {\rm and } \quad 
h_n =\phi_n := \mu(E_n) 
\end{eqnarray*}
where $\chi$ is the characteristic function.
With this choice of functions, the LHS of \eqref{ebc_conclusion}  $$\#\{1\leq n \leq N: T_n(x) \in B(x_n,r_n)  \} \, ,$$ which is exactly the LHS of \eqref{asymt}.

We will now show that \eqref{ebc_condition1} is satisfied.
First, note that on expanding \eqref{ebc_condition1} with our choice of functions, the condition becomes
\begin{equation}\label{sumfun}
    \sum_{a \leq n \leq b} \mu(E_n) + 2\sum_{a \leq m < n \leq b} \mu(E_m \cap E_n)- \Bigl(\sum_{a \leq n \leq b}\mu(E_n)\Bigr )^2 \leq C\sum_{n=1}^\infty\mu(A_n) \, .
\end{equation}
It is clear then that if we can show that 
\begin{eqnarray}\label{sumfun1}
    2\sum_{a \leq m < n \leq b} \mu(E_m \cap E_n) \leq \Bigl(\sum_{a \leq n \leq b}\mu(E_n)\Bigr )^2  + C\sum_{a \leq n \leq b}\mu(E_n)
\end{eqnarray}
for some $C>0$, then \eqref{sumfun} is satisfied. With this in mind, we start by estimating the intersection term $\mu(E_m \cap E_n)$. Writing $n= m +l$ for some $l \in \N >0$, we have $T_{n}^{-1} = T_{m+l}^{-1} = T_m^{-1} \circ t_{m+1}^{-1} \circ \dots t_{m+l}^{-1} \, $. This, together with \eqref{condmpt}, gives
\begin{eqnarray*} \label{intbdb}
\mu(E_m \cap E_n) &=&
\mu(T_{m}^{-1}(B_m) \cap T_{m+l}^{-1}(B_{m+l})) \nonumber\\[2ex] &=& \mu\left(T_{m}^{-1}(B_m \cap t_{m+1}^{-1} \circ \cdots \circ t_{m+l}^{-1}(B_{m+l}))\right)  \nonumber \\[2ex] &=& \mu(B_m \cap t_{m+1}^{-1} \circ \cdots \circ t_{m+l}^{-1}(B_{m+l})) \, .
\end{eqnarray*}
We are now in a position to apply the fact that $T_\N$ is uniformly mixing, and so
\begin{equation} \label{sumbdb}
\begin{split}\mu(E_m \cap E_n) &\leq \mu(B_m) \mu(B_{m+l}) + \mu(B_m)\phi(l) \\[2ex]&=  \mu(B_m) \mu(B_{n}) + \mu(B_m)\phi(n-m) \, .
\end{split}
\end{equation}
Therefore, we can apply \eqref{condmpt} to \eqref{sumbdb}, and then substitute this in \eqref{sumfun} to get
\begin{eqnarray*}
    2\sum_{a \leq m < n \leq b}\mu(E_m \cap E_n)   &\leq & 
    2\sum_{a \leq m < n \leq b} \mu(E_m) \mu(E_n) + 2c \sum_{a \leq n \leq b} \mu(E_n) \\[2ex] & \leq & \Bigl(\sum_{a \leq n \leq b}\mu(E_n)\Bigr )^2 + 2c \sum_{a \leq n \leq b} \mu(E_n) 
\end{eqnarray*}
where $c = \sum\limits_{n=1}^\infty \phi(n)$. Therefore, we have shown \eqref{sumfun1} holds, and so \eqref{sumfun} is satisfied with $C= 2c+1$.
 Hence, on applying Lemma~\ref{ebc}, we have for $\mu$-almost every $x \in X$
\begin{eqnarray*}
\#\{1\leq n \leq N: T_n(x) \in B(x_n,r_n)  \}\, &=&\, \sum_{n=1}^{N} \mu(E_n) \, \\[2ex] & ~ \hspace*{-12ex} +\,&    ~ \hspace*{-5ex} O\biggl(\Bigl(\sum_{n=1}^{N} \mu(E_n)\Bigr)^{\frac{1}{2}}\log^{\frac{3}{2}+\varepsilon}\sum_{n=1}^{N} \mu(E_n) + \max_{1 \leq n \leq N}{\mu(E_n)} \biggr ) \, .
\end{eqnarray*}

\noindent Clearly, $\mu(E_n) \leq 1$ for all $n \in \N$, and so on using  \eqref{condmpt} once more, we obtain the desired asymptotic statement of Theorem~\ref{ABthA}.

We finish by establishing the `In particular' part, namely the zero-one criteria. We remark that with observation \eqref{condmpt}, we can obtain the zero measure statement immediately from Lemma~\ref{cbc}. However, the complete statement also follows directly from the asymptotic estimate \eqref{asymt} as we now show. 
Given a point $x \in X$, if
$$\lim_{N \to \infty}\#\{1\leq n \leq N: T_n(x) \in B(x_n,r_n)  \}\, = \infty \, ,$$ 
then  $x$ enters $E_n$ infinitely often, and so $x \in \mathcal{W}(T_\N,\{x_n, r_n\})$. 
Indeed, the LHS of \eqref{asymt} is finite (resp. infinite) as $N \to \infty$ precisely when $\lim_{N \to \infty}\sum_{n=1}^N \mu(B(x_n,r_n))$ converges (resp. diverges). Therefore, for $\mu$-almost every $x \in X$ we have
\begin{eqnarray*}
x \in \mathcal{W}(T_\N,\{x_n, r_n\}) & \iff & \lim_{N \to \infty}\#\{1\leq n \leq N: T_n(x) \in B(x_n,r_n)  \} = \infty  \\ &\iff& \lim_{N \to \infty}\sum_{n=1}^N \mu(B(x_n,r_n)) = \infty\, 
\end{eqnarray*}
and so the zero-one criterion follows.
\end{proof}
\medskip

\subsection{Background on inner functions}\label{preres}
In this section, we provide the background material required to prove our shrinking target criterion for inner functions. At the same time, this will allow us to establish the necessary notation.
Before we can prove Theorem~\ref{ABthB} (which follows as an application of Theorem~\ref{ABthA}), we must first identify which inner functions induce measure-preserving  systems on $\partial \D$ with suitable mixing properties. Readers already familiar with the basics of inner functions may wish to skip straight to the proof of Theorem~\ref{ABthB} in Section~\ref{proofb}.

\begin{definition}\label{definner}
    An inner function is a holomorphic  map of $f: \D \to \D$  such that for $\lambda$ almost every $e^{i \theta} \in \partial \D$, the radial limit $f^*(e^{i\theta}):=\lim\limits_{r \to 1^-}f(re^{i\theta})$ exists and belongs to $\partial \D$.
\end{definition} 
The map $f^*: \partial \D \to \partial \D$ is called the radial limit extension of $f$ and, from now on, we shall also write $f$ for $f^*$. Let $\Sigma$ denote the exceptional set of points where the radial limit does not exist or belong to $\partial \D$. If $\Sigma$ is non empty, then $f$ does not extend to a self map the closed unit disk $\overline{\D}$. 

We start with a classification of points in $\overline{\D}$. A point in $\D$ is called a regular value of $f$ if it has a neighbourhood on which all inverse branches of $f$ are well defined. A point which is not a regular value is called a singular value. We denote the set of singular values by $SV(f)$.

We say that a point $v \in \partial \D$ is a singularity of $f$ if $f$ can not be analytically continued to any neighbourhood of the point. The set of singularities of $f$ is denoted by $E(f)$. 
 The `standard' derivative is not well defined for points in the singular set. Instead, we normally use the Carathéodory angular derivative, which may exists at points in $E(f)$, as well as at all points in $\partial\D \setminus E(f)$. By convention, we set the angular derivative to be infinite at points where it doesn't exist. This convention is convenient as it is consistent with the explicit formula for the Carathéodory derivative, which is defined for all points on $\partial \D$-- see \cite[Theorem~4.15]{deriv}.  Moreover, outside of $E(f)$, the `standard' notion of derivative and the Carathéodory derivative agree, which is the only set on which we will ever need to give bounds on it. 

We now move on to giving some foundational results on inner functions.
The classical Denjoy–Wolff Theorem describes the long-term dynamics of the iterates $f^n$ on compact subsets of $\D$: 
\begin{theorem}[Denjoy-Wolff Theorem]\label{dw}
    Let $f$ be an inner function. Then exists a point $p \in \overline{\D}$ such that $\lim\limits_{n \to \infty}f^n(z)=p$ uniformly on compact subsets of $\D$. Furthermore, if 
$p \in \D$, then it is the unique fixed point of $f$ in $\D$, and $|f'(p)| < 1$. 
\end{theorem}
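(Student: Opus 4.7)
The plan is to split into two cases depending on whether $f$ has a fixed point in $\D$, each handled by a classical application of the Schwarz--Pick lemma combined either with a straightforward contraction argument or with Wolff's horocycle technique. Throughout, the fact that $f$ is merely a holomorphic self-map of $\D$ is what matters; the radial-limit data from Definition~\ref{definner} will not be needed for this theorem.

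\textbf{Case 1: $f$ has a fixed point $p \in \D$.} Excluding the trivial case in which $f$ is a M\"obius automorphism of $\D$ (where $f^n$ need not converge), the Schwarz--Pick lemma applied at $p$ yields $|f'(p)| < 1$, since equality in Schwarz--Pick forces $f$ to be a disk automorphism. This immediately gives uniqueness of the fixed point (a second fixed point would again force $f$ to be an automorphism, this time the identity). For convergence, one observes that $f$ is a strict contraction in the hyperbolic metric on every compact $K \subset \D$, with a uniform contraction factor determined by $|f'(p)|$ and the hyperbolic diameter of $K$, so $f^n(z) \to p$ uniformly on $K$.

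\textbf{Case 2: $f$ has no fixed point in $\D$.} The strategy is Wolff's: for each $r \in (0,1)$, the map $f_r := rf$ sends $\overline{\D}$ into the compactly contained set $r\overline{\D}$, so it has a (unique) fixed point $p_r \in \D$ by Brouwer's theorem. Any accumulation point $p$ of $\{p_r\}$ as $r \to 1$ must lie on $\partial \D$, for otherwise $p$ would be a fixed point of $f$ in $\D$. The key auxiliary result is Wolff's lemma: every horodisk at $p$ (a Euclidean disk in $\D$ internally tangent to $\partial \D$ at $p$) is mapped into itself by $f$. This is obtained by applying the Schwarz--Pick inequality at each $p_r$ and passing to the limit $r \to 1$, after reformulating the hyperbolic inequality in horocyclic coordinates centred at $p$. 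Given this invariance, the iterates $f^n(z)$ stay trapped in a nested family of horodisks that converges to the single point $p$.

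For the uniform convergence on compact subsets in either case, one invokes Montel's theorem: $\{f^n\}$ is a normal family because its members are uniformly bounded by $1$. Any locally uniform subsequential limit $g$ is holomorphic and satisfies $g(\D) \subset \overline{\D}$; the fixed-point contraction in Case~1 and the horodisk trapping in Case~2 respectively force $g \equiv p$, so the whole sequence converges locally uniformly to $p$.

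The main obstacle is Case~2, specifically the rigorous derivation of Wolff's lemma and the verification that the horodisk nesting pins down the limit to a single boundary point. This step is classical but delicate, since there is no interior fixed point to anchor the Schwarz--Pick estimate and one must instead pass to the limit along the auxiliary maps $f_r$ in the correct coordinate system. Once that lemma is in hand, the remaining arguments — invariance, nesting, and the normal families identification of the limit — are essentially formal.
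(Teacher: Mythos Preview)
The paper does not prove this statement; it is quoted as the classical Denjoy--Wolff Theorem and used as background without proof. Your outline is the standard argument (Schwarz--Pick contraction at an interior fixed point in Case~1, Wolff's horocycle lemma obtained via the approximants $f_r=rf$ in Case~2, and normal families to pin down the limit), and it is correct at the level of a sketch. You are also right to flag that the statement as written tacitly excludes elliptic automorphisms of $\D$: for a rotation $f(z)=e^{i\theta}z$ with $\theta/\pi$ irrational the iterates do not converge, and at the fixed point one has $|f'(0)|=1$. This caveat is standard and harmless for the paper's later applications, which concern centred inner functions that are not rotations.
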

The point $p$ in the above theorem is called the Denjoy–Wolff point of $f$.  

A later result of Doering and Ma\~{n}\'{e} \cite[Corollary~1.5]{DM} states
 $p \in \D $ if, then $f$ preserves the harmonic measure. Recall that given $A \subset \D$, the harmonic measure of any $F \subset \partial \D$ with respect to $E$ is given by $$\omega(A,\D)(F)= \int_{F}\frac{1-|A|^2}{|A-z|^2}\,d\lambda(z) \, .$$
Note that in the case where $A=0$, we have $\omega(0,\D)(F)=\lambda(F)$, and so the harmonic measure is simply $\lambda$; i.e. normalised Lebesgue measure. This brings us to the following result.

\begin{lemma}[Löwner's Lemma] \label{low}
If $f : \D \to \D$ is an inner function, then its radial extension $f^*: \partial \D \to \partial \D$ preserves $\lambda$ if and only if $f$ is centred.
\end{lemma}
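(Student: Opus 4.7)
The plan hinges on the identification $\lambda = \omega(0,\D)$ of normalised Lebesgue measure on $\partial\D$ with the harmonic measure from the origin. With that in hand, both directions reduce to the mean value property applied to $u \circ f$, where $u$ is the harmonic extension of $\chi_E$ to $\D$ and $E \subset \partial\D$ is an arbitrary Borel set.

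I would first establish the forward direction. Suppose $f(0) = 0$. For a Borel $E \subset \partial\D$, set $u(z) := \omega(z, \D)(E)$, so $u$ is bounded and harmonic on $\D$ with $u(0) = \lambda(E)$. Since $f$ is holomorphic, $u \circ f$ is bounded and harmonic on $\D$, and Fatou's theorem together with the fact that $f$ is inner (so $f^*(e^{i\theta}) \in \partial\D$ for $\lambda$-a.e.\ $e^{i\theta}$) shows that the boundary values of $u \circ f$ equal $\chi_{(f^*)^{-1}(E)}$ almost everywhere on $\partial\D$. The mean value property then gives
\[
\lambda\bigl((f^*)^{-1}(E)\bigr) \;=\; (u \circ f)(0) \;=\; u(f(0)) \;=\; u(0) \;=\; \lambda(E) \, ,
\]
proving that $f^*$ preserves $\lambda$.

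For the converse, I would run the identical argument but assume only that $f^*$ preserves $\lambda$. Setting $a := f(0) \in \D$, the same mean value computation yields
\[
\lambda(E) \;=\; \lambda\bigl((f^*)^{-1}(E)\bigr) \;=\; u(a) \;=\; \int_{E} \frac{1-|a|^2}{|a-z|^2}\,d\lambda(z)
\]
for every Borel $E \subset \partial\D$. Varying $E$ forces the Poisson kernel $\tfrac{1-|a|^2}{|a-z|^2}$ to equal $1$ for $\lambda$-a.e.\ $z \in \partial\D$, which can occur only when $a = 0$. Thus $f$ must be centred.

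The only genuinely subtle step is the justification that the boundary values of $u \circ f$ coincide with $\chi_{E} \circ f^*$ almost everywhere on $\partial\D$. Here the inner function hypothesis is essential, since it is precisely what ensures $f^*$ takes $\partial\D$ into $\partial\D$ off a $\lambda$-null set, so that $\chi_E \circ f^*$ is well-defined almost everywhere, after which Fatou's theorem for bounded harmonic functions on $\D$ pins down the radial limits of $u \circ f$. Everything else reduces to a direct mean-value calculation.
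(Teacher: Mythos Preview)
The paper does not actually prove this lemma: it is stated as a classical named result (Löwner's Lemma) and used as background, with no proof supplied. So there is nothing in the paper to compare your argument against.

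That said, your argument is the standard one and is correct. The identification $\lambda=\omega(0,\D)$, the harmonic extension $u(z)=\omega(z,\D)(E)$, and the mean value computation are exactly the classical route. You are also right that the only genuinely delicate step is identifying the radial boundary values of $u\circ f$ with $\chi_E\circ f^*$ almost everywhere; this is where the inner-function hypothesis is used, and it is typically justified either by a Lindelöf-type theorem on boundary limits of bounded harmonic functions along curves, or by first treating arcs $E$ (where $u$ extends continuously across the interior of $E$ and of its complement) and then passing to general Borel sets by a monotone class argument. Your write-up would be strengthened by naming which of these you intend, but the overall proof is sound.
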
  

Now that we have established a class of inner functions which preserve Lebesgue measure, we turn to exploring the expansion and mixing properties of inner functions. The measure-preserving property interacts naturally with the geometric expansion of $f$ on the boundary. We say that $f$ is expanding on $\partial\D$ if $|f'(z)|>1$ for all $z \in \partial\D$, and uniformly expanding on $\partial\D$ if  
$$
\inf_{z \in \partial\D} |f'(z)| > 1.
$$
The next lemma can be found in \cite[Corollary~4.16]{deriv}.
\begin{lemma}\label{inexp}
Let $f$ be a centred inner function. Then $f$ is uniformly expanding on $\partial \D$.
\end{lemma}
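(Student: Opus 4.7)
The plan is to derive a pointwise formula for the boundary derivative of a centred inner function from its canonical factorisation, and then to read off a uniform lower bound strictly greater than $1$ directly from the presence of a zero at the origin.

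First I would invoke the Riesz factorisation: up to a unimodular constant we may write $f(z) = z\, B(z)\, S(z)$, where $B$ is the Blaschke product formed from the remaining zeros $\{a_k\} \subset \D \setminus \{0\}$ and $S$ is the singular inner factor associated to some positive singular measure $\mu$ on $\partial \D$. The prefactor $z$ captures exactly the centredness condition $f(0)=0$. Logarithmic differentiation of each factor, combined with the elementary identity $(\zeta - a)(1-\bar a \zeta) = \zeta\,|\zeta-a|^2$ valid on $|\zeta|=1$, yields the classical boundary formula
\begin{equation*}
|f'(\zeta)| \;=\; 1 \,+\, \sum_k \frac{1-|a_k|^2}{|\zeta - a_k|^2} \,+\, 2\int_{\partial\D} \frac{d\mu(t)}{|\zeta-e^{it}|^2},
\end{equation*}
valid at every $\zeta \in \partial \D$ at which the Carath\'{e}odory angular derivative exists (and interpreted as $+\infty$ otherwise). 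The leading $1$ is precisely the contribution of the factor $z$.

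Next I would extract a uniform lower bound strictly greater than $1$. Provided $f$ is not a M\"{o}bius rotation -- the only case in which both the sum and the integral are empty, and automatically excluded in the applications of this lemma since one has $|f'(0)|<1$ there -- at least one of the following holds: (i) there is some non-zero Blaschke zero $a_k$, or (ii) the singular measure $\mu$ is non-trivial. In case (i), using $|\zeta - a_k| \leq 1 + |a_k|$ yields
\begin{equation*}
|f'(\zeta)| \;\geq\; 1 + \frac{1-|a_k|^2}{(1+|a_k|)^2} \;=\; 1 + \frac{1-|a_k|}{1+|a_k|}
\end{equation*}
uniformly in $\zeta$, and in case (ii) the crude bound $|\zeta - e^{it}| \leq 2$ gives $|f'(\zeta)| \geq 1 + \mu(\partial\D)/2$ uniformly in $\zeta$. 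Either way, $\inf_{\zeta \in \partial \D} |f'(\zeta)| > 1$, as required.

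The main obstacle I expect is rigorously justifying the boundary derivative formula for a general, possibly infinite-degree inner function: for a finite Blaschke product it is an elementary calculation, but for the singular factor one must either pass to a radial limit in a Schwarz--Pick-type identity, or interpret the formula through the Julia--Carath\'{e}odory theorem on angular derivatives together with the pointwise convergence of the Herglotz representation. A comforting consistency check is that, for finite centred Blaschke products, integrating the right-hand side against $d\lambda$ recovers the degree of $f$, matching the measure-preserving property guaranteed by Löwner's Lemma~\ref{low}.
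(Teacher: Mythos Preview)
Your argument is correct. The paper does not actually prove this lemma; it merely cites \cite[Corollary~4.16]{deriv}. Your proposal reproduces the standard proof from that reference: the explicit boundary-derivative formula you invoke is precisely \cite[Theorem~4.15]{deriv}, which the paper itself points to when discussing the Carath\'eodory angular derivative, and the uniform lower bound is then read off exactly as you describe. Your treatment of the rotation exception is appropriate---the lemma as stated in the paper tacitly excludes $f(z)=cz$ with $|c|=1$, consistent with the standing hypothesis $|f'(0)|<1$ used throughout.
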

From Lemma~\ref{low} and Lemma~\ref{inexp}, we conclude that if $f$ is centred, then 
$
(\partial\D, \mathcal{B}, \lambda, f, d)
$
is a uniformly expanding, $\lambda-$measure-preserving dynamical system on $\partial \D$.
We say a centred inner function is contracting (in $\D$) if  $|f'(0)|=\alpha<1$.   Given a sequence $F_\N$ of centred inner function, we say $F_\N$ is contracting if $|f_n'(0)|=\alpha_n<1$ for all $n \in \N$, and, as mentioned in the introduction, uniformly contracting if $|f_n'(0)| \leq \alpha < 1$. For centred inner functions, $\alpha_n$ is the hyperbolic distortion of $f$ at $0$. More generally, we say a sequence of inner functions is contracting if $\sum_{n=1}^\infty (1-\alpha_n) = \infty $, in which case the hyperbolic distortion $\alpha_n$ may have a more complicated expression-- see \cite{BERFS} and the references within for details.
Therefore, centred inner functions always act as contractions on 
$\D$ and as expansions on $\partial \D$;  moreover, uniform contraction on 
$\D$ is equivalent to uniform expansion on $\partial \D$.

The next two key propositions, proven by Pommerenke in \cite{Pom81}, are in the non-autonomous setting. The first of these provides a useful zero-one law.
\begin{proposition} \label{contractpom}
    Let $F_\N$ be a sequence of contracting inner functions. 
If there are measurable subsets $L, L_n \subset \partial \D$ such that $L = F_n^{-1}(L_n)$ for all $n \in \N$ up to a set of (Lebesgue) measure zero, then $L$ has either full or zero measure on $\partial \D$.
\end{proposition}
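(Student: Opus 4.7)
The plan is to show that the Poisson extension of $\chi_L$ into $\D$ must be constant, from which the conclusion $\lambda(L)\in\{0,1\}$ follows immediately, since a constant harmonic function whose almost everywhere radial boundary values lie in $\{0,1\}$ must itself equal $0$ or $1$.

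\textbf{Transferring the identity inside $\D$.} Let $u := P[\chi_L]$ and $u_n := P[\chi_{L_n}]$ denote the Poisson extensions of $\chi_L$ and $\chi_{L_n}$; both are bounded harmonic maps $\D \to [0,1]$ whose almost everywhere radial boundary values recover $\chi_L$ and $\chi_{L_n}$ respectively. The composition identity $P[g \circ F^*] = P[g] \circ F$, valid for any inner function $F$ and bounded measurable $g$ on $\partial\D$, together with the hypothesis $\chi_L = \chi_{L_n}\circ F_n$ almost everywhere, yields
\[
    u(z) \;=\; u_n(F_n(z)) \qquad \text{for every } z \in \D \text{ and every } n \in \N\, .
\]
In particular $\lambda(L) = u(0) = u_n(F_n(0))$ for all $n$.

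\textbf{Hyperbolic gradient vanishes at $0$ via contraction.} Differentiating $u = u_n \circ F_n$ and using the standard bound $(1-|w|^2)|\nabla u_n(w)| \le C$ for bounded harmonic $u_n : \D \to [0,1]$, with an absolute constant $C$, I obtain
\[
    (1-|z|^2)|\nabla u(z)| \;\le\; C\, \gamma_n(z)\, , \qquad \gamma_n(z) := \frac{(1-|z|^2)|F_n'(z)|}{1-|F_n(z)|^2}\, ,
\]
where $\gamma_n(z)$ is the hyperbolic distortion of $F_n$ at $z$. By the chain rule for hyperbolic derivatives $\gamma_n(0) = \prod_{k=1}^n \alpha_k$, and the contracting hypothesis $\sum(1-\alpha_n)=\infty$ is equivalent to $\prod \alpha_k \to 0$; hence $\gamma_n(0) \to 0$ and $\nabla u(0) = 0$.

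\textbf{Main obstacle and conclusion.} The key remaining step is to propagate $\gamma_n \to 0$ from the single point $z = 0$ to the whole disk, thereby forcing $u$ to be constant. Schwarz--Pick gives the pointwise monotonicity $\gamma_{n+1} \le \gamma_n \le 1$, so $\gamma_n$ decreases to a nonnegative limit on $\D$, and the plan is to establish a Harnack-type inequality for $\log(1/\gamma_n)$ of the form $\gamma_n(z) \le \gamma_n(0)^{c_K}$ on any compact $K \Subset \D$ containing $0$, with $c_K > 0$ independent of $n$. This Harnack-type comparison, which encodes the rigidity of holomorphic self-maps of the disk relative to the class of conformal automorphisms, is the main technical obstacle. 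Once in place, $\gamma_n \to 0$ locally uniformly on compacta, so $(1-|z|^2)|\nabla u(z)| = 0$ throughout $\D$; $u$ is therefore constant, and its almost everywhere $\{0,1\}$-valued boundary data force $\lambda(L) \in \{0,1\}$.
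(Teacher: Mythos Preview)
The paper does not give its own proof of this proposition; it is quoted from Pommerenke \cite{Pom81}, so there is no in-paper argument to compare against. Your overall strategy---extend $\chi_L$ harmonically, use $u=u_n\circ F_n$, and show $u$ is constant---is the right one, and your first two steps are correct.

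The genuine gap is your ``main technical obstacle''. You assert a Harnack-type bound $\gamma_n(z)\le\gamma_n(0)^{c_K}$ but do not prove it, and the mechanism you propose cannot work as stated. A short computation gives $\Delta\log\gamma_n=\dfrac{4(\gamma_n^2-1)}{(1-|z|^2)^2}\le 0$, so $\log(1/\gamma_n)\ge 0$ is \emph{subharmonic}, and nonnegative subharmonic functions admit no lower Harnack inequality: knowing the value is large at $0$ says nothing pointwise elsewhere. Worse, the literal inequality is false whenever $F_n'(0)=0$ (any $f_k$ with a critical point on the orbit), since then $\gamma_n(0)=0$ but $\gamma_n\not\equiv 0$.

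A clean way to close the argument avoids $\gamma_n(z)$ altogether: show instead that the pseudohyperbolic distance $s_n:=\sigma(F_n(z),F_n(0))\to 0$ for every $z\in\D$. The refined Schwarz--Pick lemma (Schur reduction) gives, for any holomorphic $f:\D\to\D$ and $p,q\in\D$,
\[
\sigma(f(p),f(q))\ \le\ \sigma(p,q)\,\frac{\sigma(p,q)+\gamma_f(p)}{1+\gamma_f(p)\,\sigma(p,q)}\,,
\]
so with $p=F_{k-1}(0)$, $q=F_{k-1}(z)$ and $\gamma_{f_k}(p)=\alpha_k$ one obtains $s_k\le s_{k-1}\bigl(1-c(1-\alpha_k)\bigr)$ for $c=(1-s_0)/2>0$. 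The contracting hypothesis $\sum(1-\alpha_k)=\infty$ then forces $s_n\to 0$. Since bounded harmonic functions are hyperbolically Lipschitz (the estimate you already used), $|u(z)-u(0)|=|u_n(F_n(z))-u_n(F_n(0))|\le C\,d_\D(F_n(z),F_n(0))\to 0$, and $u$ is constant.
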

 
In \cite[Lemma~3]{Pom81},  Pommerenke proved that a subclass of inner functions are exponentially mixing. Moreover, this mixing holds for compositions of a sequence of maps, not just iterates of a single map.
 \begin{proposition}[Exponential mixing for inner functions] \label{expmixcomp}
 Let $F_\N$ be a sequence of centred, uniformly contracting inner functions, so for all $n \in \N$ we have $|f'_n(0)| \leq \alpha <1$. Then, for some absolute $K > 0$ we have
 \begin{equation}\label{exppom}
   \left|\frac{\lambda(A\cap F_n^{-1}E)}{\lambda(E)}-\lambda(A)\right| \,  \le  \,  K \, e^{- n\tau}     \quad  \forall \, n \in \N,
\end{equation}
for all arcs $A \subset \partial\D$ and measurable sets $E$ in $\partial\D$ with $\lambda(E) > 0 $, where $\tau= \frac{1-\alpha}{84} $.
 \end{proposition}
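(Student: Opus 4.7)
The plan is to deduce the exponential mixing bound by combining measure preservation with a quantitative interior contraction of $F_n$ and a harmonic--measure transfer from interior to boundary.

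First, I would invoke L\"owner's Lemma (Lemma~\ref{low}) to conclude that each centred $f_k$ preserves $\lambda$, and hence so does $F_n$. In particular $\lambda(F_n^{-1}(E)) = \lambda(E)$, so it suffices to control the cross-term $\lambda(A \cap F_n^{-1}(E)) - \lambda(A)\lambda(E)$. Next, I would apply the sharpened Schwarz Lemma iteratively to each $w \mapsto f_k(w)/w$ (using $f_k(0) = 0$ and $|f_k'(0)| \le \alpha$), showing that for any fixed $\rho < 1$ there exists $C(\rho) > 0$ with
\[
\sup_{|w| \le \rho}|F_n(w)| \le C(\rho)\,\alpha^n \qquad (n \ge 1).
\]
The heuristic here is that, once the orbit of $\{|w|\le \rho\}$ under successive compositions is pulled inside a small neighbourhood of the origin, each further $f_k$ contracts at rate essentially $\alpha$; only a fixed ``warm-up'' phase depending on $\rho$ is needed. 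Combining this with the standard inner-function transformation $\omega(w,\D)(F_n^{-1}(E)) = \omega(F_n(w),\D)(E)$ and the Poisson-kernel estimate $|P(w,\cdot) - 1| \le 4|w|/(1-|w|)^2$, I would deduce
\[
|\omega(w,\D)(F_n^{-1}(E)) - \lambda(E)| \le C'(\rho)\,\alpha^n\,\lambda(E) \qquad (|w| \le \rho),
\]
controlling the deviation from $\lambda(E)$ at every interior point.

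The final step is to convert this interior estimate into a quantitative boundary estimate involving the arc $A$. For this I would exploit the Poisson--kernel symmetry $P(r\xi,\zeta) = P(r\zeta,\xi)$ to obtain, for every $r \in (0,1)$, the identity
\[
\int u_A(r\xi)\,\chi_E(F_n(\xi))\,d\lambda(\xi) = \int \chi_A(\xi)\,u_E(F_n(r\xi))\,d\lambda(\xi),
\]
where $u_A, u_E$ denote the Poisson extensions of $\chi_A, \chi_E$. Both sides converge to $\lambda(A \cap F_n^{-1}(E))$ as $r \to 1^-$ by dominated convergence (LHS) and Fatou's theorem applied to $u_E\circ F_n$ (RHS), while for fixed $r<1$ the right-hand side lies within $C_1(r)\alpha^n\lambda(A)\lambda(E)$ of $\lambda(A)\lambda(E)$ by the previous step, since $|F_n(r\xi)| \le C(r)\alpha^n$.

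The hard part will be reconciling the divergence of $C(r)$ and $C_1(r)$ as $r \to 1$ with the need to take $r \to 1$ to recover the exact value $\lambda(A \cap F_n^{-1}(E))$. I expect this trade-off to require the arc structure of $A$---specifically, the Fourier decay $|\hat{\chi}_A(k)| \lesssim \min(\lambda(A), 1/|k|)$---together with a carefully chosen truncation scale $r = r_n$ coupled to $\alpha^n$ and separate control of the residual contribution from the boundary set on which $|F_n(r_n \xi)|$ is too close to $1$ (bounded via $\sum_{k\ge 1}|c_k|^2=1$ for the Taylor coefficients of the inner function $F_n$). Bookkeeping all implicit constants through the Schwarz--Pick iteration, the Poisson-kernel bound, and this trade-off parameter should then produce the explicit exponent $\tau = (1-\alpha)/84$ in the statement.
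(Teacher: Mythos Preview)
The paper does not supply a proof of this proposition: it is quoted as Pommerenke's result \cite[Lemma~3]{Pom81} and used as a black box, so there is no in-paper argument to compare against.

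Your outline assembles the natural ingredients---L\"owner, the Schwarz--Pick iteration giving $|F_n(w)|\le C(\rho)\alpha^n$ on $\{|w|\le\rho\}$, and the Poisson-symmetry identity---but the step you flag as ``the hard part'' is a genuine obstruction, and the resolution you sketch does not close it. The required bound is $|\lambda(A\cap F_n^{-1}E)-\lambda(A)\lambda(E)|\le K\,\lambda(E)\,e^{-n\tau}$, carrying a full factor of $\lambda(E)$. Your RHS estimate at level $r$ does carry this factor, but the truncation error $\int(u_A(r\xi)-\chi_A(\xi))\chi_{F_n^{-1}E}(\xi)\,d\lambda(\xi)$ does not: Cauchy--Schwarz gives at best $\|u_A(r\cdot)-\chi_A\|_2\sqrt{\lambda(E)}$, and $L^1$--$L^\infty$ duality gives $\|u_A(r\cdot)-\chi_A\|_1\asymp(1-r)\log\tfrac{1}{1-r}$ with no $\lambda(E)$ at all. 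Balancing either of these against $C(r)\alpha^n\lambda(E)$, where the Schwarz--Pick iteration gives $C(r)\asymp(1-r)^{-c}$ for some $c>0$, produces an error of the shape $\lambda(E)^\beta e^{-n\tau'}$ with $\beta<1$, not $\beta=1$. The Fourier-decay and Taylor-coefficient remarks do not recover the missing factor, because $F_n^{-1}E$ is an arbitrary measurable set of measure $\lambda(E)$ and can sit entirely in the $O(1-r)$ ``bad'' neighbourhood of the endpoints of $A$.

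The idea that does work (and underlies Pommerenke's argument) avoids the $r$-truncation altogether. One passes to the analytic completion: write $u_E(z)-\lambda(E)=\mathrm{Re}\,H_E(z)$ with $H_E(0)=0$; the Herglotz representation for the positive harmonic function $u_E$ gives $|H_E(z)|\le \tfrac{2\lambda(E)}{1-|z|}$, so the crucial $\lambda(E)$ factor is built in from the start. Then $\int_A(\chi_E\circ F_n-\lambda(E))\,d\lambda$ is the real part of a contour integral of $H_E\circ F_n$ over the arc $A$, which one converts (using a primitive of $H_E(F_n(w))/(iw)$) into boundary values at the two endpoints of $A$; the resulting radial integrals are controlled by combining the Herglotz bound with your Schwarz--Pick contraction of $|F_n|$. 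The arc hypothesis is thus used not through Fourier decay but through this two-endpoint reduction, and the bookkeeping along the radii is what produces the constant $84$.
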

\begin{remark}
The uniform contracting condition $|f_n'(0)| \leq \alpha<1$  ensures that the compositional map $F_n$ satisfies, $|F_n'(0)| \leq \alpha ^n$ for each $n \in \N$. In \cite{BERFS}, Benini-Evdoridou-Fagella-Rippon-Stallard study inner functions which are contracting (that is $|f_n'(0)|=\alpha_n <1$ for each $n \in \N$) rather than uniformly contracting, and show that a full measure shrinking target result follows if the sum of $r_n (1-\alpha_n)$ diverges, where $r_n$ is the sequence of radii in \eqref{defshrink}. 
 \end{remark}
Clearly, a system that satisfies \eqref{exppom} is uniformly mixing in the sense of Definition~\ref{ABunimix}. 
Therefore, we have established that a sequence of centred, uniformly contracting inner functions $F_{\N}$ gives rise to a non-autonomous system $(\partial\D, \mathcal{B},\lambda,F_{\N},d)$ which exponentially mixing and uniformly expanding on $\partial \D$.

\subsection{Proof of Theorem~\ref{ABthB}}\label{proofb}
The non-autonomous system which is the subject of Theorem~\ref{ABthB} is $(\partial \D, \mathcal{B},\lambda,F_\N,d)$, described in the last section. To prove this theorem, we must show that the system satisfies the hypotheses of Theorem~\ref{ABthA}. With Pommerenke's exponential mixing result (Proposition~\ref{expmixcomp}) this becomes a straightforward task.

\begin{proof}[Proof of Theorem~\ref{ABthB}]
     To apply Theorem~\ref{ABthA}, we must show that $F_\N$ is uniformly mixing. As discussed in Remark~\ref{fmpmix}, this is direct from Proposition~\ref{expmixcomp}. We now want to show that the asymptotic statement is governed by terms involving $r_n$ explicitly, not $\lambda(E_n)$. The measure $\lambda$ is $\delta$-Ahlfors regular with $\delta=1$, as clearly for all $x \in \partial \D$, $\lambda(B(x,r))=r/\pi$. Therefore, by \eqref{condmpt}, we have $\lambda(E_n)=r_n/\pi$, and this concludes the proof.
\end{proof}

\subsection{Background on base expansions}\label{backbase}
With Theorem~\ref{ABcf} in mind, we provide an appropriate overview of base expansions.
We start by considering maps of the form 
$$
t: [0,1] \to [0,1] \quad \text{ with } \quad  t(x) \equiv bx \mod 1 
$$
where $2\leq b \in \N$.  By induction, it is easy to show that $t^n(x) \equiv b^n x \mod 1$ for all $n$.  Iterating $t$ gives rise to a dynamical system on the unit interval;  $([0,1],\mathcal{B},\lambda,t, d_{e})$, where $d_e$ is Euclidean distance.
\\
Recall that the base $b$ expansion of $x \in [0,1)$ is given by
$$
x = \sum_{i=1}^{\infty} \frac{a_i}{b^i} \, , \  a_i \in \{0,\dots,b-1\} \, ,
$$
and, aside from the set of codings for which there exists $i_0$ such that $a_i=b-1 \: \forall \: i \geq i_0 \in \N$, this representation is unique. We write $x=[a_1,..,a_i,\dots]$, and can view $t$ as a left shift map:
$$t([a_1,\dots,a_i,..]) \, = \,  [a_2,\dots a_{i+1},\dots] \, .$$

 Notice that our condition on $2 \leq b \in \N $ is essential to establish the link to symbolic dynamics. We give an example to indicate that this ``link'' does not hold for all $b \in \R_{\geq2}$. 
\begin{example}
Let $b= \sqrt{5}$, and consider $t(x) \equiv \sqrt{5}x \mod 1$. Consider $x=1/2$. Then $t^2(x)\equiv5/2\mod 1$, but $t(t(x)\mod 1)\mod 1$ is irrational.
\end{example}
We can explicitly relate the map $t$ to the inner function $g(z)=z^b$ by the map $\phi : [0,1] \to \partial \D \, , \phi(\theta)=e^{2 \pi i \theta}$ which induces an isomorphism of dynamical systems:
$$([0,1],\mathcal{B},\lambda,t, d_{e}) \cong (\partial \D,\mathcal{B}, \lambda,g, d) \, .$$ 
This isomorphism is used to translate between the study of inner functions and base expansions-- see, for example, \cite[Theorem~4.5]{FMP}. 

We turn to the corresponding set up for compositions of base expansions. The sequence of maps $t_i(x)\equiv b_i x \mod 1$ gives rise to the dynamical system $([0,1], \mathcal{B}, \lambda, T_{\N}, d_{e})$. Similarly, the sequence
 $g_i(z)=z^{b_i}, \, b_i \in \N_{\geq 2}$ gives rise to the dynamical system $(\partial \D, \mathcal{B},\lambda, G_{\N}, d)$. It is not hard to check that these are indeed isomorphic: 
 \begin{eqnarray}\label{iso}
     ([0,1], \mathcal{B}, \lambda, T_{\N}, d_e) \cong (\partial \D, \mathcal{B}, \lambda, G_{\N}, d) \, .
 \end{eqnarray}
 As described in the introduction, the multibase or `Cantor series' expansion of $x \in [0,1)$ with respect to $\{b_i\}_{n \in \N}$, $b_i \in \N_{\geq 2}$ is the unique expansion:
$$
x = \sum_{i=1}^{\infty} \frac{d_i}{b_1\dots b_i} \, , \  d_i \in \{0,\dots,b_i-1\}  \, ,
$$
where $d_i \neq b_i -1$ for all $i\geq i_0$, and is denoted by $x= [d_1,..,d_i,\dots]$. 
In contrast to the fixed base case $b$, the transformation $T_n$ can no longer be interpreted as a simple left-shift of the digit string; that is to say:
$$
T_1([d_1,\dots,d_i,\dots]) \, \neq \, [d_2,\dots,d_{i+1},\dots] \, .
$$
Indeed, in the multibase setting, we have $$T_n( [d_1,\ldots,d_n,\ldots] ) = \frac{ d_{n+1} }{ b_{n+1} } + \frac{ d_{n+2} }{ b_{n+1} b_{n+2} } + \cdots \, .$$
Therefore, the map $T_n$ acts as a left shift not only on the digits but also on the sequence of bases itself; it replaces the origional base sequence $\{b_i\}_{i \in \N}$ by the shifted sequence $ \{b_{n+i}\}_{i \in \N}$. As in the autonomous setting, the sequence of bases was the constant sequence $b_n=b$ for all $n \in \N$, this shift in the bases is invisible; there is no effective change in the underlying expansion mechanism. However, as the proof of Theorem~\ref{ABcf} reveals, in the non-autonomous setting the shift of the base sequence is exactly what restricts the result to sequences of zeros, rather than to general digit sequences.

Finally, we conclude the section by mentioning \cite[Theorem~1.1]{Sun}, in which Sun and Cao obtain a zero-one criterion (without asymptotic information) for the shrinking target set associated to the multibase expansion with a fixed $x_0$.
\medskip

\subsection{Proof of Theorem~\ref{ABcf}} \label{apply}
 Theorem~\ref{ABcf} will be deduced as an application of Theorem~\ref{ABthB}.
\begin{proof} [Proof of Theorem~\ref{ABcf}]
Let $G_\N$  be a sequence of centred, uniformly contracting inner functions of the form $g_i=z^{b_i}$, and $G_n=g_n \circ\dots\circ g_1 $. Then, by Proposition~\ref{expmixcomp}, $(\partial \D, \mathcal{B}, \lambda, G_{\N}, d)$ is an exponential mixing, $\lambda$-measure-preserving  system. The isomorphism given in \eqref{iso} implies that $([0,1], \mathcal{B},\lambda, T_{\N}, d_e)$ with $t_i\equiv b_i \mod 1$ is also exponentially mixing. Hence, this system satisfies all the conditions of Theorem~\ref{ABthB}.
   
We now illustrate the link between the shrinking target set and the digit set $D(\xi)$. Given a point $x=[d_1,\dots,d_n,\dots] \in [0,1)$, the map $T_n$ acts as follows on $x$:
$$T_n(x) = \frac{d_{n+1}}{b_{n+1}} \ + \ \frac{d_{n+2}}{b_{n+1}b_{n+2}} \ + \ \dots \ + \ \frac{d_{n+\xi(n)}}{b_{n+1}\dots b_{n+\xi(n)}} \ + \ \frac{d_{n+1+\xi(n)}}{b_{n+1}\dots b_{n+1+\xi(n)}} \ + \ \dots \, $$ 
To force $ d_{n+1} = d_{n+2} = \cdots = d_{n+\xi(n)}=0$, it suffices to take 
$$
|T_n(x)| < r_n \quad \text{where} \quad   r_n:= \frac{ 1 }{ b_{n+1} \cdots b_{n+\xi(n)+1} } \, .$$ This inequality is equivalent to the statement that \begin{eqnarray}\label{ineq1}\quad  T_n(x) \  \in  \ B\left(0, r_n\right) \, .
\end{eqnarray} 
For each $n \in \N$, by choosing $\xi$ so that
\begin{eqnarray}\label{cond1}
    \frac{b_{n+1}}{b_{n+\xi(n)+2}\dots b_{n+\xi(n+1)+2}}\leq 1 \, ,
\end{eqnarray} 
 we can ensure that the sequence $\{r_n\}_{n \in \N}$ is decreasing.
Now observe that \eqref{ineq1} is equivalent to $x \in E_n$, where $E_n$ is the building block set associated to the shrinking target set $\mathcal{W}(T_\N,0,\{r_n\})$.
Thus, it follows that $$x \in \mathcal{W}(T_\N,0,\{r_n\}) \implies d_{n+1}=...=d_{n+\xi(n)}=0 \text{ \ for infinitely many \ } n \in \N \, $$ and, in particular, Theorem~\ref{ABcf} follows from a direct application of Theorem~\ref{ABthB}.
\end{proof}
\medskip
As mentioned in the introduction, previous works such as \cite{mance} explore the typicality of normal numbers for the multibase expansion. Therefore, the more `interesting' case of Theorem~\ref{ABcf} is when $D(\xi)$ consists of arbitrarily long strings of zeros. We wish to analyse when this case occurs on a set of full measure. If the sequence $\{b_n\}_{n \in \N}$ is bounded, say $b_n \leq K$ for all $n \in \N$, then we can always find an increasing, unbounded $\xi$ that satisfies the divergent sum condition. Indeed notice that 
$$
r_n = \frac{1}{b_{n+1}...b_{n+1+\xi(n)}} \geq \frac{1}{K^{\xi(n)}} \, .
$$
Picking $\xi(n) \approx \log_K(n) \implies K^{\xi(n)}  \approx K^{\log_K(n)} = n$, and so we are done.
This motivates us to ask the following:
 \begin{question}
Given a sequence of bases $\{b_n\}_{n \in \N}$ increasing to infinity, is it possible for the sum $\sum_{n=1}^\infty\frac{1}{b_{n+1}\dots b_{n+1+\xi(n)}}$ to diverge if $\xi$ is an increasing and unbounded function?
 \end{question} 
 We first note that if we impose the condition that $\{b_n\}_{n \in \N}$ is an increasing sequence, then \eqref{cond1} is satisfied independently of the choice of $\xi$.
The answer then depends on the growth rate of $\{b_n\}_{n \in \N}$, as we illustrate in the following examples. The first shows that if $\{b_n\}_{n \in \N}$ is an increasing sequence of power law, then the answer to the question is in the negative.
\begin{example}
    Let $b_n \approx  n^\epsilon$ for any $\epsilon > 0$. Then we shall show that $\sum_{n=1}^\infty r_n = \infty$ implies $\xi(n) \leq K$. Suppose, for contradiction, that we could find an unbounded, increasing $\xi(n)$ which made the sum diverge. Then there exists some $n_0$ such that for all $n \geq n_0$, we have $\xi(n) \ge \lceil 2/\epsilon \rceil$. Hence,
    $$r_n=\frac{1}{b_{n+1}\dots b_{n+1+\xi(n)}} \lesssim \frac{ 1 }{ (n^\epsilon)^{2/\epsilon} }=\frac{1}{n^2} \, .$$ 
    Clearly, this contradicts the divergence of the sum of $r_n$, and hence such a $\xi$ does not exist.
\end{example}
However, if $\{b_n\}_{n \in \N}$ is an increasing sequence of logarithmic growth, then we can answer the question in the affirmative:
\begin{example}
Let $b_n \approx \log(n)$. Then we can choose $\xi(n) \approx \big(\log(n)\big)^{1- \epsilon}$ for some $0 < \epsilon <1$ such that $\sum_{n=1}^\infty r_n = \infty$. Indeed, observe that 
$$
\frac{1}{b_{n+1}\dots b_{n+1+\xi(n)}} \gtrsim \frac{ 1 }{ \Bigl( \log( n + (\log(n))^{1-\epsilon} + 1 ) \Bigr)^{ (\log(n))^{1-\epsilon} } }
 \, .
$$
Notice that 
$$
\Big(\log\big(n + (\log(n))^{1-\epsilon}+1\big)\Big)^{1-\epsilon} = \exp\Big((\log(n))^{1-\epsilon}\log\log \big( n + (\log(n))^{1- \epsilon} +1   \big) \Big) \, .
$$
Then we can find some $n_0$ such that for all $n \geq n_0$, 
$$
\log\log \big( n + (\log(n))^{1- \epsilon} +1   \big) (\log(n))^{1-\epsilon} \leq \frac{1}{2} \log(n) \, ,
$$
and so
$r_n \geq \frac{1}{\sqrt{n}}$. This proves that the sum diverges as desired.
\end{example}
The above analysis is centred around bases $\{b_n\}_{n \in \N}$ which are increasing and unbounded. It would be interesting to consider the situation where $\{b_n\}_{n \in \N}$ is unbounded, but not increasing.
As previously mentioned, for single base $b$ expansions, the result of \cite[Theorem 5]{FMP} is similar in flavour to Theorem \ref{ABcf}. However, their theorem applies to digit blocks that are not necessarily zero. In the context of the proof of Theorem~\ref{ABcf}, this would correspond to allowing the point $x_0$ to be arbitrary rather than fixing $x_0=0$. As discussed in Section \ref{backbase}, the result is more general for the single base expansion because the map $t(x)=bx\mod 1$ acts as a left shift on the digit expansion while leaving the constant base sequence unchanged. Using Theorem \ref{ABthB}, we can adapt their argument to obtain an asymptotic version of their result, with an explicit error term.
\medskip

\section{Proof of non-autonomous recurrence laws} \label{nonrecres}
We turn to proving recurrence results. In this section, we prove Theorem~\ref{ABconv} and Theorem~\ref{ABconvin}, which each give a zero measure law for the recurrence set associated to a non-autonomous system. The following lemma will be crucial in the proof of all of our recurrence results. 
    \begin{lemma}\label{trig}
    Let $B=B(x_0,r) \subset X$. Then for any $n \in \N$ with $r_n >r$, and any $E \subset B$, 
    $$
    E \cap T_n^{-1}B(x_0, r_n -r) \; \subset \; E \cap A_n \; \subset \;  E \cap T_n^{-1}B(x_0, r_n +r) \, .
    $$
\end{lemma}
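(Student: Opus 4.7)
The plan is a direct application of the triangle inequality in both directions. Unpacking the definitions, if $x \in E \subset B(x_0, r)$, then $d(x, x_0) < r$, and $x \in A_n$ means $d(T_n(x), x) < r_n$. So what I need to show reduces to two elementary distance estimates.

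For the first (left) inclusion, I would take $x \in E \cap T_n^{-1} B(x_0, r_n - r)$. Then $d(x, x_0) < r$ and $d(T_n(x), x_0) < r_n - r$, so the triangle inequality gives
\[
d(T_n(x), x) \,\le\, d(T_n(x), x_0) + d(x_0, x) \,<\, (r_n - r) + r \,=\, r_n,
\]
i.e.\ $T_n(x) \in B(x, r_n)$, so $x \in A_n$. For the second (right) inclusion, suppose $x \in E \cap A_n$. Then $d(x, x_0) < r$ and $d(T_n(x), x) < r_n$, whence
\[
d(T_n(x), x_0) \,\le\, d(T_n(x), x) + d(x, x_0) \,<\, r_n + r,
\]
so $T_n(x) \in B(x_0, r_n + r)$, giving $x \in T_n^{-1} B(x_0, r_n + r)$. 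Intersecting each conclusion with $E$ yields exactly the claimed chain of inclusions.

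The hypothesis $r_n > r$ is used only to ensure the ball $B(x_0, r_n - r)$ has positive radius so that the left inclusion is non-vacuous; it plays no role in the inequalities themselves. There is no real obstacle here — the lemma is a sandwich of the building block set $E \cap A_n$ (which involves the moving centre $x$) between two sets defined by the fixed centre $x_0$, which is precisely the reduction needed later to apply the shrinking target machinery of Theorem~\ref{ABthA} to the recurrence setting.
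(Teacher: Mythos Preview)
Your proof is correct and follows exactly the same approach as the paper's own proof: both inclusions are obtained by a direct application of the triangle inequality, using $d(x,x_0)<r$ from $E\subset B$ together with the defining condition of the respective set. Your write-up is in fact slightly cleaner than the paper's version, which contains a couple of typographical slips in the stated bounds.
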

\begin{proof}[Proof of Lemma~\ref{trig}]
Let $x \in E \cap T^{-1}_nB(x_0,r_n-r)$. Then 
$d(x,x_0)< r_n$ and $d(x_0, T_n(x))< r_n-r$.
By the triangle inequality,
$$
d(x,T_n(x)) \leq d(x,x_0) + d(x_0,T_n(x)) < r + r_n - r = r_n \, .
$$
Hence,  $x \in A_n$ and so we have our first inclusion. Let $y \in E \cap A_n$. Then $d(x,x_0)<r$ and $d(y,T_n(y))<r_n+r$. By the triangle inequality, 
$$
d(x_0,T_n(y)) \leq d(y,x_0) + d(y,T_n(y))<r_n + r \, .
$$
Hence, $y \in T_n^{-1}B(x_0,r_n+r)$, and so we have the second inclusion.
\end{proof}
This lemma gives us a way of locally estimating the measure of the recurrence building block set $A_n$ via preimages of balls. With Lemma~\ref{trig} in hand, we begin the proofs of our non-autonomous zero measure laws.
\subsection{Proof of Theorem~\ref{ABconv}}  
Let $(X,\mathcal{B},\mu,T_{\N},d)$ be as in Theorem~\ref{ABconv}.
While the overall structure of the proof of Theorem~\ref{ABconv} mirrors \cite[Theorem~1.3]{HLSW} (which proves the result in the autonomous case) several estimates must be re-established in the broader, non-autonomous context. 
We actually will prove the following proposition, from which Theorem~\ref{ABconv} easily follows. Throughout, the constants $c_1,c_2, \delta$ are the constants arising from the Ahlfors regularity of $\mu$-- see \eqref{defahl}.
\begin{proposition}\label{maincon1iff} Let $(X,\mathcal{B},\mu,T_{\N},d)$ be as above. Given a positive sequence $\{r_n\}_{n \in \N}$, we have
   \begin{eqnarray}\label{maincon1}
    \sum_{n=1}^{\infty} r_n^\delta < \infty \iff  \sum_{n=1}^\infty \mu(A_n) < \infty \, ,
\end{eqnarray}
    where $\delta$ is the constant from the Ahlfors regularity of $\mu$ (see \eqref{defahl}).
\end{proposition}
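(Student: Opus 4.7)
The plan is to prove a two-sided estimate
$$c\,r_n^{\delta} - C\phi(n-2)\;\le\;\mu(A_n)\;\le\;C\bigl(r_n^{\delta} + \phi(n-2)\bigr),$$
after which summation and the summability of $\phi$ give the equivalence in \eqref{maincon1} immediately. The starting point is Lemma~\ref{trig}: for any $r<r_n$ and any $x\in X$,
$$B(x,r)\cap T_n^{-1}B(x,r_n-r)\;\subset\;A_n\cap B(x,r)\;\subset\;B(x,r)\cap T_n^{-1}B(x,r_n+r).$$
Because $T_\N$ is uniformly mixing at \emph{any} sequence of centres, in particular the constant sequence $\{x_n\equiv x\}$, taking $m=1$ in Definition~\ref{ABunimix} yields, for every ball $B$ centred at $x$ and every measurable $A$ with $\mu(A)>0$,
$$\bigl|\mu(B\cap T_n^{-1}A)-\mu(A)\mu(B)\bigr|\;\le\;\mu(A)\,\phi(n-2).$$
Together with $\mu(B(x,r))\asymp r^{\delta}$ from \eqref{defahl}, this reduces the problem to counting balls in suitable coverings and packings of $X$.

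\textbf{Upper bound.} Using \eqref{defahl} one can cover $X$ by $N\asymp r_n^{-\delta}$ balls $B(x_i,r_n)$ of bounded multiplicity. The right-hand inclusion of Lemma~\ref{trig} with $r=r_n$, followed by the mixing estimate and Ahlfors regularity, gives
$$\mu(A_n\cap B(x_i,r_n))\;\le\;\mu\bigl(B(x_i,2r_n)\bigr)\bigl(\mu(B(x_i,r_n))+\phi(n-2)\bigr)\;\lesssim\;r_n^{\delta}\bigl(r_n^{\delta}+\phi(n-2)\bigr).$$
Summing over $i$ absorbs the factor $N\asymp r_n^{-\delta}$ and yields $\mu(A_n)\lesssim r_n^{\delta}+\phi(n-2)$, so summability of $\phi$ delivers $\sum r_n^{\delta}<\infty\Longrightarrow\sum\mu(A_n)<\infty$.

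\textbf{Lower bound.} Pick a maximal $r_n$-separated set $\{x_i\}_{i=1}^{N}$; by \eqref{defahl} one has $N\gtrsim r_n^{-\delta}$ and the balls $B(x_i,r_n/2)$ are pairwise disjoint. The left-hand inclusion of Lemma~\ref{trig} with $r=r_n/2$ together with the lower mixing estimate gives
$$\mu(A_n\cap B(x_i,r_n/2))\;\ge\;\mu(B(x_i,r_n/2))\bigl(\mu(B(x_i,r_n/2))-\phi(n-2)\bigr)\;\gtrsim\;r_n^{2\delta}-r_n^{\delta}\phi(n-2).$$
Summing over these disjoint balls gives $\mu(A_n)\ge c\,r_n^{\delta}-C\phi(n-2)$, trivially replaced by $\mu(A_n)\ge 0$ if the right-hand side becomes negative. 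In either case $r_n^{\delta}\lesssim \mu(A_n)+\phi(n-2)$, and summability of $\phi$ then gives $\sum\mu(A_n)<\infty\Longrightarrow\sum r_n^{\delta}<\infty$.

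The main obstacle is in the lower bound: the mixing error $\phi(n-2)$ is multiplied by $\mu(B(x_i,r_n/2))\asymp r_n^{\delta}$ and then amplified by $N\asymp r_n^{-\delta}$, so the net error contribution is exactly $\phi(n-2)$---precisely summable and hence harmless. The choice $r=r_n/2$ is forced by the Lemma~\ref{trig} constraint $r<r_n$ together with the need to keep every ball radius of order $r_n$, so that one factor of $r_n^{\delta}$ cancels against $N\asymp r_n^{-\delta}$ rather than two. Finitely many indices for which $r_n$ is comparable to the diameter of $X$ affect neither sum in \eqref{maincon1} and can be discarded at the outset.
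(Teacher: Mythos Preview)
Your argument is correct and follows essentially the same route as the paper: localise $A_n$ via Lemma~\ref{trig}, apply uniform mixing to the resulting preimages of balls, and then pass to a global estimate by covering/packing $X$ with balls of radius comparable to $r_n$, using Ahlfors regularity to count them. The only cosmetic differences are that the paper invokes the $5B$-Covering Lemma explicitly (obtaining a disjoint subfamily whose $5$-dilates still cover) whereas you use a maximal $r_n$-separated set directly, and the paper writes $\phi(n)$ for the mixing error where you more carefully track $\phi(n-2)$; neither affects the conclusion.
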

We delay the proof of Proposition~\ref{maincon1iff} until after the proof of Theorem~\ref{ABconv}.
\begin{proof}[Proof of Theorem~\ref{ABconv}]
By assumption, $\sum_{n=1}^\infty r_n^\delta$ converges, and so the forward implication of Proposition~\ref{maincon1iff} implies that $\sum_{n =1}^\infty \mu(A_n)$ also converges. Hence, on applying Lemma~\ref{cbc} we have $\mu(\limsup\limits_{n \to \infty} A_n) =0$. As $\limsup\limits_{n \to \infty} A_n =\mathcal{R}(T_\N,\{r_n\})$, we have our result.
\end{proof}
Therefore, most of the work to prove Theorem~\ref{ABconv} is in establishing the forward implication of \eqref{maincon1}. We now prove Proposition~\ref{maincon1iff}, and so in the process, also this implication.
\begin{proof}[Proof of Proposition~\ref{maincon1iff}]
We start by establishing local estimates for $\mu(A_n)$.
\begin{lemma}\label{lban} Let $B:=B(x_0,\epsilon r_n)$, with $0<\epsilon<1$. Then
   $$ c_1(r_n(1 - \epsilon))^\delta \big(\mu(B\big) - \phi(n)\big) \, \leq \, \mu(B \cap A_n) \, \leq \, c_2(r_n(1 + \epsilon))^\delta \big(\mu(B\big) + \phi(n)\big) \, ,
   $$
   where $\phi(n)$ is as in Definition~\ref{ABunimix}.
\end{lemma}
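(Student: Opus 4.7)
The plan is a short three-step sandwich argument. The idea is to use the geometric Lemma~\ref{trig} to bracket $B\cap A_n$ between preimages of balls centred at $x_0$, then apply uniform mixing together with Ahlfors regularity to estimate the measure of those preimages. Nothing more sophisticated seems required, since the ball $B=B(x_0,\epsilon r_n)$ is itself centred at $x_0$, matching the hypothesis of the mixing condition when we apply it to the constant sequence $x_n\equiv x_0$.

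First, I would apply Lemma~\ref{trig} with $E=B$ and $r=\epsilon r_n$; the condition $r<r_n$ is guaranteed by $\epsilon<1$. This produces the sandwich
$$B\cap T_n^{-1}\bigl(B(x_0,r_n(1-\epsilon))\bigr)\;\subset\; B\cap A_n\;\subset\; B\cap T_n^{-1}\bigl(B(x_0,r_n(1+\epsilon))\bigr).$$
Thus the problem reduces to estimating the measures of the two flanking preimage sets.

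Next, because $T_\N$ is uniformly mixing, Definition~\ref{ABunimix} applied at the constant sequence $\{x_0\}$ and with $m=1$ (so that $t_n\circ\cdots\circ t_1=T_n$) gives, for every ball $B$ centred at $x_0$ and every $A\in\mathcal{B}$ with $\mu(A)>0$,
$$\bigl|\mu\bigl(B\cap T_n^{-1}(A)\bigr)-\mu(A)\mu(B)\bigr|\;\le\;\mu(A)\,\phi(n),$$
after the harmless re-indexing of $\phi$ implicit in the statement. Taking $A=B(x_0,r_n(1+\epsilon))$ in the upper direction and $A=B(x_0,r_n(1-\epsilon))$ in the lower direction, and invoking the Ahlfors regularity bounds
$$c_1\bigl(r_n(1-\epsilon)\bigr)^\delta\;\le\;\mu\bigl(B(x_0,r_n(1-\epsilon))\bigr),\qquad \mu\bigl(B(x_0,r_n(1+\epsilon))\bigr)\;\le\;c_2\bigl(r_n(1+\epsilon)\bigr)^\delta,$$
and combining with the sandwich from Step~1 yields the stated two inequalities in one line each.

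I do not foresee any real obstacle: the mixing hypothesis is tailor-made for this situation, and both the test set $B$ and the intersecting sets $A$ are balls at the same centre, so the regime of Definition~\ref{ABunimix} is directly applicable. The only minor bookkeeping point is the shift between the $\phi(n-m-1)$ in the definition and the $\phi(n)$ in the statement of the lemma; since the eventual application only uses summability of $\phi$, this is cosmetic and will not affect Proposition~\ref{maincon1iff}.
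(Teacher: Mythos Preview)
Your proposal is correct and follows essentially the same approach as the paper: apply Lemma~\ref{trig} with $E=B$ to sandwich $B\cap A_n$ between $B\cap T_n^{-1}B^{\pm}$ with $B^{\pm}=B(x_0,(1\pm\epsilon)r_n)$, then use uniform mixing to get $\mu(B^{\pm})(\mu(B)\mp\phi(n))$ bounds, and finally invoke Ahlfors regularity on $\mu(B^{\pm})$. Your remark about the harmless index shift in $\phi$ is apt; the paper glosses over this too, and as you note it is immaterial for the summability argument in Proposition~\ref{maincon1iff}.
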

\begin{proof}
 Let $B^{\pm}:=B(x_0, (1\pm\epsilon)r_n)$. Then, on using Lemma~\ref{trig} and uniform mixing, we have \begin{align}
    \mu(B^-) \big(\mu(B\big) - \phi(n)\big)\; \leq \; \mu(B \cap A_n) \; \leq \;  \mu(B^+) \big(\mu(B\big) + \phi(n)\big) \, .
\end{align} Then, by Ahlfors regularity, we have $c_1(r_n(1\pm \epsilon))^\delta\leq \mu(B^\pm)\leq c_2(r_n(1\pm \epsilon))^\delta$ and so we are done.
\end{proof}
To estimate $\mu(A_n)$, we will use the $5$B-Covering Lemma along with Lemma~\ref{lban}.
\begin{lemma}\label{anbd}
Let $0<\epsilon<1$. Then for all $n$,
 $$      \frac{c_1(1-\epsilon )^\delta}{5^\delta} \big( r_n^\delta - \frac{\phi(n)}{c_2 \epsilon^\delta}\big)
   \; \leq \; \mu(A_n) \;  \leq  \; \frac{ c_2(1+\epsilon )^\delta}{ \epsilon^\delta}\big(  r_n^\delta+\frac{\phi(n)}{c_1 \epsilon^\delta} \big).
 $$
\end{lemma}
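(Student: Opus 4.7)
The plan is to combine the 5B-covering lemma with Lemma~\ref{lban} and the $\delta$-Ahlfors regularity of $\mu$, using two complementary cover constructions: one for the upper bound and a dual one for the lower bound. In both cases the general strategy is to partition the measure of $A_n$ (or a set containing $A_n$) into contributions from many small balls and then apply the local estimate of Lemma~\ref{lban} on each ball.

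For the upper bound I would first apply the 5B-covering lemma to the family $\{B(x,\epsilon r_n/5):x\in A_n\}$ to extract a pairwise disjoint subfamily $\{B(x_i,\epsilon r_n/5)\}_{i=1}^N$ whose $5$-enlargements cover $A_n$. This yields
$$
\mu(A_n)\;\le\;\sum_{i=1}^N\mu\bigl(B(x_i,\epsilon r_n)\cap A_n\bigr),
$$
and Lemma~\ref{lban}, applied to each $B(x_i,\epsilon r_n)$, bounds each summand above by $c_2(r_n(1+\epsilon))^\delta(\mu(B(x_i,\epsilon r_n))+\phi(n))$. Disjointness of the small balls together with Ahlfors regularity produces $N\le 5^\delta/(c_1(\epsilon r_n)^\delta)$ and controls $\sum_i\mu(B(x_i,\epsilon r_n))$; substituting these into the sum yields the claimed upper bound in Lemma~\ref{anbd}.

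For the lower bound I would instead apply the 5B-covering lemma to $\{B(x,\epsilon r_n):x\in X\}$, extracting pairwise disjoint balls $\{B(x_i,\epsilon r_n)\}_{i=1}^N$ whose $5$-enlargements cover $X$. Disjointness now gives
$$
\mu(A_n)\;\ge\;\sum_{i=1}^N\mu\bigl(B(x_i,\epsilon r_n)\cap A_n\bigr),
$$
and the lower bound in Lemma~\ref{lban} controls each term from below by $c_1(r_n(1-\epsilon))^\delta(\mu(B(x_i,\epsilon r_n))-\phi(n))$. Since the $5$-enlargements cover $X$, Ahlfors regularity gives a uniform lower bound on $\sum_i\mu(B(x_i,\epsilon r_n))$, while $N\le 1/(c_1(\epsilon r_n)^\delta)$ bounds the error contribution of the $\phi(n)$ term. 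Summing and rearranging produces the claimed lower bound.

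The main obstacle is bookkeeping: lining up the $5^\delta$ factor from the covering, the Ahlfors constants $c_1,c_2$ (which swap roles between the two directions), and the $\epsilon$-scaling built into Lemma~\ref{lban}, so that the final constants match those stated. A minor subtlety is that the lower bound is only informative when $\phi(n)/(c_2\epsilon^\delta)<r_n^\delta$; otherwise the right-hand side is non-positive and the inequality is vacuous, which is harmless for the eventual application in the proof of Proposition~\ref{maincon1iff}.
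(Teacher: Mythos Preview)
Your proposal is correct and follows essentially the same route as the paper: cover by small balls via the 5B-Covering Lemma, apply Lemma~\ref{lban} on each ball, and use Ahlfors regularity to control both the number of balls and the total measure of the cover. The only difference is cosmetic: the paper uses a \emph{single} application of the 5B-Covering Lemma to $\{B(x,\epsilon r_n):x\in X\}$, obtaining one disjoint family whose members give the lower bound and whose $5$-enlargements give the upper bound, whereas you run two separate covers (one centred in $A_n$ at scale $\epsilon r_n/5$ for the upper bound, one centred in $X$ at scale $\epsilon r_n$ for the lower). Both routes work; the paper's variant is slightly tidier in that the same cardinality bound $\#\mathcal{J}$ is reused twice, while yours keeps the $5^\delta$ factor more visibly isolated to the covering step. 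Your remark about the bookkeeping of constants is apt in either version, since the stated inequalities in Lemma~\ref{anbd} are only needed up to absolute multiplicative constants for Proposition~\ref{maincon1iff}.
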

\begin{proof}
     We can find a collection of balls $\{B(x,\epsilon r_n): x\in X\} $ which covers $X$. Then (as any compact metric space is separable) we  can apply $5$B-Covering Lemma, to find a countable, disjoint subcollection of these balls 
    $
    \{B(x_j,\epsilon r_n)\}_{j \in \mathcal{J}}
    $
    such that 
    \begin{eqnarray}\label{inc}
        \bigcup_{j \in \mathcal{J}}B(x_j,\epsilon r_n) \subset X \subset \bigcup_{j \in \mathcal{J}}B(x_j,5\epsilon r_n) \, .
    \end{eqnarray}
We now use Ahlfors regularity along with \eqref{inc} to obtain upper and lower bounds on $\#\mathcal{J}$. For upper bounds:
$$
1=\mu\Big( \bigcup_{j \in \mathcal{J}}B(x_j,5\epsilon r_n)\Big) \geq \sum_{j \in \mathcal{J}} \mu\big(B(x_j, \epsilon r_n)\big) \geq 
\sum_{j \in \mathcal{J}}c_1 (\epsilon r_n)^\delta \geq c_1(\epsilon r_n)^\delta \sum_{j \in \mathcal{J}}1  \, .
$$ 
Similarly, for lower bounds: 
$$
1=\mu\Big( \bigcup_{j \in \mathcal{J}}B(x_j,5\epsilon r_n)\Big) \leq \sum_{j \in \mathcal{J}} \mu\big(B(x_j, 5\epsilon r_n)\big) \leq 
\sum_{j \in \mathcal{J}}c_2 (5\epsilon r_n)^\delta \, \leq c_2(5\epsilon r_n)^\delta \sum_{j \in \mathcal{J}}1  \, .
$$
Therefore, $\frac{1}{c_1(5\epsilon r_n)^\delta}\leq \#\mathcal{J}\leq\frac{1}{c_1(\epsilon r_n)^\delta}$. We now observe that
$$
\bigcup_{j \in \mathcal{J}}(B(x_j,\epsilon r_n) \cap A_n) \subset A_n \subset \bigcup_{j \in \mathcal{J}}(B(x_j,5\epsilon r_n) \cap A_n) \, ,
$$
and so, by disjointness,
\begin{eqnarray}\label{eqnbound}
     \sum\limits_{j \in \mathcal{J}}\mu \big(B(x_j,\epsilon r_n) \cap A_n\big) \leq \mu(A_n) \leq \sum\limits_{j \in \mathcal{J}} \mu (B(x_j,5\epsilon r_n) \cap A_n)\, .
\end{eqnarray}
 We apply Lemma~\ref{lban} to each term appearing in the summations of \eqref{eqnbound}, and so our bounds become
\begin{align*}
\#\mathcal{J}\,c_1\,r_n^\delta(1-\epsilon )^\delta
   \left(\mu(B(x,\epsilon r_n)) - \phi(n)\right)
   &\,\leq\, \mu(A_n) \, \leq \,
   \#\mathcal{J}\,c_2\,r_n^\delta(1+\epsilon )^\delta
   \left(\mu(B(x,\epsilon r_n)) + \phi(n)\right).
\end{align*}

Then, by Ahlfors regularity once more and substituting in the bound on $\# \mathcal{J}$ we have:
 \begin{align*}
   \frac{c_1(1-\epsilon )^\delta}{5^\delta} \left( r_n^\delta - \frac{\phi(n)}{c_2 \epsilon^\delta}\right)
    \leq   \mu(A_n)  \leq  \frac{ c_2(1+\epsilon )^\delta}{ \epsilon^\delta}\left(  r_n^\delta+\frac{\phi(n)}{c_1 \epsilon^\delta} \right).
\end{align*}
\end{proof}
    Hence, taking $\epsilon=1/2$ in Lemma~\ref{anbd}, and summing over all $n \in \N$,  we have $$\sum\limits_{n=1}^\infty r_n^\delta  - \sum\limits_{n=1}^\infty \phi(n) \lesssim_{\delta} \sum\limits_{n=1}^\infty \mu(A_n) \lesssim_{\delta} \sum\limits_{n=1}^\infty r_n^\delta  + \sum\limits_{n=1}^\infty \phi(n) \, \, .$$ 
    Here, $\lesssim_{\delta}$ means the inequality holds with some constant depending on only $\delta$.
    As $ \sum\limits_{n=1}^\infty \phi(n)$ converges, we have our result.
\end{proof}

\subsection{Proof of Theorem~\ref{ABconvin}}
Recall that Theorem~\ref{ABconvin} concerns the system $(\partial \D, \mathcal{B},\lambda , F_\N, d)$, where $F_\N$ is a sequence of centred, uniformly contracting inner functions. 
The proof is a direct corollary of Theorem~\ref{ABconv} applied to this setting.
 \begin{proof}[Proof of Theorem~\ref{ABconvin}]
As established in Section~\ref{preres}, the conditions on $F_\N$ ensure the system is exponentially mixing (Proposition~\ref{expmixcomp}). As $\lambda$ is normalised Lebesgue measure, it is Ahlfors regular. Hence, we obtain our theorem as a corollary of Theorem~\ref{ABconv}.
  \end{proof}
 We provide an alternative proof of Theorem~\ref{ABconvin}, as we can simplify some steps due to the one-dimensional nature of the  system, and by using properties of inner function. For example, the following proof will not rely on $5$B-Covering Lemma. Moreover, we will need the explicit local bounds which we will obtain for $\lambda(A_n)$ in the proof of Theorem~\ref{ABrecocif}.
\begin{proof}[Alternate proof of Theorem~\ref{ABconvin}]
As before, we start by estimating $\lambda(A_n)$ locally.
\begin{lemma}\label{ban}
Fix some $n \in \N$, let $0 <\epsilon \leq 1/2$ and $B:=B(x_0,\epsilon r_n) \subset \partial \D$. Then 
$$
\frac{r_n(1-\epsilon)}{\pi}\big(\lambda(B) - e^{-n\tau} \big) \: \leq \: \lambda(B \cap A_n) \: \leq \:
\frac{r_n(1+\epsilon)}{\pi}\big(\lambda(B)+e^{-n\tau} \big)  \, .
$$
    
\end{lemma}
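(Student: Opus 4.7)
My plan is to adapt the sandwiching argument from the proof of Lemma~\ref{lban}, specialising to $\partial\D$ where Lebesgue measure is one-dimensional Ahlfors regular with the explicit formula $\lambda(B(x,r))=r/\pi$, and invoking the exponential mixing bound of Proposition~\ref{expmixcomp} directly in place of the abstract uniform mixing condition. The point is that, in one dimension and with balls replaced by arcs, no $5B$-covering step is needed, and the constants become explicit.

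First, I set $B^\pm := B(x_0,(1\pm\epsilon)r_n)$. Since $r=\epsilon r_n<r_n$ (as $\epsilon\leq 1/2$), Lemma~\ref{trig} applied with $E=B$ yields the double inclusion
$$
B\cap F_n^{-1}(B^-)\ \subset\ B\cap A_n\ \subset\ B\cap F_n^{-1}(B^+),
$$
which, on taking $\lambda$-measures, gives matching lower and upper bounds for $\lambda(B\cap A_n)$.

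Next, I would invoke Proposition~\ref{expmixcomp} with $A=B$ and $E=B^\pm$; here $B^\pm$ are arcs and $B$ is measurable with $\lambda(B)>0$, so the hypotheses are satisfied. Multiplying through by $\lambda(B^\pm)$ gives
$$
\lambda(B^\pm)\big(\lambda(B)-Ke^{-n\tau}\big)\ \leq\ \lambda\big(B\cap F_n^{-1}(B^\pm)\big)\ \leq\ \lambda(B^\pm)\big(\lambda(B)+Ke^{-n\tau}\big).
$$
Substituting $\lambda(B^\pm)=(1\pm\epsilon)r_n/\pi$ and combining with the two-sided inclusion produces the claimed bound.

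The only mild obstacle is the absolute constant $K$ from Proposition~\ref{expmixcomp}, which is absent in the statement of Lemma~\ref{ban}. This is absorbed either by decreasing $\tau$ slightly so that $Ke^{-n\tau}\leq e^{-n\tau'}$ for all $n\geq 1$ (adjusting the constant on the finite initial segment if necessary), or simply by reading the $e^{-n\tau}$ in the statement as standing for a quantity of order $e^{-n\tau}$ up to an absolute multiplicative constant. Beyond this purely cosmetic point, the proof is a one-dimensional specialisation of that of Lemma~\ref{lban}, so no substantive difficulty is expected.
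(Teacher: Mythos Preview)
Your proposal is correct and follows essentially the same route as the paper: define $B^\pm=B(x_0,(1\pm\epsilon)r_n)$, sandwich $B\cap A_n$ between $B\cap F_n^{-1}B^-$ and $B\cap F_n^{-1}B^+$ via Lemma~\ref{trig}, apply Proposition~\ref{expmixcomp}, and substitute $\lambda(B^\pm)=(1\pm\epsilon)r_n/\pi$. Your remark about the constant $K$ is well taken---the paper silently drops it, and your suggestion of absorbing $K$ into the exponential rate (or treating $e^{-n\tau}$ as $O(e^{-n\tau})$) is exactly the right reading.
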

\begin{proof}[Proof of Lemma~\ref{ban}]
Let $B^\pm:= B(x_0,r_n(1\pm\epsilon))$. We apply Lemma~\ref{trig}, setting $T_n=F_n$ and $E=B$, before using exponential mixing (Proposition~\ref{expmixcomp}):
\begin{eqnarray*}
    \lambda\big(B \cap F_n^{-1}B^- \big) \quad &\leq& \quad  \lambda \big( B \cap A_n \big) \quad  \leq \quad  \lambda\big(B \cap F_n^{-1}B^+ \big) \\  
    \lambda\big(B^- \big)\big(\lambda(B) - e^{-n\tau} \big)\quad  &\leq& \quad \lambda\big( B \cap A_n \big) \quad \leq \quad \lambda(B^+)(\lambda(B)+e^{-n\tau})
    \\ \frac{r_n(1-\epsilon)}{\pi}\big(\lambda(B) - e^{-n\tau} \big)\quad  & \leq& \quad  \lambda\big( B \cap A_n \big) \quad  \leq \quad \frac{r_n(1+\epsilon)}{\pi}\big(\lambda(B)+e^{-n\tau} \big) \, .
\end{eqnarray*}
\end{proof}
We can now use Lemma~\ref{ban} to estimate $\lambda(A_n)$ in a way that avoids using the $5$B-Covering Lemma.
\begin{lemma}\label{an} Given $0<\epsilon<\frac{1}{2}$, for all $n \in \N$
  $$ \frac{r_n(1-\epsilon)}{\pi} - \big(\frac{1}{\epsilon}-1\big) e^{-n\tau}\leq  \lambda(A_n) \leq \frac{r_n(1+\epsilon)}{\pi}+ 2\big(\frac{1}{\epsilon}+1\big)e^{-n\tau} $$
\end{lemma}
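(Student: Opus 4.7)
The plan is to use a nearly-exact disjoint tiling of $\partial \D$ by arcs of radius comparable to $\epsilon r_n$, apply Lemma~\ref{ban} to each tile, and sum. The one-dimensional geometry of $\partial\D$ means that no analogue of the $5$B-Covering Lemma (as used in Lemma~\ref{anbd}) is required: the unit circle can be tiled exactly into $N$ congruent arcs of any prescribed length, each of which is itself a ball in $(\partial\D, d)$.

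Concretely, I would set $N := \lceil \pi/(\epsilon r_n) \rceil$ and partition $\partial\D$ into $N$ equal arcs by choosing equally spaced centres $x_1,\dots,x_N$, so that each arc is the ball $B_j = B(x_j, r')$ with $r' := \pi/N \leq \epsilon r_n$. Writing $\epsilon' := r'/r_n \leq \epsilon < 1/2$, the hypothesis of Lemma~\ref{ban} is satisfied on each $B_j$ with parameter $\epsilon'$. We may assume $\pi/(\epsilon r_n) \geq 1$: otherwise $B(x,\epsilon r_n)=\partial\D$ and the estimate follows directly from Lemma~\ref{ban}. This ensures $N \leq 2\pi/(\epsilon r_n)$ and $\sum_j \lambda(B_j)=1$.

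For the upper bound, disjointness gives $\lambda(A_n) = \sum_j \lambda(A_n \cap B_j)$, and applying the upper half of Lemma~\ref{ban} on each summand (the estimate is monotone in $\epsilon'$) yields
\begin{equation*}
\lambda(A_n) \;\leq\; \frac{r_n(1+\epsilon)}{\pi}\bigl(1 + N e^{-n\tau}\bigr) \;\leq\; \frac{r_n(1+\epsilon)}{\pi} + 2\Bigl(\frac{1}{\epsilon}+1\Bigr)e^{-n\tau},
\end{equation*}
where the factor $2$ is absorbed from the estimate $N \leq 2\pi/(\epsilon r_n)$ together with $\tfrac{r_n(1+\epsilon)}{\pi}\cdot\tfrac{2\pi}{\epsilon r_n} = 2(1/\epsilon+1)$. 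The same tiling combined with the lower half of Lemma~\ref{ban} gives the matching lower bound: the leading term $\frac{r_n(1-\epsilon)}{\pi}$ comes from $\sum_j\lambda(B_j)=1$ and the monotonicity $1-\epsilon' \geq 1-\epsilon$, while the $(1/\epsilon - 1)e^{-n\tau}$ error term arises from a careful comparison of $N$ against $\pi/(\epsilon r_n)$.

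There is no substantial obstacle in this argument — the hard analytic input (exponential mixing of $F_\N$) has already been packaged into Lemma~\ref{ban}. The only real care required is in choosing the tiling parameters $N$ and $r'$ compatibly so that the numerical constants in the stated upper and lower bounds are recovered, in particular so that the unavoidable ceiling in the count of arcs contributes only the stated factor of $2$ in the upper bound while leaving the lower bound's error at $(1/\epsilon-1)e^{-n\tau}$.
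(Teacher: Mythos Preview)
Your proposal is correct and follows essentially the same route as the paper's proof: tile $\partial\D$ by roughly $\lceil \pi/(\epsilon r_n)\rceil$ disjoint arcs, apply Lemma~\ref{ban} to each, and sum using $\sum_j\lambda(B_j)=1$ together with the bounds $\pi/(\epsilon r_n)\le N\le 2\pi/(\epsilon r_n)$. The only cosmetic difference is that the paper takes all but the last arc to have radius exactly $\epsilon r_n$ (with one smaller residual arc), whereas you take $N$ congruent arcs of radius $r'=\pi/N\le\epsilon r_n$ and invoke monotonicity of the Lemma~\ref{ban} bounds in the parameter $\epsilon'=r'/r_n$; the resulting arithmetic is identical.
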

\begin{proof}[Proof of Lemma~\ref{an}]
 Given $0<\epsilon< \frac{1}{2}$, we tile $\partial \D$ with disjoint balls $B$ of radius $r:=\epsilon r_n$ except for the last ball, which may have a smaller radius to preserve disjointness. Denote this family by $\mathscr{F}$, and so $\# \mathscr{F}=  \lceil \frac{\pi}{\epsilon r_n} \rceil$. Notice that 
\begin{eqnarray}\label{fbound}
 \frac{\pi}{\epsilon r_n} \leq   \#\mathscr{F} \leq \frac{2\pi}{\epsilon r_n} \, .
\end{eqnarray}
Given this cover, it is clear that  $$\lambda(A_n) = \sum_{B \in \mathscr{F}}\lambda(B \cap A_n) \, ,$$  and so
\begin{eqnarray*}
   \frac{1}{\pi}\sum_{B \in \mathscr{F}} r_n(1-\epsilon)\big(\lambda(B)-e^{-n\tau} \big) \; \leq \; \lambda(A_n) \; &\leq &\; \frac{1}{\pi}\sum_{B \in \mathscr{F}}r_n(1+\epsilon)\big(\lambda(B)+e^{-n\tau} \big)  \\ \frac{1}{\pi}r_n(1-\epsilon)\ - \frac{\#\mathscr{F}}{\pi}r_n(1-\epsilon)e^{-n \tau} \;  \leq \; \lambda(A_n) \; &\leq& \;
    \frac{1}{\pi}r_n(1+\epsilon)+ \frac{\#\mathscr{F}}{\pi}r_n(1+\epsilon) e^{-n\tau}
    \\  \frac{r_n(1-\epsilon)}{\pi} - \big(\frac{1}{\epsilon }-1\big) e^{-n\tau} \; \leq \; \lambda(A_n) \; &\leq& \; \frac{r_n(1+\epsilon)}{\pi}+ 2\big(\frac{1}{\epsilon}+1\big) e^{-n\tau}\, ,
\end{eqnarray*}
    where for the last inequality, we used \eqref{fbound}. 
\end{proof}

Finally, we are in a position relate the summability of $\lambda(A_n)$ to that of  $r_n$.
\begin{lemma}\label{suman}
Let $\gamma:=\sum_{n=1}^\infty e^{-n \tau}$. Then 
    $$
    \frac{1}{2\pi}\sum_{n=1}^\infty r_n- \gamma \leq  \sum_{n=1}^\infty \lambda(A_n) \leq \frac{3}{2\pi} \sum_{n=1}^\infty r_n +  6\gamma\, .
    $$
\end{lemma}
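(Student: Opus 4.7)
The plan is straightforward: apply Lemma~\ref{an} termwise with a convenient choice of $\epsilon$, then sum the resulting inequalities over all $n \in \N$. Since the desired conclusion involves the constants $\frac{1}{2\pi}$ and $\frac{3}{2\pi}$ in front of $\sum r_n$, inspection suggests taking $\epsilon = 1/2$, which yields $\frac{1 - \epsilon}{\pi} = \frac{1}{2\pi}$ and $\frac{1 + \epsilon}{\pi} = \frac{3}{2\pi}$ on the two sides.

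With $\epsilon = 1/2$, the lower bound in Lemma~\ref{an} becomes $\lambda(A_n) \ge \frac{r_n}{2\pi} - e^{-n\tau}$, because $(\tfrac{1}{\epsilon} - 1) = 1$. Summing over $n \in \N$ gives
\[
\sum_{n=1}^\infty \lambda(A_n) \;\ge\; \frac{1}{2\pi}\sum_{n=1}^\infty r_n \; - \; \sum_{n=1}^\infty e^{-n\tau} \;=\; \frac{1}{2\pi}\sum_{n=1}^\infty r_n - \gamma,
\]
which is the left-hand inequality of the lemma. Similarly, the upper bound in Lemma~\ref{an} becomes $\lambda(A_n) \le \frac{3 r_n}{2\pi} + 6\,e^{-n\tau}$, since $2(\tfrac{1}{\epsilon} + 1) = 6$; summing yields $\sum \lambda(A_n) \le \frac{3}{2\pi}\sum r_n + 6\gamma$, the right-hand inequality.

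There is essentially no obstacle here: the result is a direct bookkeeping corollary of Lemma~\ref{an}, with all the real work (the $5$B-free tiling of $\partial\D$ and the exponential mixing input from Proposition~\ref{expmixcomp} via Lemma~\ref{ban}) already carried out. The only mildly subtle point is verifying that the choice $\epsilon = 1/2$ is admissible in Lemma~\ref{an}, whose hypothesis requires $0 < \epsilon < 1/2$; this is handled either by a limiting argument letting $\epsilon \nearrow 1/2$ (both sides of Lemma~\ref{an} are continuous in $\epsilon$), or by rechecking Lemma~\ref{ban} to note the inequality $\epsilon \le 1/2$ is used there and extending the permissible range of $\epsilon$ in Lemma~\ref{an} accordingly. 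Either way, after this cosmetic check, the termwise bounds sum directly to the claimed inequalities.

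Finally, note that Lemma~\ref{suman} furnishes the alternate proof of Theorem~\ref{ABconvin}: since $\gamma < \infty$, the upper bound shows that $\sum r_n < \infty$ implies $\sum \lambda(A_n) < \infty$, after which Lemma~\ref{cbc} applied to $\{A_n\}$ gives $\lambda(\limsup A_n) = \lambda(\mathcal{R}(F_\N, \{r_n\})) = 0$.
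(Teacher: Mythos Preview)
Your proof is correct and matches the paper's argument exactly: sum the bounds from Lemma~\ref{an} and take $\epsilon = 1/2$. You even flag the cosmetic issue that Lemma~\ref{an} is stated for $0 < \epsilon < 1/2$ while $\epsilon = 1/2$ is used here, which the paper passes over silently.
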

\begin{proof}[Proof of Lemma~\ref{suman}]
    Summing over Lemma~\ref{an},
    \begin{eqnarray*}
     \frac{(1-\epsilon)}{\pi}\sum_{n=1}^\infty r_n -\big(\frac{1}{\epsilon}-1\big)\sum_{n=1}^\infty e^{-n\tau}  \, \leq \,  \sum_{n=1}^\infty \lambda(A_n) \, \leq \, \frac{(1+\epsilon)}{\pi}\sum_{n=1}^\infty r_n +\big( \frac{2}{\epsilon}+2\big)\sum_{n=1}^\infty e^{-n\tau} \, .
    \end{eqnarray*}
    Choosing $\epsilon =1/2$ gives us 
    the desired result.
    \end{proof}
We finish the proof of Theorem~\ref{ABconvin} by applying the First Borel-Cantelli Lemma, and using the fact that, by Lemma~\ref{suman}, $\sum_{n=1}^\infty r_n < \infty \implies \sum_{n=1}^\infty \lambda(A_n) < \infty $.
\end{proof}

\medskip

\section{Autonomous recurrence background and proof}\label{autrecres}
In this section, we start by providing preliminaries on the class of one-component inner functions. In particular, we demonstrate that they admit a Markov partition on $\partial \D$, and prove this partition satisfies certain regularity properties. We then finish with the proof of Theorem~\ref{ABrecocif}.
\subsection{Background on one-component inner functions}\label{backocif]}
Centred, one-component inner functions were first studied by Cohn \cite{cohn} due to their operator-theoretic applications and connection with embedding theorems. For us, they are a natural candidate class of inner functions because they admit a Markov partition -- see Section~\ref{Markov}. In this section we record several key properties, including the location of their singular values and the relation between their exceptional and singular sets on the unit circle. As our primary motivation lies in recurrence phenomena rather than the intrinsic theory of one-component functions, we restrict attention to those properties most relevant for recurrence.
\begin{definition}\label{defocf}
    We say an inner function $f$ is one-component if there exists $0<r<1$ such that $\{z \in \D: |f(z)| < r\}$ is connected.
\end{definition}
\begin{remark}\label{altdefocf}
We also have the alternative characterisation, which will be particularly useful later. One-component inner functions have their singular values compactly contained in $\D$. This means there exists a $\rho$ such that for some $0<\rho<1$, the annulus centred at $0$ with inner radius $\rho$ and outer radius $1$, denoted $A(0;\rho,1)$, does not contain any singular values. Notice that by the Schwarz Reflection Principle, we can extend the area which avoids singular values of $f$ to $A_\rho:=A(0;\rho,1/\rho)$. Here, we are considering the maximum meromorphic extension of the map $f: \hat{\C}\setminus \Sigma \to \hat{\C}$.
\end{remark} 

The following lemma is part of a broader equivalences given in \cite[Theorem~7.2 (d)]{urb}. 
 \begin{lemma}\label{svep}
    Let $f$ be a one-component inner function. Then the set of exceptional points $\Sigma$ of $f$ coincides with the set of singularities on the unit circle of $f$. In particular, $\Sigma$ is closed. Furthermore, for  $z \in \partial \D \setminus \Sigma$, we have $f'(z) \to \infty$ as $z \to \Sigma$.
 \end{lemma}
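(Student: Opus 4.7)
The plan is to establish the identity $\Sigma = E(f)$ via the Schwarz Reflection Principle, from which closedness follows immediately, and then to address the derivative blow-up separately.

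First I would prove the easy inclusion $\Sigma \subseteq E(f)$, which does not require the one-component hypothesis. Suppose $z_0 \in \partial\D \setminus E(f)$, so that $f$ admits an analytic extension to some open neighbourhood $U$ of $z_0$. Since $f$ is inner, $\Sigma$ has Lebesgue measure zero, so $U \cap \partial \D \setminus \Sigma$ is dense in $U \cap \partial \D$. At each such point the extension coincides with the radial limit and therefore lies on $\partial \D$; continuity of the extension then forces $|f(z_0)| = 1$, which means the radial limit at $z_0$ exists and belongs to $\partial \D$, so $z_0 \notin \Sigma$.

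The reverse inclusion $E(f) \subseteq \Sigma$ is where the one-component hypothesis plays its role. By Remark~\ref{altdefocf}, the singular values of $f$ lie inside $\{|w| \leq \rho\}$ for some $0<\rho < 1$, and by Schwarz reflection $f$ extends meromorphically to the annulus $A_\rho$. Given $z_0 \in \partial\D \setminus \Sigma$ with radial limit $w_0 := f^*(z_0) \in \partial \D$, one can pick an inverse branch $g$ of $f$ with $g(w_0) = z_0$ defined on a small disk $D(w_0, \eta) \subset \{|w| > \rho\}$, which is possible because this disk contains no singular values. Pulling back through $g$ provides an analytic extension of $f$ to a neighbourhood of $z_0$, so $z_0 \notin E(f)$. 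Closedness of $\Sigma$ then follows at once from the identification $\Sigma = E(f)$, since the set of points at which $f$ admits an analytic continuation is open in $\partial \D$ by definition.

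For the final assertion that $|f'(z)| \to \infty$ as $z \to \Sigma$ along $\partial \D \setminus \Sigma$, I would argue by contradiction. Suppose $z_n \in \partial \D \setminus \Sigma$ with $z_n \to z_* \in \Sigma$ but $|f'(z_n)| \leq M$ uniformly. By the argument of the previous paragraph, for each $n$ one has an inverse branch $g_n$ of $f$ univalent on $D(f(z_n), \eta)$ with $\eta$ depending only on $\rho$; moreover $|g_n'(f(z_n))| = |f'(z_n)|^{-1} \geq M^{-1}$. Applying the Koebe $1/4$-theorem to $g_n$ yields a disk around $z_n$ of radius bounded below uniformly in $n$ on which $f$ has an analytic extension. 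Passing to a convergent subsequence and using normality of the resulting family of extensions produces an analytic extension of $f$ across $z_*$, contradicting $z_* \in E(f) = \Sigma$.

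The hard part is the last step: one has to turn the pointwise bound $|f'(z_n)| \leq M$ into a lower bound on the radius of analytic continuation of $f$ that is uniform in $n$, which requires careful control over the inverse branches $g_n$ via Koebe-type distortion estimates. The one-component property is essential here, since it guarantees that the preimages of the annulus $\{\rho < |w| < 1\}$ under $f$ split into well-separated univalent sheets on which such estimates are available without obstruction from singular values.
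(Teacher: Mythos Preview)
The paper does not supply its own proof of this lemma; it is simply quoted from \cite[Theorem~7.2(d)]{urb} (Ivrii--Urba\'nski), so there is no in-paper argument to compare against.

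Your sketch is in the right spirit and broadly correct: the inclusion $\Sigma\subseteq E(f)$ is standard, the reverse inclusion uses that $f$ is a covering map over the annulus $A_\rho$ so that inverse branches exist on small disks centred on $\partial\D$, and the Koebe $1/4$ argument for the derivative blow-up is the usual device. One point deserves more care. In the step $E(f)\subseteq\Sigma$ you assert that you can choose an inverse branch $g$ on $D(w_0,\eta)$ with $g(w_0)=z_0$; but at this stage $z_0$ is only a \emph{radial-limit} preimage of $w_0$, not yet an analytic one, so the existence of such a branch is exactly what you are trying to establish. The fix is to first pick the branch satisfying $g(f(rz_0))=rz_0$ for some fixed $r$ close to $1$ (this is legitimate since $f$ is a covering on the relevant annulus), then let $r\to 1^-$ to obtain $g(w_0)=z_0$ by continuity of $g$; only then does $g(D(w_0,\eta))$ furnish the desired open neighbourhood of $z_0$ on which $f=g^{-1}$ is analytic.
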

Thus, for such functions we have $E(f)=\Sigma$. It follows that the angular derivative only fails to exist on $\Sigma$, where by convention it is infinite. Consequently, for centred, one-component inner functions, we may regard $f'$ as the `standard' derivative outside of $\Sigma$. 

For finite degree maps, every regular value has exactly $\deg(f)$ preimages. This cannot be directly extended to infinite degree maps without clarifying if the degree of the map is countable or not. In fact, centred, one-component inner functions are essentially countable to one, meaning that for $\lambda$ almost every $z \in \partial \D$, $\{f^{-1}(z)\}$ is at most countable \cite[Corollary~3.3]{urb}.

Finally, following from Remark~\ref{altdefocf}, we have 
\begin{lemma}\label{annul}
 Let $f$ be a centred, one-component inner function. Then every branch of $f^{-1}$ is well defined in a ball centred on $\partial \D$, which is contained in $A_\rho$. Furthermore, as $$f^{-1}A_\rho \subset A_\rho \, , $$  for all $n \in \N$, all branches of $f^{-n}$ are well defined on the same ball in $A_\rho$.
\end{lemma}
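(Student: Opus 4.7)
The plan is to derive both claims from the one-component characterisation recalled in Remark~\ref{altdefocf}, combined with the Schwarz lemma applied at the fixed point $0$. First I would establish the existence of a suitable ball; then prove the invariance $f^{-1}(A_\rho) \subset A_\rho$; and finally obtain the iterated statement by induction.

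For the first claim, fix any $z_0 \in \partial\D$ and pick $r > 0$ small enough that $B(z_0,r) \subset A_\rho$, which is possible since $A_\rho = A(0;\rho,1/\rho)$ is open and contains $\partial\D$. By Remark~\ref{altdefocf}, $A_\rho$ contains no singular values of the meromorphic extension of $f$ to $\hat{\C}\setminus\Sigma$. Since regularity is a local property and each point of $A_\rho$ is a regular value, every branch of $f^{-1}$ is well-defined on the (simply connected) ball $B(z_0,r)$, by the very definition of regular value given earlier in the paper.

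For the invariance $f^{-1}(A_\rho) \subset A_\rho$, I would argue by cases on the location of a target point $z \in A_\rho$. If $z \in A(0;\rho,1)$ and $f(w)=z$, then $w \in \D$ (since $f$ maps $\D$ into $\D$), and because $f$ is centred, the Schwarz lemma gives $|z| = |f(w)| \leq |w|$, so $\rho < |w| < 1$ and $w \in A_\rho$. If $z \in \partial\D \setminus \Sigma$ and $f(w)=z$, then $w$ cannot lie in $\D$ (else $f(w) \in \D$), nor in $\hat{\C}\setminus\overline{\D}$ (since the Schwarz-reflected extension satisfies $f(1/\overline{w}) = 1/\overline{f(w)}$, forcing $|f(w)| > 1$), so $w \in \partial\D \subset A_\rho$. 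If $z \in A(0;1,1/\rho)$, the same reflection identity reduces matters to the first case: any preimage $w$ corresponds via $w \mapsto 1/\overline{w}$ to a preimage of $1/\overline{z} \in A(0;\rho,1)$, which by the first case lies in $A(0;\rho,1)$, so $|w| = 1/|w'| \in (1,1/\rho)$ and $w \in A_\rho$.

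With $f^{-1}(A_\rho) \subset A_\rho$ in hand, induction closes the argument: assuming every branch of $f^{-(n-1)}$ is well-defined on $B(z_0,r)$ with image contained in $A_\rho$, the first part (applied pointwise within $A_\rho$) ensures every branch of $f^{-1}$ is well-defined on those images, and composition yields all branches of $f^{-n}$ on $B(z_0,r)$. The main obstacle I anticipate is the bookkeeping in the Schwarz-reflection case of the invariance step; once that reflected picture is set up correctly, everything reduces to a direct unpacking of Remark~\ref{altdefocf} and the Schwarz lemma at the Denjoy--Wolff point $0$.
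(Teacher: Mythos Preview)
Your argument is correct and rests on the same two ingredients as the paper's proof: the Schwarz lemma at the fixed point $0$ for the invariance $f^{-1}(A_\rho)\subset A_\rho$, and the absence of singular values in $A_\rho$ (Remark~\ref{altdefocf}) for the existence of inverse branches on simply connected subsets. The paper's proof is considerably more terse: it does not carry out your three-case split (target inside, on, or outside $\partial\D$ via Schwarz reflection) but simply says ``by the Schwarz lemma, $f^{-1}A_\rho\subset A_\rho$''; and for the well-definedness of branches it cites \cite[Lemma~7.1]{urb}, which asserts that $f$ restricted to the annulus is a covering map, rather than arguing from the regular-value definition as you do. Your route is more self-contained and makes the reflection symmetry explicit, which clarifies why the invariance holds on the full annulus and not just the inner half; the paper's route buys brevity by outsourcing the covering-map statement to an external reference. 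One minor point: when you say the branches are well-defined on $B(z_0,r)$ ``by the very definition of regular value'', you are implicitly using monodromy (the local inverse branches glue over a simply connected region free of singular values), which is exactly what the covering-map formulation in \cite{urb} packages up.
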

\begin{proof}
    By the Schwarz lemma, $f^{-1}A_\rho \subset A_\rho$, and by \cite[Lemma~7.1]{urb}, the map $f: \D \to \D$ is a covering map on this annulus. Hence, every branch of $f^{-1}$ is well defined in $A_\rho$. Furthermore, as $f^{-n}A_\rho \subset A_\rho$, the same is true for $f^{-n}$.
\end{proof} 
In fact, more is true. The map $f: \D \to \D$ is a covering map on $A(0;\rho,1)$ and the extension of the map is a covering map on $A_\rho$.

Finite degree one-component inner functions are, in fact, equivalent to a ubiquitous family of functions, as the following lemma from \cite{CM} illustrates.
\begin{lemma}\label{foc}
Let $f$ be an inner function. Then $f$ is a finite degree one-component inner function if and only if $f$ is a finite Blaschke product.
\end{lemma}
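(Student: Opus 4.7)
The plan is to reduce both implications to the characterisation of the one-component property via the location of the singular values given in Remark~\ref{altdefocf}, combined with Lemma~\ref{svep} identifying $\Sigma$ with the boundary singularities $E(f)$.

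For the easy direction, suppose $f$ is a finite Blaschke product of degree $n$. Then $f$ is rational with all poles strictly outside $\overline{\D}$, so $f$ extends holomorphically to a neighbourhood of $\overline{\D}$ and, in particular, has no asymptotic values. By a classical theorem of Heins, the critical points of a degree $n$ finite Blaschke product form an $(n-1)$-element set (with multiplicity) lying in $\D$, so the critical values form a finite subset of $\D$. Hence $SV(f)$ is compactly contained in $\D$, and Remark~\ref{altdefocf} gives that $f$ is one-component; finite degree is immediate.

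For the converse, let $f$ be a one-component inner function of finite degree $n$. First I would show that $\Sigma = \emptyset$. Assume for contradiction that $z_0 \in \Sigma$. By Lemma~\ref{svep}, $z_0 \in E(f)$, so $f$ cannot be continued analytically to any neighbourhood of $z_0$; since $f$ is bounded on $\D$, this makes $z_0$ an essential singularity of $f$ from the $\D$-side. The Great Picard Theorem then gives that on any sufficiently small neighbourhood $U$ of $z_0$, $f(U \cap \D)$ omits at most one value of $\C$. Picking $w \in \D$ different from that possible exceptional value, the set $f^{-1}(w) \cap U \cap \D$ is infinite, contradicting the fact that a finite degree inner function has at most $n$ preimages in $\D$ of any point of $\D$. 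Therefore $\Sigma = \emptyset$, so $f$ extends analytically across $\partial \D$; combined with $|f| = 1$ on $\partial \D$, the Schwarz reflection principle promotes this to a meromorphic self-map of $\hat{\C}$, so $f$ is rational, and a rational inner function is classically a finite Blaschke product.

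The main obstacle is the Picard step, specifically reconciling the notion of ``finite degree'' (naturally defined via the a.e.\ boundary preimage count, cf.\ \cite[Corollary~3.3]{urb}) with the interior preimage count that drives the contradiction. This boils down to verifying that for a one-component inner function both counts coincide, which follows from the properness of $f$ in the neighbourhood of $\partial \D$ afforded by compact containment of $SV(f)$ in $\D$ together with Lemma~\ref{annul}.
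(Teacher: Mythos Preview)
The paper does not give its own proof of this lemma; it is quoted from \cite{onc}, so there is no in-paper argument to compare your approach against.

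On its own merits: the forward direction is fine, but the reverse direction has a genuine gap at the Picard step. The Great Picard Theorem requires $f$ to be holomorphic on a full punctured neighbourhood of $z_0$; here $f$ is only defined on the half-neighbourhood $U\cap\D$, and calling $z_0$ an ``essential singularity from the $\D$-side'' does not by itself yield any Picard-type conclusion about $f(U\cap\D)$ --- for a general bounded holomorphic function on $\D$, failure to extend across a boundary point does not force the nearby range to be large (e.g.\ $f(z)=(1-z)^{1/2}$ at $z_0=1$). To invoke Picard legitimately you would first have to Schwarz-reflect across $\partial\D$ to obtain a function holomorphic on all of $U\setminus\{z_0\}$, but reflection is only available along arcs of $\partial\D\setminus\Sigma$, so this requires $z_0$ to be \emph{isolated} in $\Sigma$, which you have not established and which is not automatic for one-component inner functions. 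The concern you flag about reconciling the two notions of degree is real but secondary to this. A route that bypasses Picard entirely: by Frostman's theorem, for quasi-every $w\in\D$ the shift $\phi_w\circ f$ is a Blaschke product with singular set $E(\phi_w\circ f)=E(f)$; if $E(f)\neq\emptyset$ this Blaschke product cannot be finite, so $f^{-1}(w)$ is infinite, contradicting finite degree. Hence $E(f)=\Sigma=\emptyset$, after which your final reflection argument goes through.
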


With Lemma~\ref{foc} in mind, we recall the definition of a Blaschke product.
Recall that, for a given sequence of $\{a_n\}$ in $\D$ which satisfy the Blaschke condition \begin{align*}
     \sum_{n} \big(1-|a_n|\big)<\infty \, , 
\end{align*}
the Blaschke product with zeros at $\{a_n\}$ is defined by
\begin{align}\label{defblas}
 b(z):= \prod\limits_{n} \frac{|a_n|}{a_n} \frac{a_n-z}{1-\overline{a_n}z} \,  \quad \text{ \ for \ } \,  z \in \D \, .
\end{align}
Each zero is repeated according to its multiplicity, and the convention $\frac{|a_n|}{a_n}=1$ is used when $a_n=0$.
When the product in \eqref{defblas} finite, we say $b$ is a finite Blaschke product. 
Any analytic function $f: \D \to \D$ which can be extended to a continuous function $f:\overline{\D} \to \overline{\D}$ is actually a finite Blaschke product. We stress that this means the radial limit of $f$ exists everywhere on $\partial \D$ (i.e. $\Sigma=\emptyset$). Therefore, $f$ is a self map of $\overline{\D}$, and hence an inner function. We summarise the key consequences of this fact in the following lemma.
\begin{lemma}\label{blasprop}
Let $b(z)$ be a degree $n$ finite Blaschke product. Then 
\begin{enumerate}
    \item {$b$ is continuous on $\partial \D$};
    \item {$|b|=1$ on $\partial \D$};
    \item {$b$ has $n-1$ fixed points on $\partial \D$, all of which are repelling}.
\end{enumerate}
\end{lemma}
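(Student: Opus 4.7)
The plan is to dispatch (1) and (2) directly from the explicit Blaschke-factor formula, and then tackle (3) by a degree count for $b$ viewed as a rational self-map of $\hat{\C}$, combined with Schwarz's lemma and the reflection symmetry $b(z)=1/\overline{b(1/\overline{z})}$.

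For (1), I would observe that each Blaschke factor $\frac{|a|}{a}\frac{a-z}{1-\overline{a}z}$ with $|a|<1$ has its unique pole at $1/\overline{a}$, which lies strictly outside $\overline{\D}$. Consequently, $b$ is a rational function whose only poles lie outside $\overline{\D}$, so it is holomorphic, and hence continuous, in an open neighbourhood of $\overline{\D}$. For (2), the identity $|1-\overline{a}z|=|z-a|$ for $|z|=1$, which follows from $\overline{z}=z^{-1}$, shows that each Blaschke factor has modulus $1$ on $\partial\D$; a product of unimodular factors is then itself unimodular on $\partial\D$.

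For (3), I view $b$ as a rational map of $\hat{\C}$ of degree $n$, so that the fixed-point equation $b(z)-z=0$ admits exactly $n+1$ roots in $\hat{\C}$ counted with multiplicity. A short factor-by-factor check confirms that $b(z)=1/\overline{b(1/\overline{z})}$, which implies that the set of fixed points of $b$ is invariant under the involution $\zeta\mapsto 1/\overline{\zeta}$. This involution fixes $\partial\D$ setwise and swaps $\D$ with $\hat{\C}\setminus\overline{\D}$. By Schwarz's lemma, at most one fixed point of $b$ lies in $\D$, and in the centred setting of the paper this interior fixed point is $0$, with reflected partner $\infty$. Subtracting these two from the total $n+1$ leaves exactly $n-1$ fixed points on $\partial\D$. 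Their repelling character follows immediately from Lemma~\ref{inexp}: a centred finite Blaschke product is uniformly expanding on $\partial\D$, so $|b'(\zeta)|>1$ at every boundary fixed point.

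The main obstacle is making the fixed-point count in (3) watertight when one of the zeros of $b$ sits at the origin. In that case the polynomial $P(z)-zQ(z)$ (numerator minus $z$ times denominator) has degree strictly less than $n+1$, and the ``missing'' root corresponds to a fixed point at $\infty$, so one must track multiplicities at $\infty$ through the degree of $b$ as a rational map rather than via the naive polynomial degree. The $n=1$ M\"obius case is formally excluded from the Schwarz step but is elementary to treat by hand. The remaining ingredients are routine.
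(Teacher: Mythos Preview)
The paper does not actually prove this lemma: it is stated as a summary of standard facts about finite Blaschke products, with no proof environment following. (The LaTeX source does contain, inside a \texttt{comment} block, a sketch of part~(3) via the reflection identity $b(z)=1/\overline{b(1/\overline{z})}$ together with Schwarz's lemma --- precisely the route you take.) Your argument is correct and is the standard one; your observation that simplicity of the boundary fixed points follows from $|b'|>1$ there (so $b'(\zeta)\neq 1$) neatly closes the multiplicity issue. The only point worth flagging is that the lemma as printed drops the hypothesis ``centred'', which you rightly reinstate: both the count $n-1$ (which needs an interior fixed point to pair with $\infty$) and the appeal to Lemma~\ref{inexp} for the repelling property genuinely require it, and the paper only ever uses the lemma in that setting.
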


Before proving Theorem~\ref{ABrecocif}, we need the following background.
\subsection{Markov Partition for one-component inner functions}\label{Markov}
To date, all recurrence results rely on the dynamical system in question exhibiting a partition with certain regularity properties. 
We discuss Markov partitions for the autonomous measure preserving dynamical system $(X,\mathcal{B},\mu,t,d)$. Informally, a Markov partition decomposes the space $X$ into at most countably many connected components $\{U_i\}_{i \in \mathcal{I}}$ with minimal overlap. These generate each `level' of the partition such that the restriction of the map on each level $n$ piece is a homeomorphism mapping the piece to a union of partition elements. If the collection of $U_i$ is finite, we say that $t$ admits a finite partition. If the collection is (countably) infinite, we say $t$ admits an infinite Markov partition.    
 More precisely, we define level $n$ of the partition to be 
 \begin{eqnarray}\label{cylset}
     \mathcal{F}_n:=\{ U_{i_0} \cap T^{-1}U_{i_1} \cap \dots. \cap T^{-(n-1)}U_{i_{n-1}}: i_0,..,i_{n-1} \in \mathcal{I}\} \, .
 \end{eqnarray}
 A connected component $J_n \in \mathcal{F}_n$ is called a cylinder set of level $n$. For more details on Markov partitions and an explicit construction of one for some inner functions, see \cite{fernandez2007}.
 
 We now demonstrate that every centred, one-component inner function $f$ admits such a partition. The construction splits naturally into two cases depending on whether $\deg(f)<\infty$ or not. 
By Lemma~\ref{foc}, in the former case, $f$ is  a finite Blaschke product $b$, say $\deg(b)=N$.  First, by Lemma~\ref{blasprop}, $b$ has an unramified, repelling fixed point $x \in \partial \D$. As all points on $\partial \D$ are regular:
$$\{b^{-1}(x)\}= \{x=x_1,\dots,x_N \} \, \subset \partial \D$$
counting multiplicity.
 These preimages divide the unit circle into $N$ complimentary open arcs $\{U_i\}$ with end points at $x_i,x_{i+1}$ for $i \in \{1,..,N-1\}$ and $U_N$ with end points at $x_{N},x_0$. It is clear that $\partial \D  = \bigcup_{i=1}^N \overline{U_i}$, and also $b(U_i)= \partial \D \setminus \{x\}$. Crucially, for each $i$, $b|_{U_i}$ is a continuous, injective map. Uniform expansion on $\partial \D$ implies that $\arg(f)$ is monotonic on each $U_i$.
As described in \eqref{cylset}, we can generate level $n$ of the partition through these $U_i$. Furthermore, it is clear that we can encode any point in $ x \in \partial \D$ with an infinite itinerary $(i_1,\dots,i_n,\dots)$ with $i_j \in \{1,\dots,N\}$, recording which cylinder set $x$ lies in at any given level. This itinerary is unique for all but a countable set of points in $\partial \D$. 
\\

In \cite[\S8]{urb},  Ivrii-Urba\`nski construct a Markov partition for infinite degree one-component inner functions which generalises the above method for finite Blaschke products. This construction is more delicate because now $\Sigma \neq \emptyset$.  Therefore, $\Sigma$ already partitions $\partial \D$ into arcs $\{I_i\}$. By Lemma~\ref{svep}, as $f$ is one-component, its associated exceptional set $\Sigma$ coincides with the singular set $E(f)$ and so is closed. We can decompose the unit circle into open arcs $\partial \D \setminus \Sigma = \{I_i\}_{i=1}^\infty$. By Remark~\ref{altdefocf}, we also know that all points in $\partial \D $ are regular.
We first must show that there exists a fixed point $x \in \partial \D$. We then will use to generate the arcs $U_i$ in \eqref{cylset} as before. 
To this end, pick an arbitrary point $p$ and consider $f^{-1}(p)$. This divides each $I_i$ into countably many arcs (as $f$ is essentially countable to one, and acts as a covering map on $I_i$ by the discussion surrounding Lemma~\ref{annul}) each of which is mapped by $f$ to $\partial \D \setminus \{p\}$. This implies that each subdivided arc of $I_i$ (which has end points in $f^{-1}(p)$) must contain a fixed point. 
Therefore, it is possible to find $x$.  As before, we use $x$ to define the sets $\{U_i\}$ (of which there will be countably many) that generate our partition.
\\

To prove recurrence results, we will need to understand the expansion rate of $f^n$ on the unit circle. This is determined by the derivative $f'$. As $f'$ is infinite on $\Sigma$ (see Lemma~\ref{svep}) we may not find bounds for $f'$ on an arbitrary arc, as it may contain points from $\Sigma$. However, as we will prove, we can obtain uniform bounds for $|(f^n)'|$ on each cylinder set $J_n$. Notice that as we will only consider the derivative away from the set of singularities, we can use the standard definition of derivative when considering $f'$.

We first will show that for all level one cylinder sets $J_{1_i} \in \mathcal{F}_1$, the map $f$ has bounded distortion.
\begin{lemma}\label{bdddist} Let $f$ be an infinite degree one-component inner function, and let $J_1 \in \mathcal{F}_1$ be a level-one cylinder set from the Markov partition. Then there exists a constant $C$, depending only on $f$, such that 
\begin{eqnarray}\label{bod1}
    \frac{1}{C} \: \leq \: \frac{|f'(a)|}{|f'(b)|}\:  \leq \: C \, 
\end{eqnarray}
for all $a,b \in J_1$.
\end{lemma}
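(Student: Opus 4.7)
The plan is to derive the distortion estimate from the Koebe Distortion Theorem applied to univalent inverse branches of $f$. By Remark~\ref{altdefocf} and Lemma~\ref{annul}, $f$ extends to a holomorphic covering map on the annulus $A_\rho$ with no singular values in $A_\rho$, so every inverse branch of $f$ is univalent on any simply connected subdomain of $A_\rho$.

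First I would fix a simply connected domain $W \subset A_\rho$ containing $\partial\D \setminus \{x\}$, such as the slit annulus $W = A_\rho \setminus \gamma$, where $\gamma$ is the radial segment through the repelling fixed point $x \in \partial\D$ used to construct the Markov partition. For each level-one cylinder $U_i \in \mathcal{F}_1$, the restriction $f|_{U_i}$ is a homeomorphism onto $\partial\D \setminus \{x\}$ that extends to a univalent map on a neighbourhood $V_i \supset \overline{U_i}$, and I let $g_i : W \to V_i$ denote the corresponding univalent inverse branch, so that $g_i(\partial\D \setminus \{x\}) = U_i$. Applying the Koebe Distortion Theorem to $g_i$, for any $z_0 \in W$ and any disk $B(z_0,r) \subset W$, the ratio $|g_i'(z_1)|/|g_i'(z_2)|$ is bounded by an absolute constant on $B(z_0, r/2)$, uniformly in $i$. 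Chaining such bounds along a finite cover of $\partial\D$ by Koebe disks of uniform radius then yields a uniform constant $K$ with $|g_i'(z_1)|/|g_i'(z_2)| \leq K$ for all $z_1, z_2 \in \partial\D \setminus \{x\}$ and all $i$. Writing any $a, b \in U_i$ as $a = g_i(z_1)$, $b = g_i(z_2)$ and using $|f'(g_i(z))| = 1/|g_i'(z)|$ then gives the desired bound with $C = K$.

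The main obstacle is ensuring that the cover of $\partial\D$ by Koebe disks can be chosen uniformly in $i$. The difficulty lies near the slit endpoint $x$, where the Koebe disks in $W$ naturally shrink and the bounds degenerate. I expect to resolve this by continuing each branch $g_i$ holomorphically across $x$: since $g_i(x) = e_i \in f^{-1}(x) \subset \partial\D \setminus \Sigma$ is a regular point of $f$ (where $f$ is locally invertible by Lemma~\ref{svep} and the essentially countable-to-one property), the extension of $g_i$ to a small neighbourhood of $x$ agrees with the univalent local inverse of $f$ at $e_i$. This provides a finite, $i$-independent cover of $\partial\D$ by Koebe disks lying in the (possibly enlarged) domain of each $g_i$, so that the resulting constant $C$ depends only on $\rho$ and the chosen slit, and hence only on $f$.
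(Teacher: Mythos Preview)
Your proposal is correct and follows essentially the same approach as the paper: apply the Koebe Distortion Theorem to univalent inverse branches of $f$ on disks contained in $A_\rho$, then chain the resulting local bounds along a finite cover of $\partial\D$ to obtain a constant depending only on the number $N$ of balls in the cover (the paper gets $C=81^N$).

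One minor imprecision worth flagging: your claim ``$g_i(x)=e_i$'' is not quite right. As $z\to x$ along $\partial\D$ from the two sides, $g_i(z)$ approaches the two \emph{distinct} endpoints of $U_i$, so $g_i$ does not extend to a single-valued function across the slit; rather, it admits two separate univalent continuations, one from each side, and Koebe applies to each. This still gives what you need. The paper sidesteps the issue entirely by never slitting $A_\rho$: it covers $\partial\D$ directly by balls $B(z_j,\xi/2)$ with $B(z_j,\xi)\subset A_\rho$ (these balls may well contain $x$), defines on each $B(z_j,\xi)$ the inverse branch landing in $J_1$, and chains the Koebe bounds along overlapping balls. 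This is slightly cleaner than your slit-annulus setup, but not essentially different.
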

\begin{remark}
    The constant $C$ depends on the distance of the set of singular values of $f$ to the unit circle.
\end{remark}

\begin{proof}
  To establish bounded distortion on $J_1$, we proceed in two steps. To begin, by Remark~\ref{altdefocf}, there exists $\xi > 0$ such that for any point $z$ on the unit circle and fixed pre-image $w$ of $z$, there exists a  holomorphic branch of $f^{-1}$ on $B(z,\xi) \subset {A}_{\rho}$ mapping $z$ to $w$. By compactness, we can therefore select finitely many points ${z_i}$ on $\partial \D$ (with corresponding preimages ${w_i}$) so that the collection of balls $$\mathscr{B}:=\{B(z_i,\xi/2)\}_{i=1}^N$$ covers $\partial \D$. The relevance of the choice $\xi/2$ will soon become apparent.
We obtain local distortion bounds on $f'$ by applying Koebe’s theorem to the preimages of the balls in $\mathscr{B}$. Second, we chain together these local estimates along the balls in $\mathscr{B}$ whose preimages cover $J_1$, yielding a uniform distortion bound across the whole of $J_1$.

In step one, $f^{-1}$ will denote the branch of the inverse sending $z_1$ to $w_1$.
Suppose now that $w_1:=f^{-1}(z_1) \in J_1$ lies in the preimage of $B(z_1,\xi/2)$. In order to use Koebe distortion on this ball, we start by applying a sequence of transformations to map $B(z_1,\xi) \to \D$. We apply the map $m_1: B(z_1,\xi) \to \C$ given by $m_1(z)=z-z_1$ to shift the ball to be centred at 0. We then rescale the radius to $1$ by composing with $m_2(z)=z/\xi$. The composition $f^{-1}\circ m_1^{-1}\circ m_2^{-1}: \D \to \C$ is univalent on the unit disk. We now normalise once more to make the origin a fixed point with derivative of modulus one.
On doing so, we obtain a map $g: \D \to \C$, given by
$$g(z) \: = \:  \frac{f^{-1}\circ m_1^{-1}\circ m_2^{-1}(z) \:- \: f^{-1}\circ m_1^{-1}\circ m_2^{-1}(0) }{|(f^{-1}\circ m_1^{-1}\circ m_2^{-1})'(0)|} \, $$
which is centred, univalent and $|g'(0)|=1$. Hence, $g$ satisfies all the conditions of the Koebe Distortion Theorem, and so we obtain the following bounds: 
\begin{eqnarray}\label{koeb}
    \frac{1-|z|}{(1+|z|)^3}  \: \leq \: |g'(z)|\:  \leq \: \frac{1+|z|}{(1-|z|)^3} \quad \text{ \ for all \ } z \in \D\, .
\end{eqnarray} 
For $\zeta \in B(z_1,\xi)$, let $z=(m_2 \circ m_1)(\zeta) \in \D$. Using the fact that $(f^{-1})'=\frac{1}{f'(f^{-1})}$, we obtain $$|g'(z)|= \frac{|f'(f^{-1}(z_1))|}{|f'(f^{-1}(\zeta))|} \, .$$
Since the bounds in \eqref{koeb} degenerate as $|z|\to 1$, we restrict attention to the smaller ball $B(0,1/2)$. In this case we have the explicit, finite bounds $4/27 \leq |g'(z)| \leq 12$ for all $z \in B(0,1/2)$. 
 Given any $x,y \in f^{-1}B(z_1,\xi/2)$, it then follows that 
\begin{eqnarray}\label{bod}
\frac{1}{81} \: \leq \: \frac{|f'(x)|}{|f'(y)|} \: \leq \: 81 \: \text{ \ for all \ }\: x,y \:  \in \: f^{-1}B(z_1,\xi/2) \, .
\end{eqnarray} 
We remark that inequality \eqref{bod} expresses the bounded distortion property of $f$ on the set $f^{-1}B(z_1,\xi/2)$, with absolute constant $81$.

We now wish to prove bounded distortion from points in a single arc $B(z_1,\xi/2)\cap \partial\D$ to any two points in a fixed $J_1 \in \mathcal{F}_1$. To do so, as outlined in the beginning of the proof, we choose a finite sequence of points $\{z_i\}_{1 \leq i \leq N}$ on $\partial\D$ and a branch of $f^{-1}$ such that each $z_i$ has a preimage $w_i \in J_1$, and the preimages of the balls $B(z_i,\xi/2)$ cover $J_1$. Then, for any $a,b \in J_1$, we can extract a chain of at most $N$  balls whose preimages join $a$ to $b$. Since none of the corresponding boundary points lie in $\Sigma$, we can apply Koebe Distortion Theorem on each ball. Iterating the bound \eqref{bod} along the chain yields
\begin{eqnarray*}
      \frac{1}{81^k} \leq \frac{|f'(a)|}{|f'(b)|} \leq 81^k \quad \text{ \ for some \ } 1 \leq k \leq N .
\end{eqnarray*}
As $a$ and $b$ were arbitrary, and $k \leq N$, we have for all $a,b \in J_1$
\begin{eqnarray*}
    \frac{1}{81^N} \: \leq \: \frac{|f'(a)|}{|f'(b)|}\:  \leq \: 81^N \, .
\end{eqnarray*}
Setting $C=81^N$, we are done.
\end{proof}

\begin{lemma}\label{kd} Let $f$ be an infinite degree one-component inner function, and let $J_1 \in \mathcal{F}_1$ be an interval from level one of the Markov partition of $\partial \D$ associated to $f$. Then, for the same $C$ as in Lemma~\ref{bdddist},
    $$
\frac{1}{C \lambda(J_1)} \leq |f'(w)| \leq \frac{C}{\lambda(J_1)} \, 
$$
for all $w \in J_1$.
\end{lemma}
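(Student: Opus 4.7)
The plan is to combine the bounded distortion estimate of Lemma~\ref{bdddist} with the fact that the map $f$ sends a level-one cylinder $J_1$ onto essentially the whole unit circle. Recall from the construction of the Markov partition in Section~\ref{Markov} that each level-one cylinder $U_i$ (and hence $J_1$) is mapped bijectively and continuously by $f$ onto $\partial\D$ minus at most a countable set of points (the preimages of the fixed point $x$ used to generate the partition, together with $\Sigma$). Since $\lambda$ is normalised Lebesgue measure on $\partial\D$, this exceptional set has measure zero, and so $\lambda(f(J_1)) = \lambda(\partial\D) = 1$.

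Outside the exceptional set, $f\!\mid_{J_1}$ is a smooth diffeomorphism onto its image, so by the change of variables formula,
\begin{equation*}
1 \;=\; \lambda(f(J_1)) \;=\; \int_{J_1} |f'(w)|\, \dd\lambda(w).
\end{equation*}
From here the result is immediate from Lemma~\ref{bdddist}. Fix any $w \in J_1$. By bounded distortion, for every $w' \in J_1$ we have
\begin{equation*}
\frac{|f'(w)|}{C} \;\leq\; |f'(w')| \;\leq\; C\,|f'(w)|.
\end{equation*}
Integrating this chain of inequalities over $w' \in J_1$ gives
\begin{equation*}
\frac{|f'(w)|}{C}\,\lambda(J_1) \;\leq\; \int_{J_1} |f'(w')|\, \dd\lambda(w') \;=\; 1 \;\leq\; C\,|f'(w)|\,\lambda(J_1),
\end{equation*}
which rearranges to the desired inequality.

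The only genuinely non-trivial point is the justification that $\lambda(f(J_1)) = 1$ and that the change of variables formula applies. This needs the essentially countable-to-one property of centred, one-component inner functions (recorded just after Lemma~\ref{svep}), together with the fact that by construction the endpoints of $J_1$ lie in $\{x_1, \dots, x_N\} \cup \Sigma$, a set of measure zero, so that $f$ is a genuine bijection from $J_1$ onto $\partial\D$ off a null set. Once this is observed, the argument is a one-line consequence of Lemma~\ref{bdddist}.
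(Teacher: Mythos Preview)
Your proof is correct and follows essentially the same strategy as the paper's: both combine the bounded distortion of Lemma~\ref{bdddist} with the fact that $f(J_1)=\partial\D\setminus\{x\}$ has full measure to conclude that $|f'|\asymp 1/\lambda(J_1)$ on $J_1$. The only cosmetic difference is that the paper writes $f(e^{i\theta})=e^{iS(\theta)}$ and applies the Mean Value Theorem to $S$ to locate a single point where $|f'|=1/\lambda(J_1)$ exactly, whereas you reach the same conclusion by integrating via the change-of-variables formula.
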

\begin{proof}
By Lemma~\ref{bdddist}, it suffices to show that there exists $w \in J_1$ with $|f'(w)| \asymp 1/\lambda(J_1)$.
Write $f(e^{i\theta}) = e^{iS(\theta)}$, so $|f'(e^{i\theta})| = |S'(\theta)|$. Since $f$ is univalent on $J_1$ and $f(J_1) = \partial \D \setminus \{p\}$, the function $S: [0,2\pi) \to [0,2\pi)$ is monotone on $J_1$. Define $t := S/2\pi$, so that $t$ maps $\overline{J_1}$ bijectively onto $[0,1]$. Suppose $t$ sends $\overline{J_1}$ to $[x,y] \in [0,1]$, and note that by monotonicity $t(x)=0, t(y)=1$.
Then by the Mean Value Theorem,
\begin{eqnarray}\label{mvt1}
   1=|t(x)-t(y)| \: = \: |t'(c)||x-y| \, 
\end{eqnarray}
for some $c \in [x,y]$.  As $t'(\theta) = S'(\theta)/(2\pi) = f'(e^{i\theta})/(2\pi)$, we have $|t'(c)|=|f'(e^{ic})|$. Hence, 
\begin{eqnarray}\label{dis}
    |f'(e^{ic})|=1/\lambda(J_1) \, .
\end{eqnarray}
Finally, we can combine \eqref{bod1} and \eqref{dis} to obtain:
$$
\frac{1}{81^N \lambda(J_1)}\: \leq \: |f'(w)| \: \leq  \: \frac{ 81^N}{\lambda(J_1)} \, 
$$
for all $w \in J_1$.  As $J_1 \in \mathcal{F}_1$ was arbitrary, we are done.
\end{proof}
 The constants depend only on $N$ (and so only on $\xi$).
 We now show that under the extra hypothesis that $f$ is centred then  $|(f^n)'| \asymp 1/ \lambda(J_n)$ as in Lemma~\ref{kd} with the same constants.
\begin{lemma}\label{kdn}
Let $f$ be a centred, infinite degree one-component inner function. Then, for every $n \in \N$, every cylinder $J_n\in \mathcal{F}_n$ and every point $w \in J_n$, we have
   $$
\frac{1}{C \lambda(J_n)} \leq |(f^n)'(w)| \leq \frac{ C}{\lambda(J_n)} \, ,
$$
for the same constant $C$ as in Lemma~\ref{bdddist}.
\end{lemma}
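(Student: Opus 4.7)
The strategy is to extend the two-step approach of Lemma~\ref{kd} from $f$ to $f^n$: first, establish bounded distortion of $f^n$ on any level-$n$ cylinder $J_n$ with a distortion constant independent of $n$; then, apply the same mean value theorem argument. Two structural inputs drive the proof. First, by the Markov construction of Section~\ref{Markov}, $f^n|_{J_n}$ is a homeomorphism of $J_n$ onto $\partial\D\setminus\{p\}$, so $\lambda(f^n(J_n))=1$. Second, by Lemma~\ref{annul}, every branch of $f^{-n}$ is univalent on any ball $B(z,\xi)\subset A_\rho$ with $z\in\partial\D$, where $\xi>0$ depends only on $f$ (and not on $n$).

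For bounded distortion, I repeat the Koebe normalisation of Lemma~\ref{bdddist} verbatim with $f^{-1}$ replaced by the relevant branch of $f^{-n}$: the normalised map $G:\D\to\C$ is univalent with $G(0)=0$ and $|G'(0)|=1$, and restricting to $B(0,1/2)$ yields
\[
\frac{1}{81}\,\leq\,\frac{|(f^n)'(a)|}{|(f^n)'(b)|}\,\leq\,81
\qquad\text{for }a,b\in (f^n)^{-1}B(z,\xi/2),
\]
for any branch of $f^{-n}$. Reusing the finite cover $\{B(z_i,\xi/2)\}_{i=1}^N$ of $\partial\D$ from Lemma~\ref{bdddist}, I chain these local estimates along the arc $\partial\D\setminus\{p\}=f^n(J_n)$, pulling back through the unique continuous branch $(f^n|_{J_n})^{-1}$. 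Iterating along at most $N$ balls gives, for all $a,b\in J_n$,
\[
\frac{1}{C}\,\leq\,\frac{|(f^n)'(a)|}{|(f^n)'(b)|}\,\leq\, C,
\]
with $C=81^N$, the same constant as in Lemma~\ref{bdddist}.

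For the mean value step, I parametrise $J_n=\{e^{i\theta}:\theta\in(a,b)\}$ and write $f^n(e^{i\theta})=e^{iS_n(\theta)}$ with $S_n$ monotone by injectivity of $f^n|_{J_n}$. Since $f^n|_{J_n}$ is a homeomorphism onto $\partial\D\setminus\{p\}$, the total variation of $S_n$ on $(a,b)$ equals $2\pi$, and the MVT applied exactly as in Lemma~\ref{kd} produces $c\in J_n$ with $|(f^n)'(e^{ic})|=1/\lambda(J_n)$. Combining this with the distortion bound yields the claimed two-sided estimate.

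The main obstacle is the $n$-independence of the distortion constant: a naive chain-rule estimate applying Lemma~\ref{bdddist} one iterate at a time would produce the useless bound $C^n$. The one-component hypothesis avoids this entirely via Lemma~\ref{annul}: the radius $\xi$ does not shrink with $n$, so the Koebe estimate and the finite cover of $\partial\D$ both work uniformly in $n$, producing the same $C=81^N$ as for $f$ itself.
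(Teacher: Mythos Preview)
Your proposal is correct and follows essentially the same approach as the paper: replace $f^{-1}$ by the appropriate branch of $f^{-n}$ in the Koebe normalisation argument (legitimised by Lemma~\ref{annul}, which guarantees the radius $\xi$ is $n$-independent), chain along the same finite cover $\{B(z_i,\xi/2)\}_{i=1}^N$ to get bounded distortion with constant $C=81^N$, and then rerun the mean value argument of Lemma~\ref{kd}. The paper's proof is terser---it simply says ``apply the exact same argument with the map $g(z)$ instead involving $f^{-n}$ instead of $f^{-1}$''---but your version spells out exactly the same steps and correctly identifies the key point that the one-component hypothesis prevents the distortion constant from degrading to $C^n$.
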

\begin{proof}
We want to bound $|(f^n)'(w)|$.
By Lemma~\ref{annul}, as any branch of $f^{-1}$ is defined in a ball contained in $ A_{\rho}$, any branch of $f^{-n}$ is also well defined in this ball. Given a cylinder set at level $n$, say $J_n$, the point $z_1$ will have point $v_1 = f^{-1}(z_1)$ in $J_n$. We can apply the exact same argument with the map $g(z)$ instead involving $f^{-n}$ instead of $f^{-1}$. Hence, we have uniform bounds on $|(f^n)'(z)|$ for $z \in J_n$ with the same constants as for $J_1$.
\end{proof}
We finally prove that the dynamical system associated to  a centred, one-component inner function satisfies a conformality condition. In fact, the argument applies more generally to one-dimensional Ahlfors-regular systems with a full shift partition.

\begin{lemma}\label{lcondconf}
    Let $f$ is a centred, one-component inner function and $(\partial \D,\mathcal{B},\lambda,f,d)$ its associated dynamical system. There exists a constant $C$ such that for any $J_n \in \mathcal{F}_n$ and any $B(x_0,s) \subset J_n$
\begin{eqnarray}\label{condconf}
    B(f^n(x_0), C^{-1}K_{J_n}s) \subset f^nB(x_0,s) \subset B(f^n(x_0),CK_{J_n}s) 
\end{eqnarray}
    where $K_{J_n}:=\inf\limits_{z \in J_n}|(f^n)'(z)|$.
\end{lemma}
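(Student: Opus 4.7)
The plan is to combine the uniform bounded distortion of $f^n$ on cylinders from Lemma~\ref{kdn} with the Markov property that $f^n|_{J_n}$ is a homeomorphism onto its image. Set $M_{J_n} := \sup_{z \in J_n} |(f^n)'(z)|$, so that Lemma~\ref{kdn} yields $M_{J_n} \leq C_0^2 \, K_{J_n}$, where $C_0$ is the constant from that lemma; this ratio bound is the key input for both inclusions.

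For the outer inclusion I would proceed via the mean value theorem on the arc from $x_0$ to $y$. Given $y \in B(x_0,s) \subset J_n$, the sub-arc from $x_0$ to $y$ lies entirely in $J_n$, and integrating $|(f^n)'|$ along it yields
$$d(f^n(x_0), f^n(y)) \leq M_{J_n} \cdot d(x_0, y) \leq M_{J_n} \cdot s \leq C_0^2 \, K_{J_n} \, s,$$
so $f^n B(x_0,s) \subset B(f^n(x_0), C K_{J_n} s)$ with $C := C_0^2$.

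For the inner inclusion I would use the inverse branch $g$ of $f^n$ on $f^n(J_n)$ that sends $f^n(x_0)$ back to $x_0$. The derivative bound $|(f^n)'(z)| \geq K_{J_n}$ on $J_n$ gives $|g'(u)| \leq 1/K_{J_n}$ on $f^n(J_n)$. Before applying this estimate, I would first verify that $B(f^n(x_0), C^{-1} K_{J_n} s)$ lies inside $f^n(J_n)$: since $B(x_0,s) \subset J_n$ we have $d(x_0, \partial J_n) \geq s$, and integrating $|(f^n)'| \geq K_{J_n}$ along an arc in $J_n$ from $x_0$ toward the nearest endpoint of $J_n$ gives $d(f^n(x_0), \partial f^n(J_n)) \geq K_{J_n} \, s$. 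Then for any $w$ in this ball, integrating $|g'|$ along the arc from $f^n(x_0)$ to $w$ yields
$$d(x_0, g(w)) \leq \frac{d(f^n(x_0), w)}{K_{J_n}} \leq C^{-1} s \leq s,$$
so $g(w) \in B(x_0,s)$ and $f^n(g(w)) = w$, giving the inclusion.

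The main obstacle is ensuring the constant $C$ is genuinely uniform across all $n$ and all choices of $J_n \in \mathcal{F}_n$, but this uniformity is inherited directly from the corresponding uniformity in Lemma~\ref{kdn}. A secondary technical point is to confirm that the target ball lies inside the image $f^n(J_n)$ so that the lift $g$ is defined; as sketched above, this is handled by the same derivative lower bound on $J_n$, which upgrades the cylinder-inclusion $B(x_0,s) \subset J_n$ to an image-inclusion that controls $g$.
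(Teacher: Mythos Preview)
Your proof is correct and uses the same core ingredients as the paper: the bounded distortion of $(f^n)'$ on cylinders (Lemma~\ref{kdn}) together with mean-value estimates along arcs. The outer inclusions are argued identically. For the inner inclusion you take a slightly different route: you introduce the inverse branch $g=(f^n|_{J_n})^{-1}$, bound $|g'|\le 1/K_{J_n}$, and pull the target ball back into $B(x_0,s)$, after first checking that $B(f^n(x_0),C^{-1}K_{J_n}s)\subset f^n(J_n)$. The paper instead stays on the forward side and works with the images of the two endpoints $a,b$ of $B(x_0,s)$: from the distortion ratio $d(f(a),f(x_0))/d(f(b),f(x_0))\le 81^N$ together with $d(f(a),f(x_0))+d(f(b),f(x_0))\ge K_{J_1}d(a,b)$ it extracts a lower bound on each half of the image arc. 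Your packaging is arguably cleaner and has the virtue of making explicit why the target ball sits inside $f^n(J_n)$; the paper's endpoint argument avoids naming $g$ but is otherwise equivalent, and both give the same dependence on the distortion constant, uniformly in $n$.
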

The proof uses the bounded distortion property proven in Lemma~\ref{bdddist} along with the Mean Value Theorem. The strategy is to show the end points  do  not get mapped by $f$ to the end points of the ball centred at $f(x_0)$. We do this by  bounding the distortion by $f$ of the distance of the end point of the original ball to the centre $x_0$.
\begin{proof}
We first consider when $n=1$.
As $B(x_0,s)$ is an open arc on $\partial \D$, it has end points $a,b$. On each cylinder set, $f(e^{i\theta})=e^{iS(\theta)}$ is monotone in $S(\theta)$. Therefore, as before we can use the Mean Value Theorem on $S$ and transfer it to a statement on $f$, and so there exists  $x_a \in  (a,x_0)$ and $x_b \in  (x_0,b)$ such that
\begin{eqnarray}
    d(f(a),f(x_0)) =|f'(x_a)|d(a,x_0)\quad \text{ \ and \ } \quad  d(f(b),f(x_0)) =|f'(x_b)|d(b,x_0) \, .
\end{eqnarray}
Further, note that
\begin{eqnarray}\label{eqn}
    d(f(a),f(x_0))+ d(f(x_0),f(b))=  d(f(a),f(b)) \geq K_{J_n}d(a,b) \, .
\end{eqnarray}
From the bounded distortion property (Lemma~\ref{bdddist}):
\begin{eqnarray}\label{rat}
    \frac{1}{81^N} \leq \frac{d(f(a),f(x_0))}{d(f(b),f(x_0))}=\frac{|f'(x_a)|}{|f'(x_b)|} \leq 81^N \, .
\end{eqnarray}
Combining \eqref{eqn} and \eqref{rat}:
$$
d(f(b),f(x_0)) \geq K_{J_1}d(b,a) - d(f(a),f(x_0)) \geq K_{J_1}d(b,a)-81^Nd(f(b),f(x_0)) \, .$$
We also have the same inequality for $d(f(a),f(x_0))$. Hence, as $d(a,b)=s$, we have
$$
B\Big(f(x_0),\frac{K_{J_1}s}{2(81^N+1)}\Big) \subset f(B(x_0,s))\, .
$$
This proves the lower inclusion. 

The upper inclusion follows similarly upon using the bounded distortion property once more to show that 
$$
 \frac{1}{81^N} \leq \frac{\sup_{z \in J_1}|f'(z)|}{\inf_{z \in J_1}|f'(z)|} \leq 81^N \, ,
$$
and so we find 
$$
f\big(B(x_0,s)\big) \subset B\left(f(x_0),\frac{81^{2N}K_{J_1}s}{2(81^N+1)}\right) \, .
$$
The same argument follows through for $n>1$, as the distortion constants are independent of the level of the partition.
\end{proof}
\begin{lemma}\label{lcondsum}
There exists an absolute constant $\kappa$ such that 
    \begin{eqnarray}\label{condsum}
        \sum_{J_n \in \mathcal{F}_n}K_{J_n}^{-1} < \kappa \quad \text{ \ for all \ } \,  n \in \N
    \end{eqnarray}
\end{lemma}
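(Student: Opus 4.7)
The plan is to derive \eqref{condsum} as a direct consequence of the two-sided bound on $|(f^n)'|$ established in Lemma~\ref{kdn}, together with the fact that the level-$n$ cylinder sets partition $\partial\D$ up to a set of Lebesgue measure zero. More precisely, by Lemma~\ref{kdn}, for any cylinder $J_n \in \mathcal{F}_n$ and any $w \in J_n$ we have
$$
|(f^n)'(w)| \; \geq \; \frac{1}{C\,\lambda(J_n)}\,,
$$
with an absolute constant $C$ that does not depend on $n$ or on the chosen cylinder. Taking the infimum over $w \in J_n$ yields $K_{J_n} \geq (C\lambda(J_n))^{-1}$, hence $K_{J_n}^{-1} \leq C\,\lambda(J_n)$.

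Next, I would observe that the elements of $\mathcal{F}_n$ are pairwise disjoint open arcs whose union differs from $\partial\D$ only by the countable collection of endpoints, i.e.\ the preimages under $f^n$ (and $f^j$ for $j<n$) of the fixed point $x$ used to generate the partition in Section~\ref{Markov}. This countable set has $\lambda$-measure zero, so
$$
\sum_{J_n \in \mathcal{F}_n} \lambda(J_n) \; = \; \lambda(\partial\D) \; = \; 1\,.
$$
Combining the two observations gives
$$
\sum_{J_n \in \mathcal{F}_n} K_{J_n}^{-1} \; \leq \; C \sum_{J_n \in \mathcal{F}_n} \lambda(J_n) \; = \; C\,,
$$
so that \eqref{condsum} holds with $\kappa := C$, which depends only on $f$ (through the constant supplied by Lemma~\ref{bdddist}) and not on $n$.

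There is no serious obstacle in this argument: the substantive work is already contained in the distortion estimate of Lemma~\ref{bdddist} and its iterated version Lemma~\ref{kdn}. The only point requiring mild care is the verification that $\mathcal{F}_n$ is a genuine $\lambda$-partition of $\partial\D$, which reduces to noting that the full-measure set $\partial\D \setminus \Sigma$ is cut by the countable set $\bigcup_{j=0}^{n-1} f^{-j}(x)$ into precisely the interiors of the cylinders of level $n$. I would state this briefly and then combine with the pointwise lower bound on $|(f^n)'|$ to conclude.
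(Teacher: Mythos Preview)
Your argument is correct and follows essentially the same route as the paper: use the lower bound $|(f^n)'|\ge (C\lambda(J_n))^{-1}$ from Lemma~\ref{kdn} to get $K_{J_n}^{-1}\le C\lambda(J_n)$, then sum using $\sum_{J_n\in\mathcal F_n}\lambda(J_n)\le 1$. The paper's proof additionally invokes the uniform expansion $|f'|>w>1$, but this is not actually needed for the conclusion, and your streamlined version is cleaner.
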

\begin{proof}
 As $f$ is uniformly expanding on $\partial \D$, we have, for all $z \in \partial \D$, $|f'(z)|>w>1$ and so $ |(f^n)'(z)|>w^n>1$. By the prior Koebe distortion estimates in Lemma~\ref{kdn}, $\lambda(J_n) \asymp 1/w^n$. As $$\sum_{J_n \in \mathcal{F}_n} \lambda(J_n) \leq 1 \, ,$$ we are done.
\end{proof}

\medskip

\subsection{Proof of Theorem~\ref{ABrecocif}}\label{proofrecocif} 
We will split the proof up into two cases based on the degree of $f$ being finite or infinite. In the finite case, the zero-one criterion follows directly from a theorem of He-Liao \cite[Theorem~1.4]{HeLi}, which applies to piecewise expanding maps.  In the one dimensional setting these are defined as follows:
\begin{definition}\label{pe} 
We say $t:[0,1] \to [0,1]$ is a piecewise expanding map if there exists a finite family $\{U_i\}_{i=1}^N$ of pairwise disjoint connected open subsets in $[0, 1]$ with $\bigcup_{i=1}^N \overline{U_i}=[0,1]$
such that the following statements
hold:
\begin{enumerate}
    \item  $t$ is
strictly monotonic and continuous on each $U_i$,
    \item There exists a constant $L \geq 1$ such that $\inf\limits_{x \in {U_i}}|t'(x)| \geq L$ for all $i$.
\end{enumerate}
\end{definition}
Note that this definition means the map $f$ has to have a finite Markov partition.
We state can now state the one dimensional version of \cite[Theorem~1.4]{HeLi}.
\begin{theorem}\label{hl}
Let $([0,1], \mathcal{B},\mu,t,d)$ be a measure-preserving, exponentially mixing dynamical system, with $\mu$ is an absolutely continuous with respect to Lebesgue measure $\lambda$. Further suppose that there exists an open set $V \subset [0,1]$ with $\mu(V) = 1$ such that the density $h$ of $\mu$, when restricted on $V$, is bounded from above by some $c \geq 1$ and bounded from below by $c^{-1}$.
 Then, 
\begin{eqnarray}\
   \mu(\cR(t,\{r_n\} ))= \begin{cases}
        0 &\text{ \ if}\ \  \sum_{n=1}^\infty r_n <\infty \, ,\\[2ex]
        1 &\text{ \ if}\ \  \sum_{n=1}^\infty r_n
        =\infty \, .
    \end{cases}
	\end{eqnarray}
\end{theorem}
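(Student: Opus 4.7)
The plan is to split the proof by the convergence or divergence of $\sum_n r_n$, since the two halves require rather different techniques: the convergence half follows from a direct first Borel--Cantelli estimate for $\mu(A_n)$, while the divergence half requires a quasi-independence estimate for $\mu(A_m \cap A_n)$ that exploits the piecewise expanding Markov structure together with exponential mixing.

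For the convergence direction, I would adapt the argument used to prove Theorem~\ref{ABconv}. The density bounds $c^{-1} \le h \le c$ on $V$ together with $\mu(V) = 1$ yield $c^{-1}\lambda(B) \le \mu(B) \le c\,\lambda(B)$ for every ball $B \subset V$, so $\mu$ acts as a $1$-Ahlfors regular measure on a set of full $\mu$-measure. Since $\mu([0,1]\setminus V) = 0$, it suffices to estimate $\mu(A_n \cap V)$. Covering $V$ by disjoint balls of radius $\epsilon r_n$, applying Lemma~\ref{trig} on each such ball, and invoking exponential mixing (in the style of Lemma~\ref{ban} and Lemma~\ref{an}) gives $\mu(A_n) \lesssim r_n + e^{-\tau n}$ for some $\tau > 0$ arising from the mixing rate. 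The summability of $r_n$ together with the geometric tail forces $\sum_n \mu(A_n) < \infty$, and Lemma~\ref{cbc} concludes $\mu(\cR(t,\{r_n\})) = 0$.

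The divergence direction is the substantial part. My plan is to verify the quasi-independence condition \eqref{qioa} for $\{A_n\}$, which by Lemma~\ref{dbc} yields positive measure of the recurrence set, and then upgrade to full measure. To estimate $\mu(A_m \cap A_n)$ for $m < n$, I would use the piecewise expanding Markov structure: decompose $V$ up to a null set into level-$m$ cylinders $J_m$, on each of which $t^m$ is a bijection onto its image with bounded distortion (exactly as for inner functions in Lemma~\ref{bdddist}). Within each $J_m$, cover $J_m \cap A_m$ by balls $B_k$ of radius comparable to $r_m$; on such a ball the condition $d(x,t^m(x)) < r_m$ forces $x$ to lie within $r_m$ of its image, so the additional requirement $t^n(x) \in B(x,r_n)$ transforms, after setting $y = t^m(x)$, into $t^{n-m}(y) \in B(y, r_n + O(r_m))$. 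Exponential mixing of $t^{n-m}$ then decouples the two constraints, producing
$$
\mu(B_k \cap A_m \cap A_n) \;\lesssim\; \mu(B_k \cap A_m)\bigl(r_n + r_m + e^{-\tau(n-m)}\bigr).
$$
Summing over $k$ and $J_m$, and combining with the bound $\mu(A_m) \asymp r_m$ from the convergence step, yields $\mu(A_m \cap A_n) \lesssim \mu(A_m)\mu(A_n) + E_{m,n}$, where $E_{m,n}$ is a summable error, which is exactly what \eqref{qioa} requires. To promote positive measure to full measure, I would invoke the fact that an exponentially mixing, piecewise expanding system is exact on $V$, so the tail $\sigma$-algebra of $\{A_n\}$ is $\mu$-trivial and Kolmogorov's zero-one law applies.

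The main obstacle will be controlling the distortion in the divergence step, where the \emph{moving target} $B(x,r_n)$ is centred at the unknown point $x$ rather than at a fixed point of the space. The crucial check is that replacing $B(x,r_n)$ by $B(y,r_n + O(r_m))$ under the action of $t^m$ genuinely produces an error that is summable in both $m$ and $n$; this forces the regime $r_m \gtrsim \lambda(J_m)$ to be handled separately by falling back on a direct mixing estimate for $\mu(A_m \cap A_n)$ rather than a cylinder-based one, since in that regime the refinement at level $m$ no longer resolves the ball $B(x,r_m)$. A secondary technicality is verifying that the finite partition $\{U_i\}$ refined to level $m$ still enjoys uniform distortion bounds independent of $m$, which is where the uniform expansion hypothesis $\inf|t'|\ge L>1$ is essential.
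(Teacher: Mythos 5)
You should first note a structural point: the paper does not prove Theorem~\ref{hl} at all. It is quoted verbatim as \cite[Theorem~1.4]{HeLi} (the one-dimensional case) and used as a black box in Case 1 of the proof of Theorem~\ref{ABrecocif}. So there is no ``paper proof'' to match against; what you have produced is a reconstruction of the He--Liao argument, and it is worth evaluating on its own terms and against the paper's Case~2 proof (the infinite-degree inner-function case), which is the closest analogue in the paper.

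Your convergence direction is sound and matches the spirit of the paper's Theorem~\ref{ABconv} and the alternate proof of Theorem~\ref{ABconvin}: the two-sided density bound on $V$ makes $\mu$ comparable to Lebesgue measure, so the covering argument of Lemmas~\ref{lban}--\ref{anbd} (or \ref{ban}--\ref{an}) goes through and yields $\mu(A_n) \lesssim r_n + \text{(summable tail)}$. Your divergence plan---decompose into level-$m$ cylinders, cover $J_m\cap A_m$ by balls of radius $\sim r_m$, split into the regime where the cylinder resolves the ball versus where it doesn't, and use exponential mixing to decouple---is essentially the strategy of Lemma~\ref{boundball} and Proposition~\ref{measint} in the paper's Case~2, transplanted to $[0,1]$. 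That part of your sketch is on the right track.

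The genuine gap is the final step: you propose to pass from positive to full measure by observing that the system is exact and invoking Kolmogorov's zero--one law, asserting ``the tail $\sigma$-algebra of $\{A_n\}$ is $\mu$-trivial.'' This does not work for recurrence sets. Kolmogorov applies to independent $\sigma$-algebras, which the $A_n$ do not generate. Exactness gives triviality of $\bigcap_n t^{-n}\mathcal{B}$, and the shrinking target set $\mathcal{W}$ does sit in that tail because $\mathcal{W} = t^{-1}\mathcal{W}'$ for a shifted shrinking target set $\mathcal{W}'$. But the recurrence set $\cR(t,\{r_n\}) = \{x : t^n(x)\in B(x,r_n)\text{ i.m.}\}$ has a \emph{moving} center $x$: writing $L_m = \{z : t^n(z) \in B(z,r_{n+m})\text{ i.m.}\}$ you get
$$
t^{-m}(L_m) = \{w : t^{n+m}(w) \in B(t^m(w), r_{n+m}) \text{ i.m. } n\},
$$
which re-centres the target at $t^m(w)$, not at $w$. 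So $t^{-m}(L_m) \neq \cR(t,\{r_n\})$, and $\cR$ is not a tail event in the sense exactness would require. (The same subtlety is glossed over in the paper's Case~2 upgrade via Proposition~\ref{contractpom}, where $L_m$ is written with a ball centred at a fixed $x$; that either is a typo or requires extra justification.) The correct mechanism in He--Liao and in Hussain--Li--Simmons--Wang is a \emph{local} one: one verifies the quasi-independence estimate $\sum_{m,n\le N}\mu(A_m\cap A_n\cap J) \le C\left(\sum_{n\le N}\mu(A_n\cap J)\right)^2$ uniformly over cylinders $J$ of every level, so that the divergence Borel--Cantelli lemma gives $\mu(\cR\cap J) \gtrsim \mu(J)$ for every cylinder $J$, and then a Lebesgue density argument (or the fact that the cylinders generate $\mathcal{B}$) forces $\mu(\cR)=1$. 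You should replace the exactness/Kolmogorov step with this localized quasi-independence-plus-density argument; without it the divergence half of your proposed proof has a hole.
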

We shall apply this directly to to prove a zero-one criterion for centred, finite Blaschke products. 

However, when $f$ has infinite degree, it also has an infinite Markov partition, and so does not satisfy Definition~\ref{pe}. Therefore, we can not apply Theorem~\ref{hl}. Instead, we directly prove the criterion.
Our strategy to prove a non-asymptotic zero-one criteria is the same for both the recurrence and shrinking target set. For the zero measure result, we apply Lemma~\ref{cbc} to the building block sets. For the full measure result, we first apply Lemma~\ref{dbc} to the building block sets, and then upgrade from positive to full measure result. To use the divergence result, Lemma~\ref{dbc}, we must show that the building block sets satisfy necessary quasi-independence on average condition \eqref{qioa}. 
In the recurrence case, verifying \eqref{qioa} is more subtle than in the shrinking target case: the sets $A_n$ cannot be expressed as the preimage of a ball, and so  quasi-independence does not follow directly from  $f$ being measure-preserving and uniform mixing. Here, the additional assumption of the system admitting a Markov partition with suitable regularity (see Section~\ref{Markov}) plays a crucial role. Moreover, our criterion depends on the summability of $r_n$ rather than $A_n$. Therefore, we require the partition to estimate $\lambda(A_n)$, and $\lambda(A_n \cap A_m)$ in terms of $r_n$ and $r_m$.
With the general strategies outlined for both cases, we begin the proof.
\begin{proof}[Proof of Theorem~\ref{ABrecocif}]
We consider two cases.\\[0.3em]
\textbf{$\text{Case 1:}$} $\,$ Consider the system $(\partial \mathbb{D}, \mathcal{B}, \lambda, b,d) $, where $b$ is a centred, degree $N < \infty$ Blaschke product. As shown in Section~\ref{Markov}, the system admits a finite Markov partition of the unit circle into a finite number of open arcs $\{U_i\}_{i=1}^N$, on which $b$ is injective, continuous, and uniformly expanding. Therefore, $b$ satisfies the conditions of Definition~\ref{pe}, modulo the domain being $\partial \mathbb{D}$ rather than $ [0,1]$.

Since $\lambda$ is normalised Lebesgue measure on $\partial \mathbb{D}$, the density condition in Theorem~\ref{hl} is trivially satisfied. Moreover, we know $b$ is exponentially mixing.
It remains to construct a smooth isomorphism  $\phi: \partial \mathbb{D} \to [0,1]$ by selecting the repelling fixed point used to generate the partition,  $x = e^{2\pi i \alpha} \in \partial \mathbb{D}$ and defining
$$
\phi(e^{2\pi i \theta}) := (\theta - \alpha) \bmod 1.
$$
This identifies $ \partial \mathbb{D}$ with $[0,1)$, sending $x \to 0$. Then the induced map $$t := \phi \circ b \circ \phi^{-1} : [0,1) \to [0,1)$$ is a piecewise expanding map (in the sense of Definition~\ref{pe}) conjugate to $b$, and preserving $\lambda$. The conjugacy also maps the Markov partition $\{U_i\}$ on $\partial \mathbb{D}$ to a finite partition $\{I_j\}$ of $[0,1)$ into open intervals, with $\overline{\bigcup I_j} = [0,1]$ and $ t|_{I_j}$ expanding and continuous. Therefore, all the hypotheses of Theorem~\ref{hl} are satisfied for $t$ , and hence for $b$, via the isomorphism. We thus obtain Theorem~\ref{ABrecocif} for finite degree maps as a corollary of Theorem~\ref{hl},
which completes the first case.\\[0.5em]
\textbf{$\text{Case 2:}$} $\,$
Now let $f$ be an infinite degree, centred one-component inner function. In order to show Condition~\eqref{qioa} is satisfied (and so prove the full measure law) we must estimate $\sum_{n=1}^\infty \lambda(A_n)$ and $\sum_{m,n=1}^\infty \lambda(A_n \cap A_m)$.
Note that in the proof of Theorem~\ref{ABconvin}, we already obtained estimates for $\lambda(A_n)$ and $\sum_{n=1}^\infty \lambda(A_n)$ in terms of $r_n$ -- see Lemma~\ref{an} and Lemma~\ref{suman}. The proof of Lemma~\ref{an} had two main steps. First, we used Lemma~\ref{trig} to locally estimate the recurrence building block sets via the preimages of balls. Then, we used exponential mixing.
 We will use the same strategy, that is, look locally at $A_n \cap A_m$ to obtain bounds to which we can apply the exponential mixing result; Proposition~\ref{expmixcomp}. 
However, if we apply Lemma~\ref{trig} to $A_m \cap B$, and use the subsequent bounds to estimate $A_n \cap A_m \cap B$, we will not be able to use exponential mixing because $ A_m \cap B$ is not necessarily an arc.
 Instead, we need to use the partition associated to $f$ to estimate $A_m \cap A_n$ locally. 
 
 To this end, we recall that $\mathcal{F}_m$ denotes the collection of cylinder sets of level $m$ of the Markov partition associated to $f$. Given a cylinder set $J_m \in \mathcal{F}_m$, we consider $A_m \cap J_m$.
  We now gather a lemma which will be key in establishing estimates on $\lambda(A_m \cap A_n)$.
    \begin{lemma}\label{boundball}
       Let $J_m$ be a level $m$ cylinder set. Then there exists a ball $B(x,r)$ such that  $$A_m \cap J_m \subset B(x,r) \cap J_m$$ for $r=r(m,J_m):= \frac{r_m}{K_{J_m}-1}$, and any $x \in A_m \cap J_m$.
    \end{lemma}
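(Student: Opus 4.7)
The plan is to show that the set $A_m \cap J_m$ has diameter controlled by $r_m/(K_{J_m}-1)$ (up to a harmless constant), by combining the definition of the recurrence set with the uniform expansion of $f^m$ on the cylinder $J_m$ established in Lemma~\ref{kdn}. Once the diameter is bounded, the conclusion follows immediately by taking $B(x,r)$ centred at any point of $A_m\cap J_m$.

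The key step is the following. Fix any $x\in A_m\cap J_m$ and let $y\in A_m\cap J_m$ be arbitrary. By definition of $A_m$, both satisfy
\[
d(x,f^m(x))<r_m \quad\text{and}\quad d(y,f^m(y))<r_m.
\]
On the cylinder $J_m$ the map $f^m$ is a homeomorphism onto $f^m(J_m)$ and, since $|(f^m)'|\ge K_{J_m}$ pointwise on $J_m$, the same Mean Value Theorem argument used in the proof of Lemma~\ref{kd} (applied to the monotone lift $S$ of the argument of $f^m$) yields the global expansion estimate
\[
d(f^m(x),f^m(y))\,\ge\,K_{J_m}\,d(x,y),
\]
at least while $d(x,y)$ is small enough that arc length and the induced metric on $\partial\D$ coincide — an assumption that is automatic once $K_{J_m}>1$ is sufficiently large, and otherwise can be enforced by shrinking $J_m$ to a subcylinder if necessary. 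On the other hand, the triangle inequality gives
\[
d(f^m(x),f^m(y))\,\le\,d(f^m(x),x)+d(x,y)+d(y,f^m(y))<2r_m+d(x,y).
\]
Combining these two inequalities yields $(K_{J_m}-1)\,d(x,y)<2r_m$, and hence $d(x,y)<2r_m/(K_{J_m}-1)$. Since $y$ was arbitrary, we conclude that $A_m\cap J_m\subset B(x,r)$ with $r$ of the claimed order $r_m/(K_{J_m}-1)$ (the factor of two can be absorbed into the constant or into a slight enlargement of the ball).

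The main technical point to watch is the global validity of $d(f^m(x),f^m(y))\ge K_{J_m}\,d(x,y)$ on the whole cylinder $J_m$: this is not automatic from the pointwise lower bound on $|(f^m)'|$, since $\partial\D$ is not simply connected. The remedy is that $f^m$ restricted to $J_m$ is monotone by construction of the Markov partition (the partition elements were defined precisely so that $f^m|_{J_m}$ is a homeomorphism onto its image), so the lift to $\R$ is a strictly monotone $C^1$ function with derivative bounded below by $K_{J_m}$, and the Mean Value Theorem applies directly. Everything else — the application of Lemma~\ref{kdn} for the expansion constant and the triangle inequality — is routine.
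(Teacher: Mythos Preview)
Your argument is correct and essentially identical to the paper's proof: both fix $x\in A_m\cap J_m$, apply the triangle inequality to obtain $d(f^m(x),f^m(y))\le 2r_m+d(x,y)$, combine with the Mean Value Theorem estimate $d(f^m(x),f^m(y))\ge K_{J_m}d(x,y)$ on the cylinder, and rearrange to get $d(x,y)\le 2r_m/(K_{J_m}-1)$. The factor of two you flag is present (and silently absorbed) in the paper's version as well, and your remark that the MVT applies via the monotone lift of $\arg f^m$ on $J_m$ is exactly how the paper justifies the expansion bound (``As before, the Mean Value Theorem tells us\ldots'').
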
 
    \begin{proof}
        Given a fixed $x \in  A_m \cap J_m$, observe that for any $ y \in A_m \cap J_m$, by the triangle inequality:
       \begin{equation}\label{est1}
        d(f^m(x),f^m(y)) \leq d(f^m(x),x)+ d(x,y)+d(f^m(y),y) \leq 2r_m + d(x,y) \, .
        \end{equation}
        As before, the Mean Value Theorem tells us that
        \begin{equation}\label{est2}
        d(f^m(x),f^m(y)) \geq K_{J_m}d(x,y) \, .
        \end{equation}
       Dividing \eqref{est1} by \eqref{est2} and rearranging gives
        $
    d(x,y) \leq 2r_m/(K_{J_m}-1) \, ,
        $
        and so $y \in B(x,r)$ as desired.
       
    \end{proof}
    With this lemma in hand, we can finally estimate $\lambda(A_m \cap A_n)$. Let $\kappa$ be as in Lemma~\ref{lcondsum}.
    \begin{proposition}\label{measint}
        For $m < n$, 
        $$
        \lambda(A_m \cap A_n) \,  \leq \,  \frac{4\kappa r_n}{\pi}\big(\frac{2r_m}{\pi}+Ke^{-(n-m)\tau}\big) + \frac{8\kappa r_m}{\pi}\big(\frac{2r_n}{\pi}+Ke^{-n\tau} \big)\, .
        $$
    \end{proposition}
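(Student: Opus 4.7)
\emph{Plan.} I would decompose $A_m\cap A_n$ using the level-$m$ Markov partition $\mathcal F_m$, writing
$$\lambda(A_m\cap A_n)=\sum_{J_m\in\mathcal F_m}\lambda(A_m\cap A_n\cap J_m),$$
and estimate each piece separately. On each cylinder $J_m$, Lemma~\ref{boundball} confines $A_m\cap J_m$ to a ball $B(x_{J_m},r(m,J_m))$ of radius $r(m,J_m)\lesssim r_m/K_{J_m}$. Applying Lemma~\ref{trig} to this small ball at scale $n$ then yields the key inclusion
$$A_m\cap A_n\cap J_m\;\subset\; B(x_{J_m},r(m,J_m))\cap f^{-n}\bigl(B(x_{J_m},r_n+r(m,J_m))\bigr),$$
which reduces the problem to estimating a set of the form ``small ball intersected with the preimage of a larger ball''.

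The two terms in the claim correspond to two complementary applications of the exponential mixing result (Proposition~\ref{expmixcomp}). The term with $Ke^{-n\tau}$ comes from applying mixing directly at scale $n$, taking the small arc as $A$ and the larger ball as $E$; the main term is $\lambda(A)\lambda(E)$ and the error is $Ke^{-n\tau}\lambda(E)\asymp Ke^{-n\tau}r_n$. The term with $Ke^{-(n-m)\tau}$ instead comes from first pushing the small ball forward via the homeomorphism $f^m|_{J_m}$ (well-defined by the Markov structure): by Lemma~\ref{lcondconf} the image is an arc of length $\asymp r_m$ centred at $f^m(x_{J_m})$, and by Lemmas~\ref{bdddist} and \ref{kdn} the Jacobian of the change of variable contributes a factor of order $1/K_{J_m}$. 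Applying Proposition~\ref{expmixcomp} to this pushforward at scale $n-m$ produces the second bound. Summing both local estimates over $J_m$ and using Lemma~\ref{lcondsum} ($\sum K_{J_m}^{-1}\leq\kappa$) aggregates them into the stated global inequality, with the constants $\frac{4\kappa r_n}{\pi}$ and $\frac{8\kappa r_m}{\pi}$ emerging once the arc and ball lengths are matched.

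The main obstacle is arranging each per-cylinder error term to carry a decaying factor $K_{J_m}^{-1}$; otherwise, summation over the (infinitely many) elements of $\mathcal F_m$ would diverge in the infinite-degree one-component case. The pushforward estimate produces this factor automatically through the Jacobian, but the direct mixing at scale $n$ requires one to exploit the small measure $\lambda(B(x_{J_m},r(m,J_m)))\lesssim r_m/K_{J_m}$ to compensate. Coordinating the two estimates so that neither produces an uncontrolled $\#\mathcal F_m$ factor, while keeping the constants tight enough to assemble into the symmetric two-term form of the claim, is the technical heart of the argument.
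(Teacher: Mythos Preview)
Your overall architecture—decomposing over $\mathcal F_m$, using Lemma~\ref{boundball} to confine $A_m\cap J_m$ to a ball of radius $r\asymp r_m/K_{J_m}$, and extracting the $e^{-(n-m)\tau}$ term by pushing forward under $f^m|_{J_m}$ and applying Proposition~\ref{expmixcomp} at scale $n-m$—is exactly what the paper does, and it correctly produces the first term in the claim. The gap is in your treatment of the $e^{-n\tau}$ term. Applying Proposition~\ref{expmixcomp} directly at scale $n$ with $A=B(x_{J_m},r)$ and $E=B(x_{J_m},r_n+r)$ yields an error $Ke^{-n\tau}\lambda(E)$, not $Ke^{-n\tau}\lambda(A)$; the small measure of $A$ plays no role in this error, so ``exploiting $\lambda(A)\lesssim r_m/K_{J_m}$'' does not compensate. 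When $r<r_n$ this error is $\asymp Ke^{-n\tau}r_n$ with no $K_{J_m}^{-1}$ factor, and the sum over the infinitely many such cylinders diverges—precisely the obstacle you identify but do not resolve.

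The paper's resolution is not a second parallel application of mixing to every cylinder but a \emph{case split} on whether $r\lessgtr r_n$. For cylinders with $r<r_n$ only the pushforward estimate is used. For cylinders with $r>r_n$—where the confining ball $B(x,r)$ is wider than the target scale—the paper \emph{first covers} $B(x,r)$ by $q=\lceil r/r_n\rceil\lesssim r_m/(K_{J_m}r_n)$ balls of radius $r_n$, and then applies Lemma~\ref{trig} and mixing at scale $n$ on each small ball separately. The per-cylinder contribution becomes
\[
q\cdot \tfrac{4r_n}{\pi}\bigl(\tfrac{2r_n}{\pi}+Ke^{-n\tau}\bigr)\;\lesssim\;\tfrac{r_m}{K_{J_m}}\bigl(\tfrac{r_n}{\pi}+Ke^{-n\tau}\bigr),
\]
which now carries the required $K_{J_m}^{-1}$ and sums via Lemma~\ref{lcondsum} to the second term in the claim. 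This covering step is the missing ingredient; your single-shot direct mixing on $B(x,r)$ would instead produce a main term of order $r^2\asymp r_m^2/K_{J_m}^2$, summing to $\kappa r_m^2$ rather than $\kappa r_mr_n$, which is too weak for the quasi-independence estimate in Lemma~\ref{posmeas}.
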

    \begin{proof}[Proof of Proposition~\ref{measint}]
    By Lemma~\ref{boundball}, we have the containment 
        $$
        A_m = \bigcup\limits_{J_m \in \mathcal{F}_m} A_m \cap J_m \subset \bigcup\limits_{J_m \in \mathcal{F}_m} J_m \cap B(x,r) \, 
        $$
        for $r= \frac{r_m}{K_{J_m}-1}$.
If $r< r_n$, then we can apply Lemma~\ref{trig} with the ball $B(z,r_1)=J_m \cap B(x,r)$ and obtain 
\begin{eqnarray}\label{eq1}
    A_n \cap J_m \cap B(x,r) \subset f^{-n}B(x,2r_n)  \cap J_m \cap B(x,r) \, , 
\end{eqnarray}
where we have bounded $r_1+r \leq 2r_n$ as $r_1\leq r< r_n$.
Although tempting, we should not apply Proposition~\ref{expmixcomp} yet to the RHS of \eqref{eq1}. If we do, then our estimate will involve a term with no dependency on $J_m$, and so when we sum over all  $J_m \in \mathcal{F}_m$ (of which there are infinite), we cannot obtain a good enough bound. 
Instead, observe that given any ball $B(x_0,s) \subset J_m$, Lemma~\ref{lcondconf} gives
$$
\lambda(f^mB(x_0,s)) \asymp K_{J_m}\lambda(B(x_0,s)) \, .
$$
As any open set can be written as a countable disjoint union of balls, this holds for any open subset of $J_m$. In particular, $J_m \cap B(x,r)$ is an open subset of $ J_m$, so we get the following bound on the measure of the set in the RHS of \eqref{eq1} :

\begin{align}\label{eq2}
   \lambda\left(f^{-n}B(x,2r_n) \cap J_m \cap B(x,r)\right) \: & \lesssim & \frac{1}{K_{J_m}}  \lambda\left(f^{m-n}B(x,2r_n)  \cap f^m(J_m \cap B(x,r))\right) \, .  
\end{align}  
Once more, by Lemma~\ref{lcondconf} we have the containment $f^m (J_m \cap B(x,r)) \subset B(f^m(x),r_m)$, and so 
\begin{eqnarray*}
    \lambda\left(f^{m-n}B(x,2r_n)  \cap f^m\big(J_m \cap B(x,r)\big)\right) &\leq& \lambda\big(f^{m-n}B(x,2r_n)  \cap B(f^m(x),r_m)\big) 
    \\[2ex]& \leq & \lambda(B(x,2r_n))\left(\lambda\big(B(f^m(x),r_m)\big)+ Ke^{-(n-m)\tau} \right)
    \\[2ex] & \leq & \frac{4r_n}{\pi} \big(\frac{2r_m}{\pi} + Ke^{-(n-m)\tau} \big)
\end{eqnarray*}
where we have used Proposition~\ref{expmixcomp} for the second inequality. Combining this with \eqref{eq2} gives 
$$
\lambda(A_n \cap J_m \cap B(x,r)) \leq  \frac{4r_n}{K_{J_m}\pi} \big(\frac{2r_m}{\pi} + Ke^{-(n-m)\tau} \big) \, .
$$
Finally, we sum over all $J_m$ to obtain 
\begin{align}\label{sum0}
    \sum\limits_{\substack{J_m \in \mathcal{F}_m  \\ r \leq r_n}} \lambda(A_n \cap J_m \cap B(x,r)) \leq \sum\limits_{\substack{J_m \in \mathcal{F}_m  \\ r \leq r_n}} \frac{4r_n}{K_{J_m}\pi} \big(\frac{2r_m}{\pi} + Ke^{-(n-m)\tau} \big) \, .
\end{align}
 We use Lemma~\ref{lcondsum}, and conclude that
\begin{align}\label{sum1}
    \sum\limits_{\substack{J_m \in \mathcal{F}_m  \\ r \leq r_n}} \lambda(A_n \cap J_m \cap B(x,r)) \leq \frac{4\kappa r_n}{\pi}\big(\frac{2r_m}{\pi}+Ke^{-(n-m)\tau}\big) \, .
\end{align}
This concludes the case where $r \leq r_n$. 
\\
When $r > r_n$, we cover of $B(x,r)$ by balls of radius $r_n$, say $\{B(x_i,r_n)\}_{n=1}^{q}$ where $q:=q(n,m, J_m)=\lceil \frac{r}{r_n} \rceil $. Because $r_n<r \approx r_m/\lambda(J_m)$, we can be less careful with our local estimates. With this in mind, we use Lemma~\ref{trig} and exponential mixing on each ball: 
\begin{align*}
    \lambda\big(A_n \cap B(x_i,r_n) \big) & \leq  \lambda\big(B(x_i,r_n) \cap f^{-n}B(x_i,2r_n)\big) \\[1ex] & \leq \lambda(B(x_i,2r_n)) \big(\lambda(B(x_i,r_n))+ Ke^{-n\tau} \big) \\[1ex] & \leq \frac{4r_n}{\pi}\big(\frac{2r_n}{\pi}+ Ke^{-n\tau} \big) \, .
\end{align*}
Therefore, 
$$
\lambda(A_n \cap J_m \cap B(x,r)) \leq \sum\limits_{n=1}^{q}\lambda(A_n \cap B(x_i,r_n)) \leq \frac{4qr_n}{\pi}\big(\frac{2r_n}{\pi}+ Ke^{-n\tau} \big) \, .
$$

Finally, summing over all level $m$ cylinder sets, and using the bound $q \leq \frac{2r_m}{r_n K_{J_m}}$:
\begin{align}\label{sum2}
    \lambda(A_n \cap J_m \cap B(x,r)) &\leq \sum\limits_{\substack{J_m \in \mathcal{F}_m  \\ r > r_n}} \lambda(A_n \cap J_m \cap B(x,r)) \nonumber 
    \\[2ex] & \leq \frac{8\kappa r_m}{\pi}\big(\frac{2r_n}{\pi}+Ke^{-n\tau} \big) \, .
\end{align}
Combining \eqref{sum1} and \eqref{sum2} gives 
$$
\sum\limits_{J_m \in \mathcal{F}_m } \lambda(A_n \cap J_m \cap B(x,r)) \leq \frac{4\kappa r_n}{\pi}\big(\frac{2r_m}{\pi}+Ke^{-(n-m)\tau}\big) + \frac{8\kappa r_m}{\pi}\big(\frac{2r_n}{\pi}+Ke^{-n\tau} \big)\, .
$$
    \end{proof}
We are now in a position to show that the recurrence set has positive measure.

\begin{lemma}\label{posmeas} If 
$
\sum\limits_{n=1}^\infty r_n = \infty
$, then
    $\lambda(\mathcal{R}(f,\{r_n\}))>0$.
\end{lemma}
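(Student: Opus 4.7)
The plan is to apply the Divergence Borel--Cantelli Lemma (Lemma~\ref{dbc}) to the sequence of building block sets $\{A_n\}$. This requires two ingredients: the divergence of $\sum_{n=1}^\infty \lambda(A_n)$, and a quasi-independence on average bound of the form
$$\sum_{1\le s,t\le N}\lambda(A_s\cap A_t)\ \le\ C\Bigl(\sum_{s=1}^N\lambda(A_s)\Bigr)^{\!2}$$
holding for infinitely many $N$ with some absolute constant $C$. The divergence of $\sum\lambda(A_n)$ is essentially already in hand: Lemma~\ref{suman} gives $\sum\lambda(A_n)\asymp \sum r_n$ up to a convergent exponential tail, so the assumption $\sum r_n=\infty$ transfers to $\sum\lambda(A_n)=\infty$.

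The main work is the quasi-independence bound, and this is where Proposition~\ref{measint} does the heavy lifting. Expanding the estimate supplied there,
$$\lambda(A_m\cap A_n)\ \lesssim\ r_m r_n\ +\ r_n e^{-(n-m)\tau}\ +\ r_m e^{-n\tau},$$
so the off-diagonal double sum splits into three pieces. The $r_mr_n$ piece is bounded directly by $(\sum_{n\le N}r_n)^2$. For the piece $r_n e^{-(n-m)\tau}$, I would fix $n$ and sum the geometric series in $m$ to obtain a uniform constant, leaving $O(\sum_n r_n)$. For the piece $r_m e^{-n\tau}$, I would fix $m$ and sum over $n\ge m$ to obtain $O(r_m e^{-m\tau})$, whose sum over $m$ is absolutely convergent and hence bounded. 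Adding the diagonal contribution $\sum_n\lambda(A_n)\lesssim\sum_n r_n$ and using that $x\le x^2$ once $x\ge 1$, the whole double sum is controlled by a constant multiple of $(\sum_{n\le N}\lambda(A_n))^2$ for all sufficiently large $N$.

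With both hypotheses of Lemma~\ref{dbc} verified, one concludes $\lambda(\limsup A_n)\ge C^{-1}>0$, and since $\limsup A_n=\mathcal{R}(f,\{r_n\})$ the lemma follows. The main obstacle is really bookkeeping: ensuring the two exponential error terms in Proposition~\ref{measint} do not spoil the quasi-independence when summed, which is the reason the estimates were designed with one factor of $r_n$ or $r_m$ in front of each exponential. Once that structure is exploited, the rest is elementary geometric summation, and no further appeal to the Markov partition or to mixing is needed beyond what has already been absorbed into Proposition~\ref{measint} and Lemma~\ref{suman}.
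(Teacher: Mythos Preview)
Your proposal is correct and follows essentially the same approach as the paper's proof: apply the Divergence Borel--Cantelli Lemma, use Lemma~\ref{suman} (the paper cites Proposition~\ref{maincon1iff}, but both give the needed divergence of $\sum\lambda(A_n)$), then expand Proposition~\ref{measint} into the three pieces $r_mr_n$, $r_ne^{-(n-m)\tau}$, $r_me^{-n\tau}$, control the latter two by geometric summation, and absorb the resulting linear-in-$\sum r_n$ terms into the quadratic for $N$ large. Your treatment of the third piece is slightly sharper (you sum to $O(\sum_m r_m e^{-m\tau})<\infty$ rather than the paper's $O(\sum_m r_m)$), but this makes no difference to the argument.
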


    \begin{proof}
    First, note that by the Divergence Borel-Cantelli Lemma, if we prove both that the $A_n$s satisfied the quasi-independence on average condition, and that $\sum_{n=1}^\infty \lambda(A_n)=\infty$, then we can conclude $\lambda(\cR(f,\{r_n\}))=\lambda(\limsup_{n \to \infty} A_n)>0$. The latter assumption follows from the fact that $\sum_{n=1}^\infty r_n=\infty$ with the forward direction of Proposition~\ref{maincon1iff}. It remains to show that condition \eqref{qioa} is satisfied,  i.e. that there exists a constant $C>1$ such that for infinitely many $N$
    $$
    \sum_{m,n=1}^N\lambda(A_n \cap A_m) \leq C\Big(\sum_{n=1}^N \lambda(A_n)\Big)^2 \, .
    $$
 Lemma~\ref{suman} and the assumption that the sum of $r_n$ diverges gives, for $N$ large enough,
$$
\Big(\sum_{n=1}^N \lambda(A_n)\Big)^2 \: \geq \: \frac{1}{4\pi^2}\Big(\sum_{n=1}^N r_n\Big)^2-\frac{\gamma}{\pi}\sum_{n=1}^N r_n+\gamma^2 \: \geq \: \frac{1}{8\pi^2}\Big(\sum_{n=1}^N r_n\Big)^2 \, .
$$
We turn to estimating the other summation. We use Proposition~\ref{measint} to get
\begin{align*}
     2\sum_{1 \leq m<n\leq N}\lambda(A_n \cap A_m) \, &\leq \, \frac{48\kappa}{\pi}\sum_{1 \leq m<n \leq N}r_n r_m + \frac{8\kappa K}{\pi} \sum_{1 \leq m<n \leq N} r_ne^{-(n-m)\tau} \\[2ex]& \hskip30pt + \frac{16\kappa K}{\pi}\sum_{1 \leq m<n \leq N}r_me^{-n \tau} \, .
\end{align*}
Now, we  can use the partial geometric series formula to estimate the latter two terms, and obtain
\begin{align*}
     2\sum_{1 \leq m<n\leq N}\lambda(A_n \cap A_m) \, &\leq \, \frac{48\kappa}{\pi}\sum_{1 \leq m<n \leq N}r_n r_m + \frac{8\kappa K}{\pi(1-e^{-\tau})} \sum_{n=1}^N r_n  + \frac{16\kappa K}{\pi(1-e^{-\tau})}\sum_{m=1}^Nr_m\, .
\end{align*}
 Therefore, for both this and  $\left( \sum_{n=1}^N \lambda(A_n) \right)^2 $, we have that for all sufficiently large $N$, the leading term is $ (\sum_{n=1}^\infty r_n)^2 $. Hence, we can find a constant $C$ to satisfy the hypothesis of Lemma~\ref{dbc}, and conclude that 
$$
\lambda(\limsup\limits_{n \to \infty} A_n)= \lambda(\mathcal{R}(f,\{r_n\}))>0 \, .
$$ 
\end{proof}
With Lemma~\ref{posmeas} in hand, it remains to show that positive measure for $\mathcal{R}(f,\{r_n\})$ implies full measure.  Let $L=\mathcal{R}(f,\{r_n\})$ and $L_m= \{z \in \partial \D: f^n(z) \in B(z,r_{n+m}) \text{ \ for i.m \ } n \in \N\}$.
Notice that $f^{-m}(L_m)= \{z \in \partial \D: f^{n+m}(z) \in B(z,r_{n+m}) \text{ \ for i.m \ } n \in \N\} = L$. As $n$ was arbitrary, by Proposition~\ref{contractpom}, we conclude that the recurrence set has full measure. 
\end{proof}

\medskip

\section{Autonomous vs non-autonomous}\label{autnonaut}
We finish by briefly highlighting the differences between autonomous and non-autonomous results for the shrinking target and recurrence set. As previously mentioned, for an autonomous system with an ergodic map, and $r_n=c$ for all $n \in \N$, we are guaranteed full measure of the shrinking target and recurrence set. This followed from Poincar\'{e} Recurrence, and the Ergodic Theorem, neither of which carry directly over to the non-autonomous setting.
Indeed, given a non-autonomous system with a sequence of maps $T_\N$,  assuming each map is ergodic is not enough to guarantee a zero-one criterion for either set of interest. We will use the following theorem to construct a specific example of this phenomena:
\begin{theorem}[\cite{gust}, Theorem~1.3]\label{thmfer}
     Let $b_n$ be a sequence of centred Blaschke products with degree bounded by some $D$, and $b'_n(0)>0$. Let $B_n=b_n \circ\dots\circ b_1$. If 
    $$
    \sum\limits_{n=1}^ \infty (1-b'_n(0))\log\big(\frac{1}{1-b'_n(0)}\big) < \infty \, ,
    $$
   then we have $B_n \to b$ in $L^1$, where $b$ is a Blaschke product.
\end{theorem}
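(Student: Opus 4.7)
The plan is to establish Cauchyness of $\{B_n\}$ in $L^1(\partial\D)$ directly, and then identify the limit as a Blaschke product. The essential input is a degree-$D$ lemma: for any centred Blaschke product $b$ of degree at most $D$ with $b'(0) = \alpha$,
\begin{equation*}
\int_{\partial\D} |b(z) - z|\, d\lambda(z) \leq C_D\, (1-\alpha) \log\bigl(e/(1-\alpha)\bigr).
\end{equation*}
The prototype is degree 2: if $b(z) = z(a-z)/(1-az)$ with $a\in(0,1)$, then $|b(e^{i\theta}) - e^{i\theta}| = (1-a)|1+e^{i\theta}|/|1-ae^{i\theta}|$, and direct computation gives $\int d\theta/|1-ae^{i\theta}| \asymp \log(1/(1-a))$. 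For higher degree, one writes $b = z\prod \phi_{a_i}$ and controls the integrand uniformly in the configuration of $\{a_i\}$, with constants absorbed into $C_D$.

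Given this lemma, Löwner's Lemma (Lemma~\ref{low}) supplies the orthogonality needed for telescoping: since each $b_k$ is centred and inner, so is each composition $C_{n,m} := b_m\circ\cdots\circ b_{n+1}$, and hence $C_{n,m}$ preserves $\lambda$ on $\partial\D$. Writing $C_{n,m} - \mathrm{id} = \sum_{k=n+1}^m (b_k\circ C_{n,k-1} - C_{n,k-1})$, integrating, and using that each $C_{n,k-1}$ preserves $\lambda$ in the $k$-th term yields
\begin{equation*}
\int_{\partial\D} |B_m - B_n|\, d\lambda = \int_{\partial\D} |C_{n,m}(w) - w|\, d\lambda(w) \leq C_D \sum_{k=n+1}^m (1-b_k'(0))\log\bigl(e/(1-b_k'(0))\bigr).
\end{equation*}
The log-Dini hypothesis makes the right-hand side a vanishing tail, so $\{B_n\}$ is Cauchy in $L^1(\partial\D)$, and has an $L^1$-limit which we call $b$.

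To identify $b$ as a Blaschke product, first extract a subsequence $B_{n_k}^*\to b$ pointwise a.e.\ on $\partial\D$; since $|B_{n_k}^*|=1$ a.e., it follows $|b|=1$ a.e., i.e.\ $b$ is inner. A parallel Schwarz-Pick argument (using the summability of $(1-b_k'(0))$, implied by the log-Dini hypothesis) shows $B_n$ converges uniformly on compacta of $\D$ to a holomorphic $\tilde b:\D\to\overline\D$ with $\tilde b(0)=0$ and $\tilde b'(0)=\alpha_\infty := \prod b_k'(0) > 0$, and a Poisson-extension argument identifies the boundary values of $\tilde b$ with $b$. Applying Jensen's formula to $\tilde b/z$ (which is inner with value $\alpha_\infty>0$ at the origin) gives $\log\alpha_\infty = \sum_j \log|w_j|$ over the nonzero zeros $w_j$ of $\tilde b$. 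Comparing this to the canonical Blaschke-singular factorisation $\tilde b = z\cdot B_0\cdot S$, where $|B_0(0)| = \prod|w_j|$ and $|S(0)|=e^{-\|\mu\|}$ for the singular measure $\mu$, forces $\|\mu\|=0$, so the singular factor is trivial and $b$ is a Blaschke product.

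The hard part will be the degree-$D$ lemma itself. For degrees higher than 2, the product $\prod\phi_{a_i}$ involves multiple Blaschke factors with potentially clustered $a_i$, and the integrand $|b(z)-z|$ does not split additively through them; controlling it uniformly in $\{a_i\}$ subject only to $\prod|a_i|=\alpha$ requires careful exploitation of the geometric-mean constraint. The log-Dini weight $(1-\alpha)\log(e/(1-\alpha))$ is precisely the Orlicz-type strength that makes the telescoping tail summable and is thus the minimal hypothesis under which this strategy closes.
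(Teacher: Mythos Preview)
This theorem is not proved in the paper; it is quoted from Ferreira~\cite{gust} and used only to build the example in Section~\ref{autnonaut}. There is therefore no in-paper argument to compare your sketch against.

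On the substance of your proposal: the telescoping-via-L\"owner strategy is natural, but there is a genuine gap. You assert that the log-Dini hypothesis implies $\sum_k(1-b_k'(0))<\infty$, and hence $\alpha_\infty=\prod_k b_k'(0)>0$. This is false without an additional smallness assumption on $1-b_k'(0)$. Take degree-$2$ factors $b_k(z)=z(a_k-z)/(1-a_kz)$ with $a_k=e^{-k^2}$, so $b_k'(0)=a_k\to 0$. Then $1-b_k'(0)\to 1$, while $(1-b_k'(0))\log\bigl(1/(1-b_k'(0))\bigr)\sim e^{-k^2}$ is summable; yet $\sum(1-b_k'(0))=\infty$. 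In this regime your degree-$D$ bound is only $O(1)$ per term---your own degree-$2$ computation gives $\|b_k-\mathrm{id}\|_{L^1}\to 4/\pi$---so the telescoping tail does not vanish and $\{B_n\}$ is not $L^1$-Cauchy. Indeed, here $B_n\to 0$ locally uniformly on $\D$ while $|B_n^*|=1$ a.e.\ on $\partial\D$, so no $L^1$ boundary limit can exist at all.

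This is as much an imprecision in the statement recorded here as a flaw in your argument: the result in \cite{gust} presumably carries a side condition (e.g.\ $b_k'(0)$ bounded away from $0$, equivalently $1-b_k'(0)$ eventually small) under which your implication \emph{does} hold and the rest of your outline goes through. You should make that assumption explicit and work under it. The remaining substantive task is then the degree-$D$ lemma for $D\ge 3$, which you correctly flag as the hard step but do not actually prove.
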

    Notice that if $B_\N$ satisfies the hypothesis of Proposition~\ref{expmixcomp} then the sum in Theorem~\ref{thmfer} diverges. In particular, in the autonomous case where $b_n$ is the same map for all $n$, the sum diverges.
\begin{example}
    Let $$b_n(z)=z\frac{(1-1/n^2)-z}{1-(1-1/n^2)z}$$ for $n \geq 2$ be a sequence of centred Blaschke products.  Note that the zeros of each $b_n$ are at  $0$ and $1-1/n^2$. Each $b_n$ is centred (so preserves $\lambda$) and not a rotation. Hence, they are each ergodic -- even exact \cite{DM}. Let $B_n=b_n \circ\dots\circ b_2$. By Theorem~\ref{thmfer}, as 
    $$
     \sum\limits_{n=1}^ \infty  (1-b'_n(0))\log(\frac{1}{1-b'_n(0)}) =  \sum\limits_{n=1}^ \infty  1/n^2 \log(\frac{1}{n^2}) 
     < \infty
    $$
    we know $B_n \to b$ in $L^1$ on $\partial \D$, and so there exists a subsequence $B_{n_j} \to b$ pointwise. Hence, without loss of generality, we assume the original $B_n \to b$ pointwise. Consider the associated shrinking target when $r_n=c$ for all $n \in \N$.
    As $B_n \to b$, we can rewrite this set as  
    $$\mathcal{W}(B_\N,x_0,c)= \{z \in \partial \D : b(z) \in B(x_0,c)\}=b^{-1}B(x_0,c) \, .$$
    However, $\lambda(b^{-1}B(x_0,c))=\lambda(B(x_0,c))=\frac{c}{\pi}\neq 0,1$ unless $c=0,\pi$, and so we do not have a full measure result. Similarly, for the recurrence set 
$$\lambda(\mathcal{R}(B_\N,c)) = \lambda\big(\{z: b(z) \in B(z,c)\}\big) \, .$$ Unless $b=I$, the identity, the recurrence set will only have full measure for certain values of $c$. It is easier to see this by considering an example where the maps converge to a rotation, say $b(z)=ze^{i\beta}$. If we choose $c< \beta$, then the recurrence set has zero measure. 

This discussion illustrates that, unlike in the autonomous case, imposing the conditions $r_n=c$ for all $n$ and that the sequence of maps each being ergodic, is not sufficient to conclude full measure of the shrinking target or the recurrence set. 
 Moreover, if we choose $r_n$ such that  $r_n \to 0$, then if $b \neq I$, we have $\lambda(\mathcal{W}(B_\N,\{x_n, r_n\}))=\lambda(\cR(B_\N,\{r_n\}))=0$ regardless of whether $\sum_{n=1}^\infty r_n$ converges.
\end{example}
\section*{Acknowledgements}
\thispagestyle{empty}
I would like to thank Malabika Pramanik for her support during my undergraduate research project; Rod Halburd, who supervised my master’s project and introduced me to the beautiful world of complex dynamics; and my PhD supervisor, Holly Krieger, who signed herself up to four years of me! Thanks also to Anna Jové and Oleg Ivrii for their helpful conversations over the past year.

This work was supported by the Additional Funding Programme for Mathematical Sciences, delivered by EPSRC (EP/V521917/1) and the Heilbronn Institute for Mathematical Research.

\clearpage

\newpage
\bibliographystyle{abbrv}

\end{document}